\documentclass[10pt,dvipsnames]{amsart}

\usepackage[utf8]{inputenc}
\usepackage{lmodern}
\usepackage[T1]{fontenc}
\usepackage[english]{babel}

\usepackage{latexsym}
\usepackage{amsmath,amssymb,amsthm}
\usepackage{mathrsfs}
\usepackage{stmaryrd}

\usepackage{enumerate}

\usepackage{rotating}

\usepackage{todonotes}
\usepackage{tikz-cd}
\usetikzlibrary{shapes}
\usepackage{tikz}

\usepackage{xcolor}
\usepackage{color}
\usepackage{hyperref}
\hypersetup{
	colorlinks=true,
	linkcolor=blue,
	citecolor=blue,
	urlcolor=blue
}

\usepackage{ulem}
\usepackage{soul}

\newtheorem{theoremAlph}{Theorem}

\newtheorem{theorem}{Theorem}[section]
\newtheorem{lemma}[theorem]{Lemma}	
\newtheorem{proposition}[theorem]{Proposition}
\newtheorem{corollary}[theorem]{Corollary}
\theoremstyle{definition}
\newtheorem{definition}[theorem]{Definition} 
\newtheorem{remark}[theorem]{Remark}

\theoremstyle{definition} 
\newtheorem*{ack}{Acknowledgements}

\numberwithin{equation}{section}

\newcommand{\C}{\mathbb{C}} 
\newcommand{\R}{\mathbb{R}} 
\newcommand{\Q}{\mathbb{Q}} 
\newcommand{\Z}{\mathbb{Z}} 
\newcommand{\N}{\mathbb{N}} 
\newcommand{\Quat}{\mathbb{H}}

\newcommand{\II}{\mathrm{I\!I}}
\newcommand{\Ric}{\textup{Ric}}

\newcommand{\ttimes}{\mathrel{\widetilde{\times} }}

\DeclareMathOperator{\im}{im}
\DeclareMathOperator{\intr}{Int}
\newcommand{\bq}{/ \hspace{-.12cm} /}

\makeatletter
\DeclareRobustCommand*\uell{\mathpalette\@uell\relax}
\newcommand*\@uell[2]{
	\setbox0=\hbox{$#1\ell$}
	\setbox1=\hbox{\rotatebox{10}{$#1\ell$}}
	\dimen0=\wd0 \advance\dimen0 by -\wd1 \divide\dimen0 by 2
	\mathord{\lower 0.1ex \hbox{\kern\dimen0\unhbox1\kern\dimen0}}
}

\begin{document}
	\title[\rmfamily Generalized Surgery and Positive Ricci Curvature]{\rmfamily Generalized Surgery on Riemannian Manifolds of Positive Ricci Curvature}
	\date{\today}
	\subjclass[2010]{53C20}
	\keywords{Positive Ricci Curvature, Surgery, Plumbing, 6-Manifolds}
	\author{Philipp Reiser}
	\address{Institut f\"ur Algebra und Geometrie\\Karlsruher Institut f\"ur Technologie (KIT)\\Germany.}
	\curraddr{Department of Mathematics, University of Fribourg, Switzerland}
	\email{\href{mailto:philipp.reiser@unifr.ch}{philipp.reiser@unifr.ch}}
	\thanks{The author acknowledges funding by the Deutsche Forschungsgemeinschaft (DFG, German Research Foundation) -- 281869850 (RTG 2229) and grant GA 2050 2-1 within the SPP 2026 ``Geometry at Infinity''.}
	
	\normalem
	
	\begin{abstract}
		The surgery theorem of Wraith states that positive Ricci curvature is preserved under surgery if certain metric and dimensional conditions are satisfied. We generalize this theorem as follows: instead of attaching a product of a sphere and a disc, we glue a sphere bundle over a manifold with a so-called \emph{core metric}, a type of metric which was recently introduced by Burdick to construct metrics of positive Ricci curvature on connected sums. As applications we extend a result of Burdick on the existence of core metrics on certain sphere bundles and obtain new examples of 6-manifolds with metrics of positive Ricci curvature.
	\end{abstract}

	\maketitle

	\section{Introduction and Main Results}
	
	By the well-known surgery theorem of Gromov--Lawson \cite{GL80a} and Schoen--Yau \cite{SY79}, positive scalar curvature is preserved under surgeries of codimension at least 3. This result turned out to be a powerful tool to construct metrics of positive scalar curvature and, together with index theory of Dirac operators, led to a complete classification of closed, simply-connected manifolds with a metric of positive scalar curvature by Stolz \cite{St92}.
	
	So far no obstruction is known for a closed simply-connected manifold with a metric of positive scalar curvature to admit a metric of positive Ricci curvature. By \cite{GL80a} every closed simply-connected manifold of dimension 5, 6 or 7 admits a metric of positive scalar curvature, so potentially the same result for positive Ricci curvature could hold. However, there are relatively few known examples of manifolds that admit a complete Riemannian metric with this curvature condition. Existence results were obtained by using bundles and warping techniques (see \cite{BB78}, \cite{GPT98}, \cite{Na79}, \cite{Po75}), and by Lie group actions, in particular actions of low cohomogeneity (see \cite{GZ02}, \cite{Na79}), and biquotients (see \cite{ST04}). In this context we also refer to \cite{SW15}. Further results were obtained by making use of additional structures such as Kähler geometry (see \cite{Ya78}) and Sasakian geometry (see \cite{BG02, BG06a}, \cite{BG06}, \cite{BGN03}). Concerning surgery, it is open whether a similar surgery theorem as for positive scalar curvature holds. By the classical theorem of Bonnet--Myers the connected sum of two closed non-simply-connected $n$-dimensional manifolds (which is the result of surgery in codimension $n$), cannot admit a metric of positive Ricci curvature as the fundamental group of this manifold is infinite. For lower codimension, however, it is still possible that a surgery theorem in the same generality as in the case of positive scalar curvature holds.
	
	Under additional assumptions, Sha and Yang \cite{SY91} proved a surgery theorem for positive Ricci curvature, which was later extended by Wraith \cite{Wr98}. For that, following \cite{Wr98}, we suppose we have the following:
	\begin{enumerate}
		\item[$(S1)$] A Riemannian manifold $(M^{p+q-1},g_M)$ of positive Ricci curvature.
		\item[$(S2)$] An isometric embedding $\iota\colon S^{p-1}(\rho)\times D_R^{q}(N)\hookrightarrow \intr(M)$, where $S^{p-1}(\rho)$ denotes the round $(p-1)$-sphere of radius $\rho>0$ and $D_R^{q}(N)$ denotes the ball of radius $R>0$ in $S^{q}(N)$.
		\item[$(S3)$] A smooth map $T\colon S^{p-1}\to SO(q)$, which induces a diffeomorphism\linebreak $\tilde{T}\colon S^{p-1}\times S^{q-1}\to S^{p-1}\times S^{q-1}$ defined by $(x,y)\mapsto (x,T_x(y))$.
	\end{enumerate}
	
	\begin{theorem}[{\cite[Theorem 0.3]{Wr98}}]
		\label{T:WR-SURGERY}
		Under assumptions (S1)-(S3) let $p\geq q\geq3$. Then there is a constant $\kappa=\kappa(p,q,R/N,T)>0$ such that if $\frac{\rho}{N}<\kappa$, then the manifold
		\[\hat{M}=M\setminus\im(\iota)^\circ\cup_{\tilde{T}} (D^p\times S^{q-1}) \]
		admits a metric of positive Ricci curvature.
	\end{theorem}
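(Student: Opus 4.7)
The plan is to construct a metric of positive Ricci curvature on the attaching piece $D^p \times S^{q-1}$ with a prescribed totally geodesic product collar along its boundary $S^{p-1}\times S^{q-1}$, then to bend $g_M$ in a collar neighborhood of the surgery sphere $\iota(S^{p-1}(\rho) \times S^{q-1}(N\sin(R/N)))$ so that it takes the same product form, and finally to glue the two pieces via $\tilde T$. The key observation that makes this strategy viable is that $\tilde T$, being fibrewise a rotation in $SO(q)$, is an isometry of the round product $\rho^2\, g_{S^{p-1}} + (N\sin(R/N))^2\, g_{S^{q-1}}$; if both sides can be arranged so that the boundary is totally geodesic with this product metric, the glued metric is automatically $C^\infty$, and positivity of Ricci is inherited from each piece.

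For the attaching handle I would view $D^p \times S^{q-1}$ in polar coordinates on $D^p$ as $[0,L]_t \times S^{p-1} \times S^{q-1}$ with the $S^{p-1}$ collapsed at $t=0$, and use a doubly warped Ansatz
\begin{equation*}
	g_1 \;=\; dt^2 + f(t)^2\, g_{S^{p-1}} + h(t)^2\, g_{S^{q-1}} + \theta_T,
\end{equation*}
where $\theta_T$ is a connection-type term built from $T$ and vanishing at $t=0$ so that the $S^{q-1}$-fibers are trivialized at the core while their horizontal transport around loops on $S^{p-1}$ realizes $T$. Smoothness at $t=0$ forces $f(0)=0$, $f'(0)=1$; at $t=L$ one prescribes $f(L)=\rho$, $h(L)=N\sin(R/N)$ with $f'(L)=h'(L)=0$ to obtain a totally geodesic outer collar. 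On the $M$-side, the slice $\iota(\{x\}\times D_R^q(N))$ carries the metric $dr^2 + N^2\sin^2(r/N)g_{S^{q-1}}$ on the radial direction; a standard bending argument (as in Sha--Yang and Wraith) replaces this on a thin outer collar $r\in[R-\varepsilon,R]$ by a warped product whose warping function has vanishing first derivative at $r=R$, producing a totally geodesic boundary with metric $dr^2 + \rho^2 g_{S^{p-1}} + (N\sin(R/N))^2 g_{S^{q-1}}$ while only perturbing the Ricci tensor in a controlled way.

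The main computation is then to verify positivity of $\Ric(g_1)$. Using the doubly warped product formulas -- $\Ric(\partial_t,\partial_t) = -(p-1)f''/f - (q-1)h''/h$, together with the analogous expressions along the two spherical factors and the O'Neill-type corrections coming from $\theta_T$ -- one chooses $f$ strictly concave and nearly linear ($f\approx t$ near $0$, $f(L)=\rho$) and $h$ a small perturbation of the constant $N\sin(R/N)$; concavity of $f$ gives positivity in the $\partial_t$-direction, while intrinsic curvatures of the sphere factors handle the diagonal components along $S^{p-1}$ and $S^{q-1}$. The main obstacle is the mixed horizontal/vertical contribution of the twist: $\theta_T$ introduces a curvature term in the $S^{p-1}$-directions of order $\rho^2 \|dT\|^2 / h^2$ with unfavorable sign, which must be dominated by the positive intrinsic curvature of the $S^{p-1}$-factor of size $\rho$. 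This balancing, together with the amount of room needed to carry out the bending on the $M$-side (which is controlled by $R/N$) and the dimensional restriction $p\geq q\geq 3$ (needed so that no single unfavorable term can overpower the positive ones), is precisely what produces the constant $\kappa=\kappa(p,q,R/N,T)$ and forces $\rho/N < \kappa$.
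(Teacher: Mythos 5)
Your strategy founders on two points, and unfortunately the second one is exactly where the real difficulty of the theorem lives. First, the ``key observation'' that $\tilde T$ is an isometry of the round product $\rho^2ds_{p-1}^2+N^2\sin^2(R/N)ds_{q-1}^2$ is false unless $T$ is constant: $d\tilde T(u,v)=(u,(d_xT(u))y+T_xv)$ contains the term $d_xT(u)$, so $\tilde T$ is fiberwise an isometry but not an isometry of the product. The twist therefore cannot be ignored at the gluing locus; it must be absorbed into the geometry of the handle (in the paper this is done by viewing the handle as $\pi^{-1}(S^p\setminus\varphi(D^p)^\circ)$ for the sphere bundle $E\to S^p$ determined by $T$, endowed with a Vilms submersion metric whose connection is flat over $\varphi(D^p)$, so that the boundary is a genuine Riemannian product and the $T$-dependence enters through how small the fiber radius must be). Your $\theta_T$-term gestures at this, but as set up it vanishes at the core and is nontrivial near $t=L$, which destroys the product form of the boundary you need for your smooth matching.

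Second, and more seriously, the ``standard bending argument'' on the $M$-side does not exist in the form you invoke it. The boundary of $M\setminus\im(\iota)^\circ$ has second fundamental form $\mathrm{diag}\bigl(0,-\tfrac1N\cot(R/N)\bigr)$, i.e.\ it is concave in the $S^{q-1}$-directions, and the only region where you may modify the metric is the explicit collar with metric $\rho^2ds_{p-1}^2+dr^2+k(r)^2ds_{q-1}^2$, $k(r)=N\sin(r/N)$, keeping the germ at the outer end (where it meets the unknown part of $g_M$) fixed. Flattening $k'$ to $0$ at the new boundary while matching $k$ at the other end forces an interval where $k''>0$, and there $\Ric(\partial_r,\partial_r)=-(q-1)k''/k<0$ with nothing to compensate, since the $S^{p-1}$-factor is kept at constant radius $\rho$ and contributes nothing to the radial Ricci curvature; this is a large, not a controlled, perturbation. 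The only way to pay for that convexity is to let the $S^{p-1}$-radius vary concavely as well, i.e.\ to insert a genuine doubly warped neck $I\times S^{p-1}\times S^{q-1}$ with $h$ concave and $f$ convex satisfying $f^\prime\geq\cos(R/N)$ at the $M$-end, and to glue with Perelman's theorem (which only needs $\II_1+\phi^*\II_2\geq0$, not totally geodesic boundaries) rather than by smooth matching; this neck, constructed in the paper from the explicit ODE solutions $h_a$ and $f_{b,C}$, is precisely what produces the constraint $\rho/N<\kappa$. A telltale sign of the gap is that in the untwisted case your argument would prove the theorem with no smallness condition at all: a round hemisphere of radius $\rho$ times $S^{q-1}$ gives a Ricci-positive handle with totally geodesic boundary for every $\rho$, so all of the difficulty has been displaced into the $M$-side bending step, where it cannot be resolved as claimed. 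Also note that your handle imposes $h^\prime(L)=0$ on the $S^{q-1}$-factor, so even Perelman's gluing criterion fails against the concave $M$-boundary; the paper's boundary condition at that end is $f^\prime(t_0)\geq\cos(R/N)$, not $0$.
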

	The metric constructed on $\hat{M}$ coincides outside a neighborhood of the gluing area with the restriction of $g_M$ to $M\setminus\im(\iota)$ and with $D^{p}_{R_1}(N_1)\times S^{q-1}(\rho_1)$ near the center of $D^p\times S^{q-1}$ for some $\rho_1,N_1,R_1>0$. The quotient $\frac{\rho_1}{N_1}$ can be bounded from above by any constant $\kappa^\prime>0$, which then gives an additional dependency for $\kappa$, while the quotient $\frac{R_1}{N_1}$ is constant independently of $M,\iota,T$ and $\kappa^\prime$, see \cite[Proposition 0.4]{Wr98}. This form of the metric allows us to apply Theorem \ref{T:WR-SURGERY} again to the manifold $\hat{M}$, provided $p=q$, which leads to the following theorem by Wraith.
	
	\begin{theorem}[{\cite[Theorems 2.2 and 2.3]{Wr97}}]
		\label{T:WR-PLUMBING}
		Let $W$ be the manifold obtained by plumbing linear $n$-disc bundles over $n$-spheres according to a simply-connected graph or by plumbing together two disc bundles over spheres (where fiber and base dimension may differ). If the fiber and base dimensions are at least 3 then $\partial W$ admits a metric of positive Ricci curvature.
	\end{theorem}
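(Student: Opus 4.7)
Both parts of the theorem share the same inductive strategy: build $\partial W$ one plumbing at a time, applying Theorem \ref{T:WR-SURGERY} at each step. For the graph case, first order the vertices as $v_1,\dots,v_k$ so that each $v_{i+1}$ meets exactly one earlier vertex, possible because a simply-connected graph is a tree. Then $W$ is the iterated plumbing $W_1\natural W_2\natural\cdots\natural W_k$, and at stage $i+1$ a single plumbing attaches the fresh disc bundle $W_{i+1}$ to the current plumbing. The two-disc-bundle case is just $k=2$.

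The key identification is that a single plumbing is a surgery on the boundary. Write $W'=W_1\natural\cdots\natural W_i$ and suppose $W_{i+1}$ is a $D^q$-bundle over $S^p$. The plumbing identifies a trivialized patch $D^p\times D^q\subset W_{i+1}$ with a corresponding patch in $W'$ by swapping factors. On the boundary this removes from $\partial W'$ the trivialized piece $D^p\times S^{q-1}$ and glues in $\partial W_{i+1}\setminus(D^p\times S^{q-1})$. Since $W_{i+1}$ has not been plumbed elsewhere, $\partial W_{i+1}$ is an $S^{q-1}$-bundle over $S^p$ assembled from two trivialized hemispheres, so removing the patch over one hemisphere leaves a piece diffeomorphic to $D^p\times S^{q-1}$. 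Regarding the removed patch as the tubular neighbourhood of an embedded fibre $S^{q-1}$, the operation is exactly a surgery of the type covered by Theorem \ref{T:WR-SURGERY}, with Wraith's parameters equal to $(q,p)$ here. Choosing at the outset whichever of the two disc bundles has the larger fibre dimension as the starting one (trivial in the graph case, where $p=q=n$) ensures that the dimensional condition $p\geq q\geq 3$ is satisfied.

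Base case: equip the sphere bundle $\partial W_1$ with a Ricci-positive metric that is an exact Riemannian product $D^p_R(N)\times S^{q-1}(\rho)$ on a trivialization patch, with $\rho/N$ arbitrarily small. For trivial bundles this is automatic via a round product metric after rescaling the fibre. For non-trivial bundles one constructs such a metric by a warped/clutching construction: take the round product over one hemisphere and extend across the other by a rotationally symmetric warped metric built from the clutching function (valued in $SO(q)$, acting by isometries on the round fibre), arranging the warping functions so that Ricci remains positive everywhere. I expect this base case, not the induction itself, to be the main technical obstacle, in the spirit of Wraith's earlier constructions in \cite{Wr97}.

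Inductive step: given a Ricci-positive metric on $\partial W'$ with the required product piece, apply Theorem \ref{T:WR-SURGERY} to obtain such a metric on $\partial(W'\natural W_{i+1})$. The remark after that theorem in the excerpt is crucial: the output again has a round-product piece $D^p_{R_1}(N_1)\times S^{q-1}(\rho_1)$ with $\rho_1/N_1$ arbitrarily small, so the inductive hypothesis is preserved and the process runs through all $k-1$ plumbings. The only secondary subtlety is chaining the smallness constants $\kappa,\kappa'$ through all stages so that every constraint is simultaneously satisfiable; this can be done by processing the tree backwards and shrinking the fibre parameters of earlier stages as needed.
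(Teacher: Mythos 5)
Your inductive scheme is the right one, and it is essentially Wraith's original argument as well as the scheme this paper uses for its generalization in Section \ref{S:THMS_BC}: order the tree so each new bundle is plumbed to exactly one earlier one, identify the effect of a single plumbing on the boundary as a surgery via \eqref{EQ:PLUMB_BOUND}, apply Theorem \ref{T:WR-SURGERY}, and use the standard form of the output metric ($\rho_1/N_1$ arbitrarily small, $R_1/N_1$ universal) to chain the smallness constants backwards through the finite tree. Your rule for the two-bundle case (take the bundle with the larger fibre dimension as the starting manifold) is indeed how one meets $p\geq q\geq3$, but your bookkeeping there is garbled: with your labels ($W_{i+1}$ a $D^q$-bundle over $S^p$), the piece removed from $\partial W'$ is a tube $S^{p-1}\times D^q$ around a fibre sphere of the bundle already present (whose fibre dimension is $p$), and the piece glued in is $D^p\times S^{q-1}$; the Wraith parameters are $(p,q)$, not $(q,p)$, and ``remove $D^p\times S^{q-1}$, glue in $D^p\times S^{q-1}$'' as written is not a surgery at all. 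This slip is invisible in the graph case ($p=q=n$) and your final prescription still selects the correct starting bundle, but the paragraph should be fixed.

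The genuine gap is the base case, which you flag yourself but do not prove: a Ricci-positive metric on the initial sphere bundle over a sphere containing an isometric product $S^{p-1}(\rho)\times D^q_R(N)$ --- in fact several disjoint such patches when the root has several edges --- with $\rho/N$ below the required threshold, for an arbitrary, possibly nontrivial, linear bundle. Your clutching-plus-warping sketch is not carried out, and it is not clear it works for a general clutching map. This step is exactly what the Vilms construction supplies and is why the paper records Propositions \ref{P:VILMS} and \ref{P:VILMS_RICCI}: take the round metric on the base sphere, choose a principal connection that is flat over the chosen disjoint embedded discs, and take the associated submersion metric with totally geodesic round fibres of radius $\rho$; over each disc the metric is the exact product $D^q_R(1)\times S^{p-1}(\rho)$, it has positive Ricci curvature once $\rho$ is small (compact base, fibre dimension at least $2$), and $\rho$ may be taken below any bound fixed in advance. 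With that substitution your induction closes, and it then runs parallel to the paper's proof of Theorem \ref{T:PLUMBING}, which replaces Theorem \ref{T:WR-SURGERY} by Theorem \ref{T:GEN_SURG} and uses a Gao--Yau deformation to produce the round discs in a general base.
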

	Recall that plumbing is a procedure that glues disc bundles to each other in a certain way, see Section \ref{S:PLUMBING}. The class of manifolds obtained as boundaries of plumbings as in Theorem \ref{T:WR-PLUMBING} is large and contains many interesting examples, including all homotopy spheres that bound parallelizable manifolds \cite[Theorem 2.1]{Wr97} and many highly-connected manifolds \cite{CW17}.
	
	We generalize Theorems \ref{T:WR-SURGERY} and \ref{T:WR-PLUMBING} by replacing the base spheres with manifolds admitting so-called \emph{core metrics}, a notion which was introduced by Burdick \cite{Bu19}.
	\begin{definition}
		Let $M^n$ be a smooth manifold. A Riemannian metric $g$ on $M$ is called a \emph{core metric} if it has positive Ricci curvature and if there is an embedding $\varphi\colon D^n\hookrightarrow \intr(M)$ such that $g|_{\varphi(S^{n-1})}$ is the round metric of radius 1 and such that the second fundamental form $\II_{\varphi(S^{n-1})}$ is positive definite with respect to the inward pointing normal vector of $S^{n-1}\subseteq D^n$.
	\end{definition}
	Since there exist different sign conventions for the second fundamental form, we give a simple example: An appropriate scalar multiple of the round metric on $S^n$ is a core metric, where the embedded disc can be any geodesic ball whose interior contains a hemisphere.
	
	The notion of core metrics is based on work by Perelman \cite{Pe97} and its relevance is illustrated by the theorem below.
	\begin{theorem}[{\cite[Theorem B]{Bu19}}]
		Let $M_i^n$, $1\leq i\leq k$ be manifolds that admit core metrics. If $n\geq 4$, then $\#_i M_i$ admits a metric of positive Ricci curvature.
	\end{theorem}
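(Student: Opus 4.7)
The plan is to build $\#_{i=1}^{k} M_i$ by removing the core disc from each $M_i$ and inserting the resulting pieces into a single round sphere equipped with $k$ disjoint core discs.

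First, I would exhibit a positive Ricci metric on $S^n$ containing $k$ pairwise disjoint core discs. Take the round sphere $S^n(R)$ of radius $R>1$ and a geodesic ball $B_r(p)\subseteq S^n(R)$ of radius $r<\pi R/2$. Its boundary is a round $(n-1)$-sphere of radius $R\sin(r/R)$, and its second fundamental form relative to the normal pointing toward $p$ has all principal curvatures equal to $R^{-1}\cot(r/R)>0$. Choosing $r=R\arcsin(1/R)$ makes the boundary a unit round $S^{n-1}$, so $B_r(p)$ is a core disc of $S^n(R)$. For $R$ sufficiently large, $r/R$ is small and one can place $k$ such balls $B_r(p_1),\ldots,B_r(p_k)$ pairwise disjointly.

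Next, set $N_0 = S^n(R)\setminus\bigsqcup_i\intr(B_r(p_i))$ and $N_i = M_i\setminus\varphi_i(\intr(D^n))$ for $1\leq i\leq k$. Each $N_i$ carries a positive Ricci metric; each boundary component is isometric to the unit round $S^{n-1}$; and on each boundary component the second fundamental form with respect to the outward normal of its ambient piece is positive definite (for $N_i$ with $i\geq 1$ this is precisely the definition of a core metric, while for $N_0$ it follows from the curvature computation just given, since the outward normal of $N_0$ at $\partial B_r(p_i)$ points into $B_r(p_i)$). Identifying the $i$-th boundary component of $N_0$ with $\partial N_i$ for each $i$ yields, topologically, $S^n\#M_1\#\cdots\#M_k = \#_{i=1}^{k} M_i$. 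To produce a smooth Ricci-positive metric, I would invoke a Perelman-style gluing lemma: when two Ricci-positive manifolds meet along isometric boundaries that are strictly convex from both sides, the $C^0$-glued metric can be smoothed in a collar to a $C^\infty$ metric of positive Ricci curvature. The $k$ smoothings take place in disjoint collars and hence do not interfere.

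The main obstacle is the gluing lemma itself. One natural approach is to replace the metric in a narrow collar by a rotationally symmetric warped product $dt^2 + f(t)^2 g_{S^{n-1}}$ on $S^{n-1}\times(-\delta,\delta)$, with warping function $f$ chosen so that $f'(\pm\delta)$ matches the prescribed principal curvatures on each side and so that $f$ is strictly concave near $t=0$. The Ricci curvature of such a warped product gains a positive contribution $-(n-1)f''/f$ from the concavity and, in the fiber directions, a further contribution $(n-2)(1-(f')^2)/f^2$ coming from the intrinsic curvature of the round $(n-1)$-sphere fiber. The delicate task is to calibrate $f$ so that these positive terms dominate the cross-terms uniformly, preserving positive Ricci curvature through the interpolation; it is here that the dimensional hypothesis $n\geq 4$ enters, through the factor $n-2$ in the fiber Ricci term.
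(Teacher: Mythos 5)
You are proving a result that this paper only cites (Burdick's Theorem B), so the comparison is with the Perelman--Burdick argument. Your overall architecture --- excise the core discs and dock the complements $N_i=M_i\setminus\varphi_i(D^n)^\circ$ onto a $k$-holed sphere, then smooth with Perelman's gluing theorem --- is the right shape, but the step producing the docking piece $N_0$ is wrong, and it is exactly the hard part of the theorem. With the sign convention fixed by the paper's own example (a core disc of the round sphere must \emph{contain a hemisphere}), the second fundamental form of $\partial B_r(p)\subset S^n(R)$ with respect to the normal pointing toward $p$ is $-\tfrac{1}{R}\cot(r/R)\,g$, which is \emph{negative} definite for $r<\pi R/2$; your sign is reversed, and your small balls are not core discs. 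Consequently each boundary sphere of $N_0=S^n(R)\setminus\bigcup_i B_r(p_i)^\circ$ is concave from the side of $N_0$ (think of Euclidean space minus a round ball): with $r=R\arcsin(1/R)$ its second fundamental form with respect to the outward normal of $N_0$ is $-\sqrt{1-1/R^2}\,g$. Perelman's gluing criterion then needs $\lambda_i\geq\sqrt{1-1/R^2}$, where $\lambda_i>0$ is the smallest principal curvature of $\varphi_i(S^{n-1})$; a core metric gives no such quantitative lower bound. Nor can you make $\sqrt{1-1/R^2}$ small: disjointness forces $R$ large once $k\geq 3$ (already for $k=3$ one needs $R>2/\sqrt{3}$, so the concavity exceeds $1/2$, and in your ``$R$ sufficiently large'' regime it approaches $1$), while genuine core discs (hemisphere-containing balls) can never be made disjoint for $k\geq 2$. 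So the round sphere cannot serve as the ambient piece. (For $k=2$ one can avoid $N_0$ altogether and glue $N_1$ to $N_2$ directly, since $\II_{N_1}+\II_{N_2}>0$; the real difficulty is $k\geq 3$, equivalently iterating the construction.)

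What is missing is precisely the core of Burdick's proof: the construction of a ``docking station'', i.e.\ a Ricci-positive metric on $S^n$ with $k$ disjoint discs removed whose boundary components are unit round $(n-1)$-spheres with \emph{non-negative} second fundamental form toward the removed discs. Perelman built such an ambient space for his connected sums of complex projective planes, and Burdick adapted and generalized it; this neck/doubly-warped construction is where the hypothesis $n\geq 4$ actually enters, not the $(n-2)$-factor in the collar smoothing you discuss at the end. The smoothing step itself is the already available gluing theorem (the paper's Theorem \ref{T:PER_GLUING}) and does not need to be reproved. As written, the gluing hypothesis fails and the key construction is absent.
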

	The main goal of \cite{Bu19a}, \cite{Bu19} and \cite{Bu20} was to construct manifolds which admit core metrics. The main results are given as follows.
	\begin{theorem}[\cite{Pe97}, \cite{Bu19a}, \cite{Bu19}, \cite{Bu20}, \cite{Bu20a}]
		\label{T:CORE_METRICS}
		The following manifolds admit core metrics:
		\begin{enumerate}
			\item $S^n$, if $n\geq2$;
			\item $\C P^n$, $\Quat P^n$ and $\mathbb{O}P^2$;
			\item $M^n_1\# M^n_2$ if $n\geq4$ and $M_1$, $M_2$ admit core metrics;
			\item Total spaces of linear sphere bundles $E\to B$ with fiber and base dimension at least $3$ if $B$ admits a core metric;\footnote{In \cite{Bu20} this result is stated with no restriction on the base dimension. However, the proof given in \cite{Bu20} does not work if the base is 2-dimensional, since the metric on $M_1$ in \cite[Definition 3]{Bu20} does not have positive Ricci curvature if $n=1$. In Theorem \ref{T:2-SPHERE_BUNDLES}, by using Theorem \ref{T:PLUMBING}, we fix this by giving an alternative proof provided the fiber dimension is at least 4. The author would like to thank Bradley Burdick for helpful discussions.}
			\item $\partial W$ for $W$ as in Theorem \ref{T:WR-PLUMBING} if one of the disc bundles has fiber dimension at least 4.
		\end{enumerate}
	\end{theorem}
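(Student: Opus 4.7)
The plan is to treat the five items of Theorem \ref{T:CORE_METRICS} separately, importing several from the literature and deferring the rest to later theorems of this paper. For item (1), any round metric on $S^n$ of sufficiently large radius is a core metric: as indicated in the example following the definition, the embedded disc is a geodesic ball whose interior contains a hemisphere, so that the induced boundary metric is round and the inward-pointing $\II$ is positive definite by a direct cotangent computation. Item (2) follows from the Fubini--Study-type constructions of \cite{Bu19a,Bu20a} on $\C P^n$, $\Quat P^n$ and $\mathbb{O}P^2$, while item (3) is provided by Burdick's connected sum construction \cite{Bu19}, in which the two core discs are removed and the remaining pieces are identified along their round boundary spheres via a Perelman-style collar.

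Items (4) and (5) rely on the generalized plumbing theorem of the present paper, Theorem \ref{T:PLUMBING}. Item (5) is a direct output of that theorem, which asserts that the Wraith plumbing boundary $\partial W$ in fact admits a core metric as soon as one of the disc bundles has fiber dimension at least four. For item (4) we distinguish cases by the base dimension. When $\dim B = 3$ we rely on Burdick's argument in \cite{Bu20}; when $\dim B \geq 4$ we reprove the statement as Theorem \ref{T:2-SPHERE_BUNDLES} by realizing a linear $S^{q-1}$-bundle $E \to B$ (with $q-1 \geq 3$, hence $q \geq 4$) as the boundary of the associated linear $D^q$-bundle over $B$, viewed as a one-vertex plumbing, and then applying item (5).

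The principal obstacle is therefore not in the organization above but in establishing Theorem \ref{T:PLUMBING} itself, which in turn depends on a generalization of Wraith's surgery theorem (Theorem \ref{T:WR-SURGERY}) in which the attaching region $S^{p-1} \times D^q$ is replaced by a linear sphere bundle over a manifold equipped with a core metric. The delicate point will be to deform the metric in the gluing collar so that positive Ricci curvature is retained and so that the output carries the product-like structure near the new boundary piece required in order to iterate the construction. This is where essentially all of the analytic work of the paper is concentrated.
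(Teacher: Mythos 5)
Your organization of items (1)--(3) matches the paper's: Theorem \ref{T:CORE_METRICS} is a background statement that the paper does not prove but imports wholesale from the literature ($S^n$ from Perelman/Burdick, the projective spaces from \cite{Bu19a} and \cite{Bu20a}, connected sums from \cite{Bu19}, item (4) from \cite{Bu20}, item (5) from \cite{Bu20a}), with the footnote only recording that the argument of \cite{Bu20} breaks down over a $2$-dimensional base -- a case excluded by the hypotheses of item (4) as stated. Deriving item (5) from Theorem \ref{T:PLUMBING}(2) instead of citing \cite{Bu20a} is legitimate and non-circular, since the proof of Theorem \ref{T:PLUMBING} rests on Theorem \ref{T:GEN_SURG} and the gluing results of \cite{Pe97} and \cite{Bu19}, not on item (5); it merely inverts the paper's logical order, in which Theorem \ref{T:PLUMBING} is presented as a generalization of item (5).

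The genuine gap is in your treatment of item (4). Theorem \ref{T:2-SPHERE_BUNDLES} does not ``reprove item (4) when $\dim B\geq 4$'': it treats precisely the complementary cases, namely fiber dimension $2$ over a base of dimension at least $4$, or fiber dimension at least $4$ over a $2$-dimensional base. It is an extension of item (4) beyond the range ``fiber and base dimension at least $3$'' and says nothing about, say, an $S^3$-bundle over a $4$-manifold with a core metric, which item (4) must cover. Your fallback mechanism -- realizing $E\to B$ as the boundary of its disc bundle viewed as a one-vertex plumbing and then applying item (5) -- also fails in exactly this range: item (5) (like Theorem \ref{T:WR-PLUMBING}) concerns plumbings of disc bundles over \emph{spheres}, so it does not apply to a general base $B$, and the core-metric conclusion of Theorem \ref{T:PLUMBING}(2), which is what such a degenerate plumbing argument would really invoke, requires fiber dimension at least $4$, leaving fiber dimension $3$ out of reach. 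In the stated range of item (4) the correct source is simply \cite{Bu20}, whose proof is valid whenever the base dimension is at least $3$; your case split at $\dim B=3$ versus $\dim B\geq4$ is therefore both unnecessary and, for fiber dimension $3$, unsupported by the route you propose. Relatedly, the footnote's repair via Theorem \ref{T:2-SPHERE_BUNDLES} concerns only the low-dimensional cases outside item (4), not a second proof of it.
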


	Our main result is the following.
	\begin{theoremAlph}
		\label{T:GEN_SURG}
		Under the assumptions (S1) and (S2) let $B^p$ be a manifold with a core metric $g_B$, let $E\xrightarrow{\pi} B$ be a linear $S^{q-1}$-bundle, and let $r>0$ be sufficiently small. If $p,q\geq3$ then there is a constant $\kappa=\kappa(p,q,R/N,g_B,r)>0$ such that if $\frac{\rho}{N}<\kappa$, then the manifold
		\[\hat{M}=M\setminus\im(\iota)^\circ\cup_\partial \pi^{-1}(B\setminus \varphi(D^p)^\circ) \]
		admits a metric of positive Ricci curvature. This metric coincides outside a neighborhood of the gluing area with a submersion metric on $E$ with totally geodesic and round fibers of radius $r$ and with a scalar multiple of the metric $g_M$ on $M$.
	\end{theoremAlph}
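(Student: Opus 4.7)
The strategy is to adapt Wraith's proof of Theorem~\ref{T:WR-SURGERY} to the bundle setting, using the core metric on $B$ in place of a round metric on $S^p$ and a submersion metric on $E$ in place of the product metric on $S^p \times S^{q-1}$. The new metric on $\hat M$ is built in two pieces joined along a thin collar: a Ricci positive submersion metric on $\pi^{-1}(B\setminus\varphi(D^p)^\circ)$ with a convenient warped form near the gluing boundary, together with a Wraith-type doubly warped bridge sitting inside $M\setminus\im(\iota)$.

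First I exploit the defining properties of the core metric---$g_B|_{\varphi(S^{p-1})}$ is round of radius $1$ and $\II_{\varphi(S^{p-1})}$ is positive definite---to deform $g_B$ in a one-sided collar outside $\varphi(D^p)$ into an exact warped product $dt^2+f(t)^2 g_{S^{p-1}}$ with $f(0)=1$ and $f'(0)$ of the sign prescribed by $\II$, while retaining positive Ricci curvature; this is a standard collar argument made possible by the convexity of the boundary. Next, after scaling the deformed metric by some $\beta^2$ depending only on $g_B$ and $r$, I put on $E$ a submersion metric with base $\beta^2 g_B$ and with round, totally geodesic fibers of radius $r$. By the O'Neill formulas, once $r/\beta$ is small enough the vertical curvatures dominate and one obtains a Ricci positive metric on $E$, which is precisely the mechanism used in the proof of Theorem~\ref{T:CORE_METRICS}(4). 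Since the bundle is trivial over $\varphi(D^p)$, I may choose the connection to be flat there; the restriction of the submersion metric to the collar then reads
\[ dt^2 + \beta^2 f(t)^2 g_{S^{p-1}} + r^2 g_{S^{q-1}} \]
in the obvious trivialization.

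To glue to $M\setminus\im(\iota)$, I extend the normal coordinate $t$ inward using the product structure from (S2), rescale $g_M$ by some $\alpha^2>0$, and replace $\beta f(t)$ and $r$ by smooth functions $A(t), B(t)$ on an interval $[0,t_1]$ that interpolate between the rescaled product boundary values $\alpha\rho,\ \alpha N\sin(R/N)$ (with first derivatives prescribed by the round disc $D^q_R(N)$) and $\beta f(\epsilon),\ r$ (with derivatives matching the submersion collar). Positive Ricci of the resulting doubly warped metric $dt^2+A(t)^2 g_{S^{p-1}}+B(t)^2 g_{S^{q-1}}$ reduces to the same differential inequalities on $A,B$ that are the heart of the analysis in \cite{Wr98}; provided $\rho/N$ is small enough, these can be arranged with the prescribed endpoint data, yielding the constant $\kappa$.

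\textbf{Main obstacle.} The principal new difficulty compared to \cite{Wr98} is that the inner endpoints $\beta f(\epsilon),\ r$ of the interpolation are forced by $g_B$ and $r$, rather than being free parameters as in the original setting (where only the ratio $R_1/N_1$ is fixed). This forces $\kappa$ to depend on $g_B$ and $r$ and requires choosing the scale $\beta$ carefully so that the class of admissible Wraith interpolations covers the prescribed endpoint data. A secondary point is controlling the A-tensor contributions to the Ricci curvature of the submersion metric on $E$ outside the trivialization over $\varphi(D^p)$; these are uniformly bounded in terms of $g_B$ and scale favorably as $r\to 0$, so they can be absorbed into the dependence of $\kappa$ on $g_B$ and $r$ without further obstruction.
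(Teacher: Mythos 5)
Your overall architecture is the same as the paper's (a connection flat over $\varphi(D^p)$ so that the Vilms metric is a product there, a doubly warped interpolation region between $\pi^{-1}(B\setminus\varphi(D^p)^\circ)$ and $M\setminus\im(\iota)^\circ$, and smallness of $\rho/N$ to close up the construction), but the central analytic step is missing rather than proved. You assert that the interpolating functions $A,B$ exist because positivity of the Ricci curvature "reduces to the same differential inequalities" as in \cite{Wr98}. The curvature expressions are indeed the standard doubly warped ones, but the boundary data here are genuinely different from Wraith's: in \cite{Wr98} the $S^{p-1}$-factor is capped off ($h(0)=0$, $h'(0)=1$, and the argument uses $p\geq q$), whereas here $h$ must start at a prescribed positive radius with slope at most the smallest eigenvalue $\lambda$ of $\II_{\varphi(S^{p-1})}$ (which may be arbitrarily small), $f$ must start at the prescribed fiber radius $r$ with $f'\leq 0$, and at the far end one needs $f'\geq\cos(R/N)$ and $h'\geq 0$ with the ratio $h/f$ equal to the forced value $\rho/(N\sin(R/N))$ — and all this only assuming $p,q\geq3$. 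Producing such functions with positive Ricci curvature is exactly the content of the paper's Section~3 (the ansatz $h_0'=e^{-h_0^2/2}$, $f_C''=Ce^{-h_0^2}f_C$, and the asymptotic analysis of Lemmas \ref{L:FC_PROPERTIES} and \ref{L:tb}, which is where the constant $\kappa$ and its dependence on $\lambda$, $r$, $R/N$ actually come from); it cannot be quoted from Wraith, and your "main obstacle" paragraph names precisely this difficulty (forced inner endpoints) without resolving it. That is the genuine gap.

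Two secondary points. First, your opening step — deforming $g_B$ on a collar into an exact warped product $dt^2+f(t)^2g_{S^{p-1}}$ while keeping positive Ricci curvature — is not a "standard collar argument"; what the convexity of $\varphi(S^{p-1})$ actually provides is the hypothesis of Perelman's gluing theorem (Theorem \ref{T:PER_GLUING}). The paper avoids any such deformation: it glues the neck directly to the unmodified submersion metric, using only positive semidefiniteness of the sum of second fundamental forms, which is exactly why inequality-type endpoint conditions \eqref{EQ:BOUND_COND_0'} and \eqref{EQ:BOUND_COND_t0'} suffice and no $C^1$ matching is required. Second, at the $M$-end you cannot literally match first derivatives of the metric: (S2) gives no control of $g_M$ transverse to $\iota(S^{p-1}\times\partial D^q_R(N))$ from the outside; what you can match is the boundary metric and its second fundamental form \eqref{EQ:II_IOTA}, after which you still need a gluing/smoothing statement (Theorem \ref{T:PER_GLUING}, or Corollary \ref{C:WARP_GLUING} in the warped situation) — the same tool you bypassed in the collar step. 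Neither of these two points is fatal, since both can be routed through Perelman's theorem as in the paper, but as written they are unjustified, and they do not repair the main gap above.
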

	More precisely, the dependence of $\kappa$ on the metric $g_B$ is completely determined by the smallest eigenvalue of the second fundamental form $\II_{\varphi(S^{p-1})}$.
	
	Theorem~\ref{T:GEN_SURG} differs from Theorem~\ref{T:WR-SURGERY} in two aspects: First, the requirement on the dimensions $p$ and $q$ is more general and second, the surgery construction itself is generalized. Note that under the assumptions $B=S^p$ and $p\geq q$ we precisely obtain Theorem \ref{T:WR-SURGERY} from Theorem \ref{T:GEN_SURG}.
	
	Theorem \ref{T:GEN_SURG} can now be used to prove the following generalization of Theorem \ref{T:WR-PLUMBING} and of item~(5) of Theorem \ref{T:CORE_METRICS}. Note that, since we merely need to require $p,q\geq3$ in Theorem~\ref{T:GEN_SURG}, we do not need the assumption from Theorem~\ref{T:WR-PLUMBING} that base and fiber dimensions coincide.
	\begin{theoremAlph}
		\label{T:PLUMBING}
		Let $W$ be the manifold obtained by plumbing linear disc bundles $\overline{E}_i$ with compact base manifolds $B_i$, $1\leq i\leq k$, according to a simply-connected graph. Then the following holds:
		\begin{enumerate}
			\item If $B_1$ admits a metric of positive Ricci curvature and $B_i$ for $i\geq 2$ admits a core metric, then $\partial W$ admits a metric of positive Ricci curvature provided $\dim(B_i)\geq3$ for all $i$ (or, equivalently, all fiber dimensions are at least 3).
			\item If $B_1$ also admits a core metric with $\dim(B_1)\geq3$ and the fiber dimension of $\overline{E}_1$ is at least 4, then $\partial W$ admits a core metric.
		\end{enumerate} 
	\end{theoremAlph}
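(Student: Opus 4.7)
The plan is to prove both (1) and (2) by induction on the number of vertices $k$, peeling off one leaf of the tree at a time and invoking Theorem \ref{T:GEN_SURG} at each step. Fix a rooted spanning-tree ordering $v_1, \dots, v_k$ rooted at $v_1$, so that each $v_i$ with $i \geq 2$ is adjacent to a unique earlier vertex $v_{j(i)}$; write $W_i$ for the partial plumbing of $\overline{E}_1, \dots, \overline{E}_i$, so $W_k = W$. The plumbing convention forces $p_i = q_{j(i)}$ and $q_i = p_{j(i)}$ at every edge, whence $q_i = p_{j(i)} \geq 3$ for all $i \geq 2$.

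For the base case $i = 1$, $\partial W_1 = E_1$ is a linear $S^{q_1-1}$-bundle over $B_1$. Under (1) I would equip $E_1$ with a Riemannian submersion metric over the positive Ricci metric on $B_1$, with totally geodesic round fibers of small radius $r$; positivity of Ricci on $E_1$ follows from the O'Neill formulas (\cite{Na79, Po75}). Under (2) Theorem \ref{T:CORE_METRICS}(4) directly supplies a core metric on $E_1$, since $\dim B_1 \geq 3$ and $q_1 \geq 4$ yield the required base and fiber dimensions at least $3$. In both situations, by a small local adjustment of the base metric on $B_1$ (performed away from the core disc in case (2)), I arrange that over each plumbing disc $D^{p_1} \subset B_1$ the base metric is isometric to a spherical cap $D_R^{p_1}(N)$; the submersion metric is then isometric to $D_R^{p_1}(N) \times S^{q_1-1}(\rho)$ over every trivialized plumbing piece of $E_1$.

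For the inductive step, suppose the metric on $\partial W_{i-1}$ has positive Ricci curvature and that at each remaining plumbing site in $E_{j(i')} \subset \partial W_{i-1}$ (for $i' \geq i$) it is isometric to $S^{q_{j(i')}-1}(\rho) \times D_R^{p_{j(i')}}(N)$, with the sphere factor tangent to the fiber. Topologically $\partial W_i$ is obtained from $\partial W_{i-1}$ by removing the product neighborhood at the $v_i$-site in $E_{j(i)}$ and gluing in $\pi_i^{-1}(B_i \setminus \varphi_i(D^{p_i})^\circ)$ via the swap identification, which is exactly the operation of Theorem \ref{T:GEN_SURG} applied with $M = \partial W_{i-1}$, $B = B_i$, $E = E_i$, $p = p_i = q_{j(i)}$, and $q = q_i = p_{j(i)}$. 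The dimensional hypotheses $p,q \geq 3$ reduce to $p_i, p_{j(i)} \geq 3$, and $B_i$ carries a core metric since $i \geq 2$. Choosing the previous-step parameter $\rho$ so that $\rho/N < \kappa(p,q,R/N,g_{B_i},r)$, Theorem \ref{T:GEN_SURG} yields a metric of positive Ricci curvature on $\partial W_i$ which, outside the gluing neighborhood, is simultaneously a rescaling of the old metric on $\partial W_{i-1}$ and a submersion metric with totally geodesic round fibers of radius $r$ on $E_i$. Product regions at remaining sites $i' > i$ lie outside the gluing neighborhood and so survive up to this rescaling, maintaining the inductive hypothesis.

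For part (2), at the base case I additionally fix an embedded core disc $\varphi(D^n) \subset E_1$, supplied by Theorem \ref{T:CORE_METRICS}(4) and chosen disjoint from every plumbing site on $E_1$. Each application of Theorem \ref{T:GEN_SURG} modifies the metric only near its gluing area, so after the $k-1$ inductive steps the induced metric on $\varphi(D^n)$ has merely been scaled by one overall constant; a final rescaling restores $\varphi(S^{n-1})$ to round radius $1$ while preserving positivity of Ricci curvature and of $\II_{\varphi(S^{n-1})}$, yielding the required core metric on $\partial W$. The main obstacle I anticipate is coordinating the parameters $(\rho, R, N, r)$ consistently across all $k$ invocations of Theorem \ref{T:GEN_SURG}: since the threshold $\kappa$ at step $i$ depends on the parameters set at step $i-1$, they must be selected iteratively in reverse order, beginning with the last plumbing and exploiting the explicit dependence of $\kappa$ on the smallest eigenvalue of the second fundamental form of the core disc of $g_{B_i}$.
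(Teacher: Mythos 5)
Your treatment of part (1) is essentially the paper's argument: equip $E_1$ with a Vilms submersion metric with small fibers (Proposition \ref{P:VILMS_RICCI}), deform $g_{B_1}$ locally so that each plumbing disc becomes a round cap (the paper does this via the Gao--Yau deformation result; you assert it as a ``small local adjustment'', which is exactly the ingredient needed and should be attributed), and then attach the bundles one at a time along the tree by Theorem \ref{T:GEN_SURG}, shrinking fiber radii to meet the $\kappa$-thresholds. Your ``reverse-order'' parameter selection is just a reorganization of the paper's bookkeeping (``attach $E_i$ by possibly decreasing the fiber radii of all preceding bundles''), and the tree structure makes either order consistent. So part (1) is fine.

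Part (2), however, has a genuine gap. You propose to start the induction from the core metric on $E_1$ supplied by Theorem \ref{T:CORE_METRICS}(4) and to perform all surgeries away from its core disc. But Theorem \ref{T:CORE_METRICS}(4) is a black box: the core metric it provides is not a submersion metric with totally geodesic round fibers (Burdick's construction glues in an extra piece), so there is no reason it contains the isometric product regions $S^{p-1}(\rho)\times D^{q}_R(N)$ that hypothesis (S2) of Theorem \ref{T:GEN_SURG} demands at each plumbing site --- your phrase ``the submersion metric is then isometric to $D^{p_1}_R(N)\times S^{q_1-1}(\rho)$'' tacitly assumes it is one. Worse, even if such regions existed, their fiber radius $\rho$ is now fixed by the core metric, whereas the induction needs the freedom to make $\rho$ arbitrarily small to satisfy $\rho/N<\kappa$; in part (1) this freedom comes precisely from the Vilms parameter $r$, which is lost once you insist on the core metric. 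The paper proceeds differently: it runs the part-(1) construction on $E_1$ with the submersion metric, takes the connection flat over the core disc $\varphi(D^{q})\subseteq B_1$ and the surgery sites disjoint from it, so that $\pi^{-1}(\varphi(D^q))$ is an untouched metric product with boundary $ds^2_{q-1}+\rho^2 ds^2_{p-1}$ and non-negative $\II$; it then removes this piece and glues in Burdick's ``docking station'' $D^{q}\times S^{p-1}$ with the core metric of Proposition \ref{P:DISC_SPHERE_GLUE} via Theorem \ref{T:PER_GLUING} (after perturbing $\II$ to be positive and rescaling so the boundary is $R^2ds^2_{q-1}+ds^2_{p-1}$, $R>1$). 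This is where the hypothesis that the fiber dimension of $\overline{E}_1$ is at least $4$ is actually used ($p\geq4$ in Proposition \ref{P:DISC_SPHERE_GLUE}); in your version that hypothesis only feeds into the citation of Theorem \ref{T:CORE_METRICS}(4), and the actual mechanism producing the core disc on $\partial W$ is missing. Your final rescaling remark is harmless, but it does not address this structural problem.
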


	As an application we will show that Theorem \ref{T:WR-PLUMBING} can be used to extend item (4) of Theorem \ref{T:CORE_METRICS} as follows.
	\begin{theoremAlph}
		\label{T:2-SPHERE_BUNDLES}
		Let $E\to B^q$ be a linear $S^p$-bundle and suppose that
		\begin{itemize}
			\item $p=2$ and $q\geq4$, or
			\item $q=2$ and $p\geq4$.
		\end{itemize}
		 If $B$ is closed and admits a core metric, then $E$ admits a core metric.
	\end{theoremAlph}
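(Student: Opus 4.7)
The plan is to realize the sphere bundle $E$ as the boundary of a linear plumbing $W$ satisfying the hypotheses of Theorem~\ref{T:PLUMBING}(2), from which the core metric on $E$ follows immediately.

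For the case $p=2$, $q\geq 4$, let $\overline{E}_0\to B^q$ denote the associated $D^3$-disc bundle, so that $\partial\overline{E}_0=E$. Since the fiber dimension of $\overline{E}_0$ is only~$3$, the single-node plumbing consisting of $\overline{E}_0$ does not satisfy the fiber-dimension $\geq 4$ hypothesis of Theorem~\ref{T:PLUMBING}(2). To remedy this, I would form the three-node linear (hence simply-connected) plumbing $W$ with nodes
\[
D^3\times S^q \;-\; D^q\times S^3 \;-\; \overline{E}_0,
\]
the dimensions at each edge being compatible. The middle bundle $D^q\times S^3$ has fiber dimension $q\geq 4$ and base $S^3$ admitting a round core metric, so I designate it as $\overline{E}_1$ in the notation of Theorem~\ref{T:PLUMBING}(2); the bases $S^q$ and $B^q$ of the other two nodes also admit core metrics of dimension $\geq 3$. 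The key topological step is to verify $\partial W\cong E$. Writing $W_0$ for the sub-plumbing of the first two nodes, the standard decomposition $S^{q+2}=\partial(D^q\times D^3)=(S^{q-1}\times D^3)\cup_{S^{q-1}\times S^2}(D^q\times S^2)$ gives $\partial W_0\cong S^{q+2}$. Plumbing $W_0$ with $\overline{E}_0$ then amounts, on the level of boundaries, to a generalized surgery of Theorem~\ref{T:GEN_SURG} type that removes the $S^{q-1}\times D^3$ piece from $S^{q+2}$ and glues in $\pi^{-1}(B^q\setminus D^q)=E\setminus(D^q\times S^2)^\circ$, yielding exactly $E$. Theorem~\ref{T:PLUMBING}(2) now delivers a core metric on $\partial W\cong E$.

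For the case $q=2$, $p\geq 4$, the base $B^2$ is a closed surface admitting positive Gauss curvature, so $B\cong S^2$ or $\R P^2$; I would focus on $S^2$, the other case being analogous. The trivial $S^p$-bundle $E=S^p\times S^2$ can be reinterpreted as the trivial $S^2$-bundle over $S^p$, reducing to the previous case with base $S^p$ of dimension $p\geq 4$. For the nontrivial $S^p$-bundle, classified by the nonzero element of $\pi_1(SO(p+1))=\Z/2$, the obstruction is that $S^2$ has dimension $2<3$ and so cannot serve as a base in any plumbing covered by Theorem~\ref{T:PLUMBING}. I would instead exploit the fact that the nontrivial $E$ is obtained from $S^{p+2}$ by a codimension-$(p+1)$ surgery along a standardly embedded $S^1$ with twisted framing, and implement this surgery by extending the two-node plumbing $D^3\times S^p\,-\,D^p\times S^3$ (whose boundary is $S^{p+2}$ and which already satisfies Theorem~\ref{T:PLUMBING}(2) with $\overline{E}_1:=D^p\times S^3$) by a further plumbing node whose base is a closed $3$-manifold with a core metric (such as a lens space) and whose attaching framing reproduces the required clutching.

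The hardest part in both cases is the topological identification $\partial W\cong E$, and in particular, in the case $q=2$, the construction of an auxiliary plumbing node whose framing implements the nontrivial clutching of $E\to S^2$ without using $S^2$ itself as a base. Once $\partial W$ has been correctly identified with $E$, Theorem~\ref{T:PLUMBING}(2) produces the desired core metric directly.
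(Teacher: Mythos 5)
Your first case ($p=2$, $q\geq4$) is essentially the paper's argument: the same three-node linear plumbing (disc bundle of $E$, trivial $D^q$-bundle over $S^3$, trivial $D^3$-bundle over $S^q$), with $\overline{E}_1$ taken to be the $D^q$-bundle over $S^3$ so that Theorem \ref{T:PLUMBING}(2) applies; your identification of $\partial W$ with $E$ via the intermediate sphere $S^{q+2}$ is just a repackaging of the paper's direct computation with \eqref{EQ:PLUMB_BOUND}, and is fine. Likewise the reduction of the trivial bundle over $S^2$ to the first case is the paper's move. (A small point: a closed surface with a core metric must be $S^2$ -- $\R P^2$ admits none -- so there is no ``analogous'' $\R P^2$ case to defer; as stated, that deferral is itself unjustified, since nothing in your argument would handle $\R P^2$.)

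The genuine gap is the non-trivial $S^p$-bundle over $S^2$. Your proposal -- attach to the two-node plumbing with boundary $S^{p+2}$ a further node whose base is a closed $3$-manifold (e.g.\ a lens space) with a framing ``reproducing the required clutching'' -- cannot work as described. Plumbing a new node onto an existing $D^3$-bundle node always effects, on the boundary, a surgery along a \emph{fiber} $2$-sphere of that node's sphere bundle (cf.\ \eqref{EQ:PLUMB_BOUND}); it never performs the codimension-$(p+1)$ surgery along an embedded $S^1$ that produces $S^2\widetilde{\times}S^p$, and the framing freedom available in a plumbing lives in $\pi_2(\mathrm{SO}(p))=0$, not in $\pi_1(\mathrm{SO}(p+1))\cong\Z/2$, so no choice of ``attaching framing'' can create the non-trivial clutching. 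Moreover, using a lens space $L$ with $\pi_1(L)=\Z/k$, $k\geq2$, as a base would, by van Kampen applied to the boundary decomposition, produce a manifold with fundamental group $\Z/k$, which $S^2\widetilde{\times}S^p$ does not have; and $L=S^3$ just reproduces the trivial bundle. The missing idea, which is where the paper does real work, is to plumb $\C P^2\times D^{p-1}$ with $S^{p-1}\times D^4$: since $\C P^2\setminus (D^4)^\circ$ is the Hopf $D^2$-disc bundle over $S^2$, the boundary inherits a linear $S^p$-bundle structure over $S^2$ with structure group $\mathrm{SO}(2)$, and a $w_2$ computation shows it is the non-trivial bundle; Theorem \ref{T:PLUMBING}(2) (with $\overline{E}_1=S^{p-1}\times D^4$) then gives the core metric. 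Without an argument of this kind, your proof of the $q=2$ case is incomplete.
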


	The proof of Theorem~\ref{T:2-SPHERE_BUNDLES} relies on alternative descriptions of the total spaces of these bundles in terms of plumbings that satisfy the assumptions of Theorem \ref{T:PLUMBING}.

	Finally, we consider applications in dimension 6. An immediate consequence of Theorem \ref{T:CORE_METRICS} and Theorem \ref{T:2-SPHERE_BUNDLES} is the corollary below.
	\begin{corollary}
		\label{C:CORE_METRICS_DIM6}
		The following 6-manifolds admit core metrics:
		\begin{enumerate}
			\item $S^6$;
			\item $S^2\times S^4$ and $S^2\ttimes S^4$, the unique non-trivial $S^4$-bundle over $S^2$;
			\item $S^3\times S^3$;
			\item Linear $S^2$-bundles over $\#_{i=1}^k (\pm\C P^2)$ for all $k\in\N_0$ (for $k=0$ we obtain $S^4$);
			\item Finitely many connected sums of the manifolds in (1)-(4).
		\end{enumerate}
	\end{corollary}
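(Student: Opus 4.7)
The plan is to verify each item by assembling the existence results already at our disposal, namely Theorem~\ref{T:CORE_METRICS} and Theorem~\ref{T:2-SPHERE_BUNDLES}. Item (1) is simply Theorem~\ref{T:CORE_METRICS}(1) in dimension $6$. For item (3), the product $S^3\times S^3$ is the total space of the trivial linear $S^3$-bundle over $S^3$; since $S^3$ admits a core metric by Theorem~\ref{T:CORE_METRICS}(1), and both fiber and base dimensions equal $3$, Theorem~\ref{T:CORE_METRICS}(4) produces a core metric.

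For item (2), both $S^2\times S^4$ and $S^2\widetilde{\times} S^4$ are linear $S^4$-bundles over $S^2$ (indeed the only two, up to isomorphism). The base $S^2$ admits a core metric by Theorem~\ref{T:CORE_METRICS}(1), so Theorem~\ref{T:2-SPHERE_BUNDLES} applied with $q=2$ and $p=4$ supplies core metrics on both total spaces. For item (4), we first construct a core metric on each connected sum $\#_{i=1}^k(\pm\C P^2)$: the building blocks $\C P^2$ and $\overline{\C P^2}$ admit core metrics by Theorem~\ref{T:CORE_METRICS}(2), and since the base dimension is $4$, an iterated application of Theorem~\ref{T:CORE_METRICS}(3) produces a core metric on the connected sum for every $k\geq1$; the case $k=0$ is $S^4$, which admits a core metric by Theorem~\ref{T:CORE_METRICS}(1). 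Theorem~\ref{T:2-SPHERE_BUNDLES} with $p=2$ and $q=4$ then endows every linear $S^2$-bundle over $\#_{i=1}^k(\pm\C P^2)$ with a core metric.

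Finally, item (5) is a last application of Theorem~\ref{T:CORE_METRICS}(3): all manifolds in (1)--(4) have dimension $6\geq4$ and admit core metrics, so arbitrary finite connected sums among them inherit core metrics. No step presents a genuine obstacle; the content of the corollary is bookkeeping, and the only subtlety is recognizing that the two sphere bundles in (2) and the bundles in (4) fall within the scope of Theorem~\ref{T:2-SPHERE_BUNDLES} and that the base 4-manifolds in (4) are themselves covered by items (2) and (3) of Theorem~\ref{T:CORE_METRICS}.
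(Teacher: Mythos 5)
Your proposal is correct and follows exactly the route the paper takes: the paper derives Corollary \ref{C:CORE_METRICS_DIM6} directly from Theorem \ref{T:CORE_METRICS} and Theorem \ref{T:2-SPHERE_BUNDLES}, and your item-by-item bookkeeping (including viewing $S^3\times S^3$ as a trivial $S^3$-bundle over $S^3$ for Theorem \ref{T:CORE_METRICS}(4), using Theorem \ref{T:2-SPHERE_BUNDLES} for the $S^4$-bundles over $S^2$ and the $S^2$-bundles over $\#_{i=1}^k(\pm\C P^2)$, and closing with Theorem \ref{T:CORE_METRICS}(3) for connected sums) is precisely the intended argument.
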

	Note that $\C P^3$ has the structure of a linear $S^2$-bundle over $S^4$, so it is contained in item (4). The manifolds in this item will be analyzed in Section \ref{SS:S2-BUNDLES} below.
	
	Corollary \ref{C:CORE_METRICS_DIM6} provides infinitely many new examples, both spin and non-spin, of 6-manifolds with a metric of positive Ricci curvature, see Proposition \ref{P:NEW} and Remark \ref{R:NEW} below.
	
	In Corollary \ref{C:CORE_METRICS_DIM6} we did not consider applications of Theorem \ref{T:PLUMBING}. In fact, by using plumbings as in Theorem \ref{T:PLUMBING}, we can construct a much larger class of 6-manifolds with a metric of positive Ricci curvature than the class of manifolds in Corollary \ref{C:CORE_METRICS_DIM6}. This will be carried out in a future paper \cite{Re22a}.

	This paper is organized as follows. In Section \ref{S:PREL} we recall the plumbing construction and the Vilms construction to obtain submersion metrics on fiber bundles. We proceed in Section \ref{S:THM_A} with the proof of Theorem \ref{T:GEN_SURG} and in Section \ref{S:THMS_BC} with the proof of Theorems \ref{T:PLUMBING} and \ref{T:2-SPHERE_BUNDLES}. Finally, in Section \ref{S:6-MANIFOLDS} we consider applications in dimension~6.
	
	\begin{ack}
		The author would like to thank Fernando Galaz-Garc\'ia and Wilderich Tuschmann for many helpful conversations and comments. The author would also like to thank the Department of Mathematical Sciences of Durham University for its hospitality during a visit where parts of the work contained in this paper were carried out. Finally, the author would like to thank the anonymous referees for their suggestions, which significantly improved the presentation of this article.
	\end{ack}
	
	\section{Preliminaries}
	\label{S:PREL}
	In the following the term \emph{manifold} will denote a smooth manifold, possibly with boundary. Unless stated otherwise all maps between manifolds and all fiber bundles over manifolds are assumed to be smooth. If a manifold $M$ is assumed to be oriented, then $-M$ denotes the same manifold with the reverse orientation. If not stated otherwise, we use (co-)homology with coefficients in $\Z$.
	
	\subsection{Plumbing}
	\label{S:PLUMBING}
	In this section we briefly recall the plumbing construction. It was introduced by Milnor \cite{Mi58} to construct manifolds with prescribed intersection form. We also refer to \cite{HM68}, \cite[Section 5]{Br72} and \cite[Section 2]{CW17} for further details on plumbing.
	
	Let $D^q\hookrightarrow \overline{E}_1\xrightarrow{\pi_1} B_1^p$ and $D^p\hookrightarrow \overline{E}_2\xrightarrow{\pi_2} B_2^q$ be oriented linear disc bundles over oriented and connected manifolds $B_i$ such that fibers, base and total space are oriented compatibly. Let $D_1^p\hookrightarrow \intr(B_1)$, $D_2^q\hookrightarrow \intr(B_2)$ be orientation preserving embeddings. Since discs are contractible, we can identify $\pi_1^{-1}(D_1^p)\cong D_1^p\times D^q$ and $\pi_2^{-1}(D_2^q)\cong D_2^q\times D^p$. Now choose diffeomorphisms $\phi_1\colon D_1^p\to D^p$ and $\phi_2\colon D^q\to D_2^q$ that both preserve or both reverse the orientation and let $\overline{E}_1\square \overline{E}_2$ be the space obtained from $\overline{E}_1\sqcup \overline{E}_2$ by identifying $\pi_1^{-1}(D_1^p)$ and $\pi_2^{-1}(D_2^q)$ via the diffeomorphism
	\begin{align*}
		I\colon D_1^p\times D^q&\to D_2^q\times D^p\\
		(x,y)&\mapsto(\phi_2(y),\phi_1(x)).
	\end{align*}
	The space $\overline{E}_1\square \overline{E}_2$ has a manifold structure after smoothing out the corners which arise at the boundary of the identification area. It can now be shown that the diffeomorphism type of $\overline{E}_1\square \overline{E}_2$ only depends on whether the diffeomorphisms $\phi_i$ preserve or reverse the orientation. We say that $\overline{E}_1\square \overline{E}_2$ is the manifold obtained by plumbing $\overline{E}_1$ and $\overline{E}_2$ with sign $+1$ (sign $-1$) if both maps $\phi_i$ are orientation preserving (orientation reversing). Independent of the sign, the manifold $\overline{E}_1\square \overline{E}_2$ is oriented compatibly with $\overline{E}_1$ and $\overline{E}_2$ if $p$ or $q$ is even, and $\overline{E}_1\square \overline{E}_2$ is oriented compatibly with $\overline{E}_1$ and $-\overline{E}_2$ if $p$ and $q$ are odd.
	
	We can repeat the process of plumbing by choosing multiple embedded discs disjoint from each other. A manifold obtained by plumbing multiple disc bundles can then be characterized by a labeled graph, where we label each edge which either $+1$ or $-1$. Each vertex corresponds to a disc bundle and each edge corresponds to plumbing according to the sign the edge is labeled with. By collapsing the fibers we see that the manifold obtained in this way deformation retracts onto a sequence of one-point unions of the base manifolds, where we connect two base manifolds if the corresponding vertices are connected by an edge.
	
	We will be interested in the boundaries of plumbings. Let $E_i=\partial\overline{E}_i$ be the sphere bundle of the disc bundle. It is easily verified that
	\begin{equation}
		\label{EQ:PLUMB_BOUND}
		\partial(\overline{E}_1\square \overline{E}_2)\cong(E_1\setminus \pi_1^{-1}(D_1^p)^\circ)\cup_{I|_{S^{p-1}\times S^{q-1}}} (E_2\setminus\pi_2^{-1}(D_2^q)^\circ).
	\end{equation}
	In particular, if one of the $\overline{E}_i$, say $\overline{E}_2$, is the trivial bundle $S^q\times D^p$, then $\partial(\overline{E}_1\square \overline{E}_2)$ is obtained by surgery along a fiber sphere of $E_1$.
	
	The decomposition \eqref{EQ:PLUMB_BOUND} shows that the boundary of a plumbed manifold is the same manifold we obtain in Theorem \ref{T:GEN_SURG} by setting $M=E_1$ and $E=E_2$. Hence, by repeatedly applying Theorem \ref{T:GEN_SURG}, we can equip the boundary of a manifold obtained by multiple plumbings with a metric of positive Ricci curvature if we can make sure that the assumptions of Theorem \ref{T:GEN_SURG} are satisfied in each step. We will show that this is always possible if we plumb according to a simply-connected graph, see Section~\ref{S:THMS_BC}.
	
	\subsection{Metrics of Positive Ricci Curvature on Sphere Bundles}
	To construct manifolds of positive Ricci curvature on plumbed manifolds we will use the following special case of the so-called \emph{Vilms construction}:
	\begin{proposition}[{\cite[Theorem 9.59]{Be87}}]
		\label{P:VILMS}
		Let $E\xrightarrow{\pi} B$ be a linear sphere bundle over a Riemannian manifold $(B,g_B)$. Let $\theta$ be a connection on the principal $\mathrm{O}(p)$-bundle corresponding to $E$ and let $r>0$. Then there is precisely one metric $g_E(r,\theta)$ on $E$ such that the map $\pi$ is a Riemannian submersion with totally geodesic and round fibers of radius $r>0$ and horizontal distribution associated with $\theta$.
	\end{proposition}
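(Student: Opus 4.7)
The plan is to realize $E$ as the associated bundle $P\times_{\mathrm{O}(p)}S^{p-1}$, where $P$ is the principal $\mathrm{O}(p)$-bundle corresponding to $E$, and to build the metric from local trivializations, exploiting throughout that $\mathrm{O}(p)$ acts by isometries on the round sphere $S^{p-1}(r)$.

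Uniqueness is immediate from a pointwise argument: at each $e\in E$ the tangent space decomposes as $T_eE=\mathcal{V}_e\oplus\mathcal{H}_e$ with $\mathcal{V}_e=\ker d\pi_e$ intrinsic to the bundle and $\mathcal{H}_e$ prescribed by $\theta$. The submersion condition forces $\mathcal{V}_e\perp\mathcal{H}_e$ and forces $d\pi_e|_{\mathcal{H}_e}$ to be a linear isometry onto $T_{\pi(e)}B$, which determines $g_E|_{\mathcal{H}_e}$, while the round-fiber condition determines $g_E|_{\mathcal{V}_e}$. For existence, I would put the round metric of radius $r$ on $S^{p-1}$ and proceed by local trivializations. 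Over a trivializing open $U_\alpha$, identify $\pi^{-1}(U_\alpha)\cong U_\alpha\times S^{p-1}$; the connection $\theta$ supplies a horizontal distribution $\mathcal{H}$ complementary to the vertical tangent space $T_yS^{p-1}$. Declare $\mathcal{H}\perp T_yS^{p-1}$, equip the vertical summand with the round metric, and pull back $g_B$ to $\mathcal{H}$ via $d\pi$. Because $\theta$ is an $\mathrm{O}(p)$-connection and $\mathrm{O}(p)$ acts by isometries of $S^{p-1}(r)$, the transition functions preserve both the horizontal distribution and the fiber metric, so the local metrics agree on overlaps and patch to a global metric $g_E(r,\theta)$. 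By construction $\pi$ is a Riemannian submersion with the prescribed horizontal distribution and round fibers of radius $r$.

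The main obstacle is verifying that the fibers are totally geodesic, equivalently that the O'Neill tensor $T_VW=\mathcal{H}(\nabla_VW)$ vanishes on vertical vector fields $V,W$. I would argue this in a local trivialization $U\times S^{p-1}$ by choosing vertical extensions of $V$ and $W$ that are constant in the $U$-direction, so that $g_E(V,W)$ depends only on the $S^{p-1}$-coordinate. Expanding $2g_E(\nabla_VW,X)$ via the Koszul formula for $X$ horizontal, the terms $Vg_E(W,X)$ and $Wg_E(V,X)$ vanish by the orthogonality $\mathcal{V}\perp\mathcal{H}$, the bracket $[V,W]$ is vertical and hence orthogonal to $X$, and the remaining contributions cancel by a direct computation using the explicit form of the horizontal distribution determined by $\theta$ together with the fact that the $\mathrm{O}(p)$-action preserves the fiber metric. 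This is the standard computation at the heart of the Vilms construction; it yields $T\equiv 0$ and hence the totally geodesic property.
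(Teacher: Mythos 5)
Your proposal is correct: the paper gives no proof of this proposition, quoting it directly from Besse (Theorem 9.59), and your argument is exactly the standard Vilms construction underlying that citation --- uniqueness pointwise from the orthogonal splitting $T_eE=\mathcal{V}_e\oplus\mathcal{H}_e$, existence from the $\mathrm{O}(p)$-invariance of the round fiber metric, and totally geodesic fibers because the remaining Koszul terms assemble into the Lie derivative of the fiber metric along a basic horizontal field, which vanishes since the local horizontal lift differs from the coordinate lift by a fundamental (Killing) field of the $\mathrm{O}(p)$-action. No gaps worth flagging beyond the (correctly identified and routine) final cancellation computation.
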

	The Ricci curvatures of the metric $g_E(r,\theta)$ are given in \cite[Theorem 9.70]{Be87}. From the formulas given there one sees that the Ricci curvatures are close to those of $B$ and the fiber $S^{p-1}$ if the radius $r$ of the fibers is small. Hence, we obtain the following.
	\begin{proposition}[{\cite[Theorem 9.70]{Be87}}]
		\label{P:VILMS_RICCI}
		Let $E\xrightarrow{\pi} B$ be a linear sphere bundle over a Riemannian manifold $(B,g_B)$ with a connection $\theta$ on the corresponding principal $\mathrm{O}(p)$-bundle. If $B$ is compact and the metric $g_B$ has positive Ricci curvature, then there is a constant $r_0>0$ such that the metric $g_E(r,\theta)$ has positive Ricci curvature for all $r\in(0,r_0)$.
	\end{proposition}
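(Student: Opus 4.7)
The plan is to apply the formulas for the Ricci curvature of a Riemannian submersion with totally geodesic fibers, specifically those collected in~\cite[Theorem 9.70]{Be87}, and to track their dependence on the fiber radius $r$. In the Vilms construction the base metric $g_B$ and the horizontal distribution determined by $\theta$ are fixed; only the fiber metric varies, being the round metric of radius $r$ on $S^{p-1}$. Consequently, horizontal lengths in $g_E(r,\theta)$ are independent of $r$, while vertical lengths scale linearly in $r$.

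I would analyze the Ricci tensor of $g_E(r,\theta)$ separately on the three blocks induced by the orthogonal splitting $TE=\mathcal{H}\oplus\mathcal{V}$. For horizontal unit vectors $X$, the O'Neill formula expresses $\Ric(X,X)$ as the pullback $\Ric_{g_B}(\pi_*X,\pi_*X)$ minus a correction quadratic in the A-tensor of the submersion; because the horizontal distribution is fixed but vertical norms scale by $r$, this correction is of order $r^2$. Compactness of $B$ together with $\Ric_{g_B}>0$ gives a uniform positive lower bound for the base term, so $\Ric(X,X)$ remains uniformly positive for $r$ small. For vertical unit vectors $V$, the formula yields the fiber Ricci contribution $(p-2)/r^2$, coming from the round sphere of radius $r$, plus a bounded A-tensor term; thus $\Ric(V,V)$ diverges positively as $r\to 0$. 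The mixed expression $\Ric(X,V)$ is governed by a covariant derivative of $A$, which stays bounded on the compact manifold $E$ as $r\to 0$.

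To upgrade the diagonal estimates to positive definiteness of the full Ricci bilinear form, I would use a block-matrix argument. In a local adapted orthonormal frame, the Ricci quadratic form of $g_E(r,\theta)$ decomposes as a horizontal block converging to $\Ric_{g_B}\circ\pi$, a vertical block of size $r^{-2}$ times a positive definite matrix, and bounded off-diagonal entries; a Schur complement computation then shows that for $r$ sufficiently small the full matrix is positive definite, since the dominant $r^{-2}$ term in the vertical block absorbs any off-diagonal cross-terms. Compactness of the unit tangent bundle of $E$ yields a single uniform threshold $r_0>0$ valid at every point.

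The main obstacle I anticipate is careful bookkeeping of the $r$-scaling in the formulas of~\cite[Theorem 9.70]{Be87}: the A-tensor and its covariant derivatives are defined intrinsically in the $r$-dependent metric while the underlying principal connection is fixed, so one must translate between these two viewpoints before reading off the asymptotic behavior. Once that scaling is in place, the proposition follows by the dominance of the fiber curvature term $(p-2)/r^2$.
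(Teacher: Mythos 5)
Your proposal is correct and is essentially the paper's argument: the paper simply cites the canonical-variation formulas of \cite[Theorem 9.70]{Be87} and observes that for small $r$ the Ricci curvatures of $g_E(r,\theta)$ approach those of the base and of the round fiber, with the fiber term of order $r^{-2}$ dominating and compactness giving a uniform $r_0$. Your more detailed bookkeeping of the $r$-scaling of the A-tensor terms and the Schur-complement step is just an explicit write-out of that same standard argument (note only that, as your $(p-2)/r^2$ term shows, the fiber dimension must be at least $2$, which is the case in all applications in the paper).
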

	
	\section{Proof of Theorem \ref{T:GEN_SURG}}
	\label{S:THM_A}
	
	In this section we prove Theorem \ref{T:GEN_SURG}. First we decompose the manifold $\hat{M}$ as follows:
	\begin{equation}
		\label{EQ:GLUE_DEC}
		\hat{M}\cong \pi^{-1}(B\setminus \varphi(D^p)^\circ)\cup_{S^{p-1}\times S^{q-1}}(I\times S^{p-1}\times S^{q-1})\cup_{S^{p-1}\times S^{q-1}}(M\setminus\mathrm{im}(\iota)^\circ),
	\end{equation} 
	where $I$ is a closed interval. The strategy is to define a suitable metric of positive Ricci curvature on each part and then glue them using the following theorem due to Perelman \cite{Pe97}.
	\begin{theorem}[{\cite[Section 4]{Pe97}, \cite[Theorem 2]{BWW19}}]
		\label{T:PER_GLUING}
		Let $M_1, M_2$ be Riemannian manifolds of positive Ricci curvature. Suppose that there is an isometry $\phi\colon \partial_c M_1\to \partial_c M_2$ between two boundary components $\partial_c M_1\subseteq \partial M_1$ and $\partial_c M_2\subseteq\partial M_2$ and that the sum of the second fundamental forms $\II_{\partial_c M_1}+\phi^*\II_{\partial_c M_2}$ is positive semi-definite. Then there is a metric of positive Ricci curvature on the manifold $M_1\cup_\phi M_2$ that agrees with the original metrics on $M_1$ and $M_2$ outside an arbitrarily small neighborhood of the gluing area.
	\end{theorem}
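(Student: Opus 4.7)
The plan is to reduce the gluing problem to a local construction in a collar neighborhood of the identified seam $\Sigma := \phi(\partial_c M_1) = \partial_c M_2$, where the task becomes producing a smooth family of metrics in Fermi coordinates whose Ricci curvature stays positive.

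First I would introduce Fermi coordinates along each boundary component: on a small two-sided tubular neighborhood of $\Sigma$, the two metrics take the form $g_i = dt^2 + h_i(t)$ with $h_i(t)$ a smooth one-parameter family of metrics on $\Sigma$, and $-\tfrac{1}{2} h_i'(0)$ equal to the shape operator $\II_i$ (with the appropriate sign convention dictated by the inward normal and the isometry $\phi$). Identifying the two collars via $\phi$ and setting $t > 0$ on the $M_2$-side, the naive glue
\[ h(t)=\begin{cases} h_1(-t), & t\le 0,\\ h_2(t), & t\ge 0,\end{cases} \]
yields a $C^0$ metric $dt^2+h(t)$ on $(-\epsilon,\epsilon)\times\Sigma$ with $h'(0^+)-h'(0^-) = -2(\II_2+\phi^*\II_1)$, which is negative semi-definite by hypothesis; equivalently, $h$ is ``concave across the seam.''

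Next I would smooth $h$ in a window $|t|<\delta$ by convolving $h'$ with a fixed symmetric mollifier of width $\delta$ and integrating to obtain $\tilde h$. Positive Ricci curvature for the resulting metric $dt^2+\tilde h(t)$ is checked via the standard submersion formulas,
\[ \Ric(\partial_t,\partial_t) = -\tfrac{1}{2}\operatorname{tr}_{\tilde h}(\tilde h'')+\tfrac{1}{4}\operatorname{tr}_{\tilde h}(\tilde h'\tilde h^{-1}\tilde h'), \]
with analogous expressions for horizontal and mixed components interpolating between the intrinsic Ricci tensor of $\Sigma$ and a quadratic form in $\tilde h'$. Outside the window the metric is unchanged, so positive Ricci is inherited from $g_i$. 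Inside the window, the concavity across the seam means the mollified $-\tilde h''$ contributes a large positive matrix of order $\delta^{-1}$ times the jump, while the bounded quadratic term in $\tilde h'$ and the intrinsic Ricci curvature of $\Sigma$ are $O(1)$; shrinking $\delta$ therefore secures $\Ric>0$ in the seam region in the strictly positive case.

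The main obstacle, and the step I expect to demand the most care, is the degenerate case when $\II_1+\phi^*\II_2$ is only semi-definite: then the convexity gain from $-\tilde h''$ may not dominate the $|\tilde h'|^2$-type terms, and naive mollification is insufficient. Following Perelman, I would first perform a small ``bending'' of each side before mollifying: reparametrize a thin collar of each $M_i$ so that its boundary in the new parametrization acquires a strictly positive additional contribution to $\II_i$ of order $\eta>0$, at the cost of a localized curvature perturbation of order $\eta$ that, by the asymmetric nature of the bending (it curves one family of normal lines without touching the intrinsic metric of $\Sigma$), is positive in Ricci and vanishes away from the collar. After this bending the new sum of second fundamental forms becomes $\II_1+\phi^*\II_2+2\eta\,\mathrm{Id}>0$ strictly, and the mollification argument above applies. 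Choosing first $\eta$ small enough to preserve positive Ricci of each bent $g_i$, and then $\delta\ll\eta$ small enough to preserve positive Ricci through the mollification, yields the desired metric on $M_1\cup_\phi M_2$ that coincides with the originals outside an arbitrarily small neighborhood of $\Sigma$.
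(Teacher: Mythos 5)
Your proposal is essentially correct, but it is worth pointing out that the paper does not prove this statement at all: the theorem is quoted from Perelman and from \cite[Theorem 2]{BWW19}, and the only argument the paper itself supplies is the one-line reduction of the semi-definite case to the definite case by perturbing the metrics near the boundary to make the second fundamental form strictly larger (citing Burdick for that perturbation). Your reconstruction therefore does more than the paper: the Fermi-coordinate normal form $dt^2+h_i(t)$, the observation that the naive glue is $C^0$ with a jump in $h'$ whose sign is governed by $\II_1+\phi^*\II_2$, and the mollification of $h'$ producing a $-\tilde h''$ term of size $\delta^{-1}$ times the jump that dominates the bounded intrinsic and quadratic terms, is precisely the standard proof found in the cited references, and your ``bending'' step is exactly the paper's remark reducing semi-definite to definite. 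Two small caveats. First, your claim that the bending perturbation ``is positive in Ricci'' by virtue of its asymmetry is not right: any compactly supported collar deformation that increases $\II$ at $t=0$ must turn around inside the collar, and where the relevant cutoff has negative second derivative the perturbation to the tangential Ricci curvatures is negative; the correct (and sufficient) statement is merely that the perturbation is $O(\eta)$ for a fixed collar width, hence harmless against the strictly positive Ricci lower bound on the compact collar --- a fallback you do also invoke. Second, for the quadratic form $\Ric$ to be positive one must also control the mixed terms $\Ric(\partial_t,X)$, which involve tangential derivatives of $\tilde h'$ and stay bounded under mollification in $t$; you should say this explicitly, since positivity of the diagonal entries alone does not suffice, though the $\delta^{-1}$ size of the diagonal entries versus the $O(1)$ off-diagonal ones closes the argument.
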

	The theorem is originally stated with the assumption that the sum of second fundamental forms is positive definite. However, if this sum is merely positive semi-definite, we can perturb the metrics near the boundary slightly to increase the second fundamental form while keeping the curvature bounds and the induced metric on the boundary, see e.g.\ \cite[Proof of Proposition 1.2.11]{Bu19a}.

	If the metrics in Theorem \ref{T:PER_GLUING} are warped product metrics, then, by observing that the metric we obtain after gluing is again a warped product metric, we obtain the following special case.
	\begin{corollary}
	\label{C:WARP_GLUING}
		Let $J$ be an interval and let $(M_1,g_1),\dots,(M_k,g_k)$ be Riemannian manifolds. Let $f_1,\dots,f_k\colon J\to \R_{>0}$ be continuous functions which are smooth on $J\setminus\{x_1,\dots, x_l\}$, where $x_1,\dots,x_l\in J$ are interior points. If the metric
		\[g=dt^2+f_1(t) g_1 +\dots+f_k(t) g_k\]
		on $J\times M_1\times\dots\times M_k$ has positive Ricci curvature for all $t\in J\setminus\{x_1,\dots, x_l\}$ and if
		\[f_{i-}^\prime(x_j)\geq f_{i+}^\prime(x_j) \]
		for all $i,j$, then we can smooth the functions $f_1,\dots,f_k$ on an arbitrarily small neighborhood of each $x_j$ such that the resulting metric has positive Ricci curvature.
	\end{corollary}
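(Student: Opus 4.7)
My plan is to view each non-smooth time $x_j$ as a gluing interface and apply Perelman's gluing theorem (Theorem \ref{T:PER_GLUING}) there. Since the singular points $x_1,\dots,x_l$ are interior and discrete, I can work in disjoint neighborhoods and handle them one at a time; so fix some $x=x_j$ and split $J=J^-\cup J^+$ with $J^\pm=J\cap\{\pm(t-x)\geq 0\}$. The manifolds $(J^\pm\times M_1\times\dots\times M_k,\,dt^2+\sum_i f_i(t)g_i)$ carry smooth warped product metrics of positive Ricci curvature on $J^\pm\setminus\{x\}$ (by hypothesis), and the common boundary hypersurface $\{x\}\times M_1\times\dots\times M_k$ inherits from both sides the same induced metric $\sum_i f_i(x)g_i$ thanks to continuity of the $f_i$; hence the identity is the required boundary isometry in Perelman's theorem.

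The main computation is the second fundamental form. Using Koszul for $X,Y$ tangent to $M_i$ one gets $\langle\nabla_X Y,\partial_t\rangle=-\tfrac{1}{2}f'_i(t)g_i(X,Y)$, and the mixed contribution for $X$ tangent to $M_i$, $Y$ tangent to $M_{i'}$ ($i\neq i'$) vanishes. Taking the inward-pointing normals $-\partial_t$ on the $J^-$ side and $+\partial_t$ on the $J^+$ side, the two second fundamental forms at $\{x\}\times M_1\times\dots\times M_k$ are
\[
\II_{\pm}=\mp\tfrac{1}{2}\sum_i f'_{i\pm}(x)\,g_i,
\]
so
\[
\II_- + \II_+ = \tfrac{1}{2}\sum_i\bigl(f'_{i-}(x)-f'_{i+}(x)\bigr)g_i,
\]
which is positive semi-definite precisely by the hypothesis $f'_{i-}(x_j)\geq f'_{i+}(x_j)$. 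Perelman's theorem then produces a smooth metric of positive Ricci curvature on $J\times M_1\times\dots\times M_k$ agreeing with the original warped product outside an arbitrarily small neighborhood of $\{x\}\times M_1\times\dots\times M_k$. Repeating this at each $x_j$ in disjoint neighborhoods proves the existence of a positive Ricci metric.

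The step I expect to be the main obstacle is verifying that Perelman's perturbation can be chosen to respect the warped product structure, so that the conclusion is really a smoothing of the $f_i$ and not merely a smoothing of the full metric. The cleanest way to secure this is to note that all the data in the gluing region (induced metric, second fundamental forms, ambient Ricci curvatures) are invariant under the product isometry group $\mathrm{Isom}(g_1)\times\dots\times\mathrm{Isom}(g_k)$ acting on the factor $M_1\times\dots\times M_k$, and Perelman's construction (which depends only on the geodesic flow normal to the interface and on the shape operator) can be carried out equivariantly with respect to this action; the resulting metric must therefore again be of the form $dt^2+\sum_i \tilde f_i(t)g_i$ for smooth positive functions $\tilde f_i$ with $\tilde f_i=f_i$ outside a small neighborhood of $x_j$. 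Alternatively, one may prove this directly without invoking Perelman: given the strict sign in the jump $f'_{i-}(x)>f'_{i+}(x)$ one can explicitly smooth each $f_i$ by a small concave bump near $x$, and check from the standard warped-product Ricci formulas (e.g.\ $\Ric(\partial_t,\partial_t)$ contains a favorable $-\tfrac{1}{2}f_i''/f_i$ contribution) that positivity persists after the smoothing; the equality case is handled by the standard perturbation trick already invoked in the paragraph after Theorem \ref{T:PER_GLUING}.
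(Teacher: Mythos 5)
Your main route coincides with the paper's: Corollary \ref{C:WARP_GLUING} is obtained there as an immediate special case of Theorem \ref{T:PER_GLUING} by treating each slice $\{x_j\}\times M_1\times\dots\times M_k$ as a gluing interface and observing that the glued metric is again a warped product, and your second fundamental form computation agrees (up to the choice of sign convention) with the paper's use of \eqref{EQ:II_WARP}, correctly identifying the jump condition $f_{i-}^\prime(x_j)\geq f_{i+}^\prime(x_j)$ with positive semi-definiteness of the sum of the two second fundamental forms. The one place where your write-up is weaker than you claim is the equivariance justification of the "stays a warped product" step: for arbitrary $(M_i,g_i)$ the group $\mathrm{Isom}(g_1)\times\dots\times\mathrm{Isom}(g_k)$ may well be trivial, and even when it is not, invariance under it forces a metric of the form $dt^2+\sum_i\tilde f_i(t)g_i$ only when each factor is homogeneous with (essentially) irreducible isotropy -- true for the round spheres appearing in the proof of Theorem \ref{T:GEN_SURG}, but not for the general statement of the corollary, so as stated this argument proves nothing in general. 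Your fallback is the right one and is essentially what a complete proof looks like: either check directly from the construction behind Theorem \ref{T:PER_GLUING} (as in \cite{BWW19}) that in this situation only the $t$-dependent coefficients are modified, or smooth each kinked $f_i$ explicitly by a $C^1$-interpolation whose second derivative is very negative on a tiny interval while the slopes sweep only the interval between the one-sided derivatives; positivity then persists because in the multiply warped analogues of \eqref{EQ:D_WARP_T}--\eqref{EQ:D_WARP_W} the second-derivative terms only improve, while the remaining terms are, for the other data fixed, affine or concave in each slope, so their minimum over the intermediate slope values is controlled by the one-sided limits, with the semi-definite (equality) case reduced to the strict one by the perturbation remark following Theorem \ref{T:PER_GLUING}.
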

	We note that the case $k=1$ in Corollary \ref{C:WARP_GLUING} had already been considered in \cite[Observation 1.5]{Wr98}.
	
	\subsection{The metrics on $\pi^{-1}(B\setminus \varphi(D^p)^\circ)$ and $M\setminus\mathrm{im}(\iota)^\circ$.}
	To obtain a metric on $E$ we first choose a connection on the principal $\mathrm{O}(q)$-bundle corresponding to $E$ which is flat on the embedded disc $\varphi(D^p)\subseteq B$. The metric $g_E(r,\theta)$ from Proposition \ref{P:VILMS}, which has positive Ricci curvature for $r>0$ sufficiently small by Proposition \ref{P:VILMS_RICCI}, is then, when restricted to $\varphi(D^p)$, a product of the form
	\[g_E(r,\theta)|_{\pi^{-1}(\varphi(D^p))}=g_B|_{\varphi(D^{p})}+r^2\cdot ds_{q-1}^2 \]
	and over the boundary $\varphi(S^{p-1})$ we have
	\[g_E|_{\pi^{-1}(\varphi(S^{p-1}))}=ds^2_{p-1}+r^2\cdot ds_{q-1}^2. \]
	Since the metric is a product over $\varphi(D^p)$, the second fundamental form on\linebreak $\pi^{-1}(\varphi(S^{p-1}))\cong S^{p-1}\times S^{q-1}$ with respect to this product structure is given by
	\begin{equation}
		\label{EQ:II_PHI}
		\II_{\pi^{-1}(\varphi(S^{p-1}))}=
		\begin{pmatrix}
			\mathrm{I\!I}_{\varphi(S^{p-1})}&0\\
			0&0
		\end{pmatrix}.
	\end{equation}
	Since $g_B$ is a core metric, we have $\mathrm{I\!I}_{\varphi(S^{p-1})}>0$. Hence, the smallest eigenvalue of $\II_{\varphi(S^{p-1})}$, which we denote by $\lambda$, is positive. Note that scaling the metric $g_E(r,\theta)$ by a factor $\alpha>0$ has the effect that $\lambda$ gets multiplied by $\alpha^{-1}$.
	
	By assumption, the metric on $\iota(S^{p-1}\times S^{q-1})\cong S^{p-1}\times S^{q-1}$ is the product metric
	\[g_M|_{\iota(S^{p-1}\times S^{q-1})}=\rho^2\cdot ds_{p-1}^2+N^2\sin^2\left(\frac{R}{N}\right)ds_{q-1}^2 \]
	and the second fundamental form with respect to this product structure is given by
	\begin{equation}
		\label{EQ:II_IOTA}
		\II_{\iota(S^{p-1}\times S^{q-1})}=
		\begin{pmatrix}
			0&0\\
			0&-\frac{1}{N}\cot\left(\frac{R}{N}\right)
		\end{pmatrix}.
	\end{equation}
	In general, the value $\frac{R}{N}$ can be very small, in which case $\II_{\iota(S^{p-1}\times S^{q-1})}$ is negative semi-definite. 

	\subsection{The structure of the metric on $I\times S^{p-1}\times S^{q-1}$.}
	We will equip the middle part of \eqref{EQ:GLUE_DEC} with a metric of positive Ricci curvature such that we can glue it to the other parts using Theorem \ref{T:PER_GLUING}. The metric will be a doubly warped product metric, i.e.\ it will be given by
	\[g_{f,h}=dt^2+h^2(t)ds_{p-1}^2+f^2(t)ds_{q-1}^2, \]
	where $f,h\colon\R_{\geq0}\to\R_{>0}$ are smooth functions. The second fundamental form of $g_{f,h}$ at a slice $t\geq0$ with respect to $\partial_t$ is given by
	\begin{equation}
		\label{EQ:II_WARP}
		\II_{\{t\}\times (S^{p-1}\times S^{q-1})}=
		\begin{pmatrix}
			\frac{h^\prime(t)}{h(t)} & 0\\
			0 & \frac{f^\prime(t)}{f(t)}
		\end{pmatrix}.
	\end{equation}
	Now, in order to glue according to the decomposition \eqref{EQ:GLUE_DEC} using Theorem \ref{T:PER_GLUING}, we impose the following boundary conditions:
	\begin{align}
		h(0)&=\alpha & f(0)&=\alpha r\label{EQ:BOUND_COND_0}\\
		h^\prime(0)&\leq\lambda & f^\prime(0) & \leq0\label{EQ:BOUND_COND_0'}\\
		h(t_0)&=\beta\rho & f(t_0)&=\beta N\sin\left(R/N\right)\label{EQ:BOUND_COND_t0}\\
		h^\prime(t_0)&\geq0 & f^\prime(t_0)&\geq\cos\left(R/N\right)\label{EQ:BOUND_COND_t0'}.
	\end{align}
	Here $\alpha,\beta, t_0>0$ can be chosen arbitrarily. Furthermore, in order to use Theorem \ref{T:PER_GLUING}, the metric $g_{f,h}$ needs to have positive Ricci curvature. For that, let $V\in TS^{p-1}$, $W\in TS^{q-1}$ be unit length vectors (with respect to the metric $g_{f,h}$). Then the Ricci curvatures are given as follows, see e.g.\ \cite[Proposition 4.2]{Wr07}:
	\begin{align}
		\Ric(\partial_t,\partial_t)&=-(p-1)\frac{h^{\prime\prime}}{h}-(q-1)\frac{f^{\prime\prime}}{f}\label{EQ:D_WARP_T},\\
		\Ric(V,V)&=-\frac{h^{\prime\prime}}{h}+(p-2)\frac{1-(h^\prime)^2}{h^2}-(q-1)\frac{h^\prime f^\prime}{hf}\label{EQ:D_WARP_V},\\
		\Ric(W,W)&=-\frac{f^{\prime\prime}}{f}+(q-2)\frac{1-(f^\prime)^2}{f^2}-(p-1)\frac{h^\prime f^\prime}{h f}\label{EQ:D_WARP_W},\\
		\Ric(\partial_t,V)&=\Ric(\partial_t,W)=\Ric(V,W)=0\nonumber.
	\end{align}
	Figure 1 contains a sketch of how the graph of such functions $h$ and $f$ would typically look like.
	\begin{lemma}
		\label{L:THM_A}
		If the functions $h$ and $f$ satisfy \eqref{EQ:BOUND_COND_0}--\eqref{EQ:BOUND_COND_t0'} and the Ricci curvatures \eqref{EQ:D_WARP_T}--\eqref{EQ:D_WARP_W} are positive, then the manifold $\hat{M}$ has a metric of positive Ricci curvature as claimed in Theorem \ref{T:GEN_SURG}.
	\end{lemma}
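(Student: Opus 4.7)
The plan is to glue the three pieces in the decomposition \eqref{EQ:GLUE_DEC} together via two applications of Theorem \ref{T:PER_GLUING}. I would equip $\pi^{-1}(B\setminus\varphi(D^p)^\circ)$ with the rescaled Vilms metric $\alpha^2 g_E(r,\theta)$, equip $M\setminus\mathrm{im}(\iota)^\circ$ with $\beta^2 g_M$, and equip the middle cylinder $[0,t_0]\times S^{p-1}\times S^{q-1}$ with the doubly warped product $g_{f,h}$. The first two metrics have positive Ricci curvature by Proposition \ref{P:VILMS_RICCI} and assumption (S1), respectively (together with scale invariance of positive Ricci), and the third does so by hypothesis.

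The main verification is that the hypotheses of Theorem \ref{T:PER_GLUING}---equality of induced metrics together with positive semi-definiteness of the sum of the second fundamental forms with outward normals---hold at both gluing seams. At the seam $t=t_0$, the induced metrics match by \eqref{EQ:BOUND_COND_t0}; the second fundamental form of the middle cylinder with outward normal $+\partial_t$ is given by \eqref{EQ:II_WARP} at $t_0$, while the rescaled $M$-side contributes the rescaled version of \eqref{EQ:II_IOTA}, and their sum is positive semi-definite precisely under \eqref{EQ:BOUND_COND_t0'}. At $t=0$, the analogous computation applies: \eqref{EQ:BOUND_COND_0} matches the induced metrics; the second fundamental form of the middle cylinder with outward normal $-\partial_t$ is the negative of \eqref{EQ:II_WARP} at $t=0$; the second fundamental form of the rescaled $E$-side, whose outward normal points into the removed cap $\varphi(D^p)$, equals $\alpha^{-1}$ times the matrix in \eqref{EQ:II_PHI} by the scaling observation recorded in the text; and the sum is positive semi-definite exactly under \eqref{EQ:BOUND_COND_0'}, where the positivity of $\II_{\varphi(S^{p-1})}$ with smallest eigenvalue $\lambda>0$ from the core-metric hypothesis enters decisively.

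Two successive applications of Theorem \ref{T:PER_GLUING} then yield a metric of positive Ricci curvature on $\hat{M}$ which, outside arbitrarily small neighborhoods of the two seams, coincides with the Vilms submersion metric (with totally geodesic round fibers) on the $E$-side and with a scalar multiple of $g_M$ on the $M$-side, as asserted in Theorem \ref{T:GEN_SURG}. The only delicate step is the correct identification of outward normals on the $E$-side: the outward normal of $\pi^{-1}(B\setminus\varphi(D^p)^\circ)$ along $\pi^{-1}(\varphi(S^{p-1}))$ coincides with the \emph{inward} normal of $\varphi(S^{p-1})\subset\varphi(D^p)$, so the core-metric convention that $\II_{\varphi(S^{p-1})}$ is positive definite with respect to this inward normal produces exactly the sign needed for the boundary conditions \eqref{EQ:BOUND_COND_0}--\eqref{EQ:BOUND_COND_t0'} to translate directly into the positive semi-definiteness required at both seams.
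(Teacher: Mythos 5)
Your proof is correct and follows essentially the same route as the paper: rescale the Vilms metric by $\alpha$ and $g_M$ by $\beta$, verify the isometry and second-fundamental-form conditions at both seams using \eqref{EQ:BOUND_COND_0}--\eqref{EQ:BOUND_COND_t0'} together with \eqref{EQ:II_PHI}, \eqref{EQ:II_IOTA}, \eqref{EQ:II_WARP}, and apply Theorem \ref{T:PER_GLUING} twice. The only (cosmetic) step you omit is the final rescaling of the glued metric by $\frac{1}{\alpha}$, which the paper uses so that the fibers on the $E$-side have radius exactly $r$ as stated in Theorem \ref{T:GEN_SURG}.
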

	\begin{proof}
		We scale the metric $g_E$ by $\alpha$, so $(\pi^{-1}(B\setminus \varphi(D^p)^\circ),\alpha^2g_E(r,\theta))$ and $(I\times S^{p-1}\times S^{q-1},g_{f,h})$ have an isometric boundary component by \eqref{EQ:BOUND_COND_0}. Scaling by $\alpha$ has the effect that the second fundamental form on $\varphi(S^{p-1})$ becomes bounded from below by $\frac{\lambda}{\alpha}$. Hence, by \eqref{EQ:BOUND_COND_0'}, \eqref{EQ:II_PHI} and \eqref{EQ:II_WARP} the requirements of Theorem \ref{T:PER_GLUING} are satisfied for this boundary component (note that we need to reverse the signs in \eqref{EQ:II_WARP} since $\partial_t$ is the inward normal vector on this boundary component). For the other boundary component we proceed similarly, i.e.\ we rescale the metric $g_M$ by $\beta$ so we have isometric boundary components by \eqref{EQ:BOUND_COND_t0}. Then by \eqref{EQ:BOUND_COND_t0'}, \eqref{EQ:II_IOTA} and \eqref{EQ:II_WARP} the requirements of Theorem \ref{T:PER_GLUING} are satisfied. Now we apply Theorem \ref{T:PER_GLUING} to glue according to the decomposition \eqref{EQ:GLUE_DEC} and rescale the resulting metric by $\frac{1}{\alpha}$.
	\end{proof}

		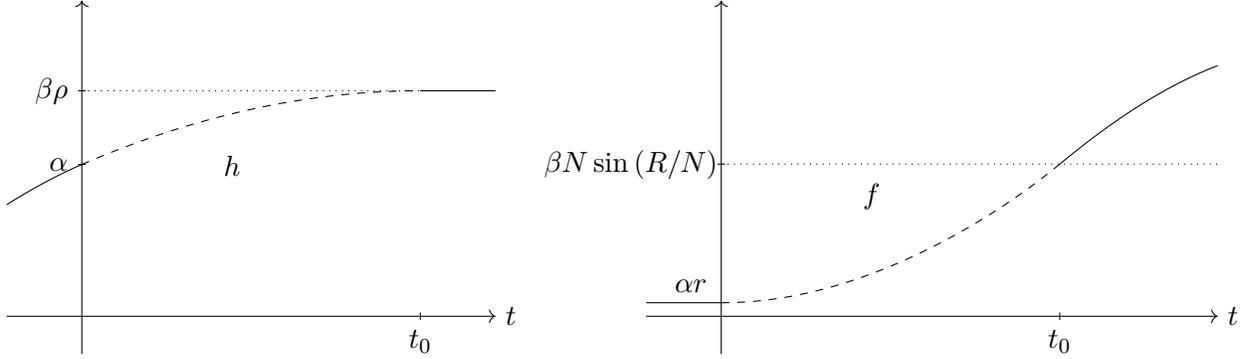
\begin{figure}
			\centering
			\begin{tikzpicture}[scale=0.775, transform shape]
				\draw[->] (-1, 0) -- (5.5,0) node[right] {$t$};
				\draw[->] (0, -0.5) -- (0,4.2) node[above] {};
				\draw (4.5,0.05) -- (4.5,-0.05) node[below] {$t_0$};
				\draw (-0.05,2.01953) -- (0.05,2.01953) node[left] {};
				\draw (-0.05,2.01953) node[left] {$\alpha$};
				\draw (-0.05,3) -- (0.05,3) node[left] {};
				\draw (-0.05,3) node[left] {$\beta\rho$};

				\node at (2,2) {$h$};
				\draw[scale=3, domain=-0.333:0, smooth, variable=\x, black] plot ({\x}, {sin(deg(\x+1))*0.8
				});
				\draw[scale=3, domain=0:1.5, smooth, variable=\x, black, dashed] plot ({\x}, {-0.00156553*\x*\x*\x-0.140558*\x*\x+cos(deg(1))*0.8*\x+sin(deg(1))*0.8
				});
				\draw[scale=3, domain=1.5:1.8333, smooth, variable=\x, black] plot ({\x}, {1
				});
				\draw[scale=3, domain=0:1.5, smooth, variable=\x, black, dotted] plot ({\x}, {1
				});
			\end{tikzpicture}
			\begin{tikzpicture}[scale=0.775, transform shape]
				\draw[->] (-1, 0) -- (6.6,0) node[right] {$t$};
				\draw[->] (0, -0.5) -- (0,4.2) node[above] {};
				\draw (4.5,0.05) -- (4.5,-0.05) node[below] {$t_0$};
				\draw (-0.05,0.18) node[above left] {$\alpha r$};
				\draw (-0.05,2.02466) -- (0.05,2.02466) node[left] {};
				\draw (0.09,2.02466) node[left] {$\beta N\sin\left(R/N\right)$};
				
				\node at (2,1.6) {$f$};
				\draw[scale=3, domain=0:1.5, smooth, variable=\x, black, dashed] plot ({\x}, {((7/720*\x*\x*\x*\x*\x*\x-1/24*\x*\x*\x*\x+1/2*\x*\x+1)-0.9)*0.6
				});
				\draw[scale=3, domain=1.5:2.2, smooth, variable=\x, black] plot ({\x}, {1.20448*sin(deg((\x-0.783619)/1.20448))
				});
				\draw[scale=3, domain=-0.333:0, smooth, variable=\x, black] plot ({\x}, {0.1*0.6
				});
				\draw[scale=3, domain=0:2.2, smooth, variable=\x, black, dotted] plot ({\x}, {0.674886
				});
			\end{tikzpicture}
			\caption{Sketch of the graph of the functions $h$ and $f$. The dashed lines correspond to the part on which we have to construct the functions $h$ and $f$ such that they satisfy the boundary conditions \eqref{EQ:BOUND_COND_0}--\eqref{EQ:BOUND_COND_t0'} and such that the Ricci curvatures \eqref{EQ:D_WARP_T}--\eqref{EQ:D_WARP_W} are positive.}
		\end{figure}
	
	\subsection{Construction of the functions $f$ and $h$.}
	
	In this subsection we construct functions $h$ and $f$ that satisfy the boundary conditions \eqref{EQ:BOUND_COND_0}--\eqref{EQ:BOUND_COND_t0'} and so that the Ricci curvatures \eqref{EQ:D_WARP_T}--\eqref{EQ:D_WARP_W} are positive. The construction follows a similar strategy as in \cite{SY91} and \cite{Wr98} by defining the functions as solutions of suitable differential equations. When compared to the construction in \cite{Wr98}, due to the more general setting in Theorem~\ref{T:GEN_SURG}, both the boundary conditions \eqref{EQ:BOUND_COND_0} and \eqref{EQ:BOUND_COND_0'} and the conditions on the numbers $p$ and $q$ are more general. As a consequence, we need to choose different differential equations to cover this more general situation.
	
	\begin{definition}
		Let $h_0\colon[0,\infty)\to\R$ be the unique smooth function satisfying
		\begin{align*}
			h_0^\prime&=e^{-\frac{1}{2}h_0^2},\\
			h_0(0)&=\sqrt{-2\ln\left(\min\left(\lambda,\frac{1}{2}\right)\right)}.
		\end{align*}
	\end{definition}
	We use the minimum of $\lambda$ and $\frac{1}{2}$ to cover the case $\lambda\geq1$, in which $-2\ln(\lambda)$ would be non-positive.
	\begin{lemma}
		We have
		\begin{enumerate}
			\item $h_0^\prime(0)=\min\left(\lambda,\frac{1}{2}\right)\leq\lambda$,
			\item $h_0,h_0^\prime>0$,
			\item $h_0^{\prime\prime}=-h_0e^{-h_0^2}<0$,
			\item $\lim_{t\to\infty}h_0(t)=\infty$.
		\end{enumerate}
	\end{lemma}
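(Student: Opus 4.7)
The plan is to verify the four claims in sequence, each following directly from the defining ODE and standard arguments.

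For (1), I substitute $t=0$ into the defining equation: $h_0'(0) = e^{-\frac{1}{2}h_0(0)^2} = e^{\ln(\min(\lambda,1/2))} = \min(\lambda,1/2) \leq \lambda$. For (2), the initial value $h_0(0)$ is positive since $\min(\lambda,1/2) < 1$ forces $-2\ln(\min(\lambda,1/2)) > 0$. Moreover, the ODE forces $h_0'(t) = e^{-\frac{1}{2}h_0(t)^2} > 0$ wherever the solution is defined, so $h_0$ is strictly increasing; combined with $h_0(0) > 0$ this gives $h_0 > 0$ throughout. For (3), differentiating the ODE by the chain rule gives $h_0'' = -h_0 h_0' e^{-\frac{1}{2}h_0^2} = -h_0 \bigl(e^{-\frac{1}{2}h_0^2}\bigr)^2 = -h_0 e^{-h_0^2}$, which is negative because $h_0 > 0$ by (2).

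For (4), I first need to know that $h_0$ is actually defined on all of $[0,\infty)$. This follows from the a priori bound $h_0'(t) = e^{-\frac{1}{2}h_0(t)^2} \leq 1$: the solution grows at most linearly, hence cannot blow up in finite time, and the standard ODE extension theorem gives a global solution. Next, by (2), $h_0$ is strictly increasing on $[0,\infty)$; therefore $L := \lim_{t\to\infty} h_0(t) \in (0,\infty]$ exists. If $L$ were finite, then $h_0'(t) \to e^{-L^2/2} =: c > 0$ as $t \to \infty$, so $h_0(t) \geq h_0(0) + \frac{c}{2} t$ for $t$ large, contradicting boundedness. Hence $L = \infty$.

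The only mildly delicate point is the global existence in (4), and this is handled instantly by the uniform bound $h_0' \leq 1$; all other items are direct substitutions or one-line differentiations. I expect no real obstacles here.
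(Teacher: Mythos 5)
Your proof is correct and follows essentially the same route as the paper: items (1)--(3) are direct substitutions into the defining ODE, and (4) is the same contradiction argument, namely that if $h_0$ converged to a finite limit $L$ then the ODE would force $h_0'$ to tend to $e^{-\frac{1}{2}L^2}>0$, which is incompatible with boundedness. Your extra remark on global existence via the bound $h_0'\leq 1$ is a welcome detail that the paper leaves implicit, but it does not change the argument.
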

	\begin{proof}
		We show that $\lim_{t\to\infty}h_0(t)=\infty$, the remaining statements are easily verified. Since $h_0^\prime>0$, the function $h_0$ converges to a limit $L\in(0,\infty]$. Suppose $L<\infty$. Then $\lim_{t\to\infty}h_0^\prime(t)=0$. By the definition of $h_0$ we have $\lim_{t\to\infty} h_0^\prime(t)=e^{-\frac{1}{2}L^2}>0$, which is a contradiction. Hence $L=\infty$.
	\end{proof}
	\begin{definition}
		For $C\in(0,1)$ let $f_C\colon[0,\infty)\to\R$ be the unique smooth function satisfying
		\begin{align*}
			f_C^{\prime\prime}&=C e^{-h_0^2}f_C,\\
			f_C(0)&=1,\\
			f^\prime_C(0)&=0.
		\end{align*}
	\end{definition}
	\begin{lemma}
		\label{L:FC_PROPERTIES}
		We have
		\begin{enumerate}
			\item $f_C,f_C^\prime,f_C^{\prime\prime}>0$ on $(0,\infty)$,
			\item $\lim_{t\to\infty}f_C(t)=\lim_{t\to\infty}f_C^\prime(t)=\infty$,
			\item $\lim_{t\to\infty}f_C(t)h_0^\prime(t)=0$,
			\item $\frac{f^\prime_C}{f_C h_0 h_0^\prime}\in[0,1]$.
		\end{enumerate}
	\end{lemma}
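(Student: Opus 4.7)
The plan is to prove (1), (4), (3), (2) in that order, as the later items depend on the earlier ones.

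For (1), a linear ODE bootstrap suffices. From $f_C(0)=1>0$ and $f_C''(0)=C>0$, together with the relation $\operatorname{sign}(f_C'')=\operatorname{sign}(f_C)$ coming from $f_C''=Ce^{-h_0^2}f_C$, I would argue: if $T:=\sup\{t\geq 0:f_C>0\text{ on }[0,t]\}$ were finite, then on $[0,T)$ we would have $f_C''>0$, so $f_C'$ strictly increases from $0$, and hence $f_C\geq 1$ on $[0,T)$; by continuity $f_C(T)\geq 1$, contradicting $f_C(T)=0$. So $T=\infty$, and all three inequalities in (1) follow. Convexity then gives the easy half of (2): once $f_C'(t_1)>0$ for some $t_1>0$, $f_C(t)\geq f_C(t_1)+f_C'(t_1)(t-t_1)\to\infty$.

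For (4), I would compare the logarithmic derivatives $\phi:=f_C'/f_C$ and $\psi:=h_0 h_0'$. A short computation (using $h_0''=-h_0(h_0')^2$) produces the twin Riccati identities
\[
\phi'+\phi^2 \;=\; C(h_0')^2, \qquad \psi'+\psi^2 \;=\; (h_0')^2,
\]
so at any point where $\phi=\psi$, subtraction yields $(\psi-\phi)'=(1-C)(h_0')^2>0$, using $C<1$. Combined with $\phi(0)=0<\psi(0)$, a standard first-crossing argument rules out $\phi>\psi$ anywhere, which is the upper bound in (4); the lower bound is immediate from (1).

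The core work lies in (3) and the remaining half of (2). Set $w:=\psi-\phi\geq 0$. One computes $w'=(1-C)(h_0')^2-2h_0h_0'w+w^2$, and multiplication by the integrating factor $e^{h_0^2}$ (using $e^{h_0^2}(h_0')^2=1$) gives
\[
(e^{h_0^2}w)' \;=\; (1-C)+e^{h_0^2}w^2 \;\geq\; 1-C,
\]
so $w(t)\geq(1-C)\,t\,e^{-h_0(t)^2}$. Since $(\ln f_C h_0')'=\phi+h_0''/h_0'=\phi-\psi=-w$, one has $f_C(t)h_0'(t)=f_C(0)h_0'(0)\exp\bigl(-\int_0^t w\,ds\bigr)$, and (3) reduces to $\int_0^\infty w\,ds=\infty$. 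I would establish this divergence via the substitution $u=h_0(s)$: the integral $\int_0^\infty s\,e^{-h_0^2(s)}\,ds$ becomes $\int_{u_0}^\infty s(u)\,e^{-u^2/2}\,du$ where $s(u)=\int_{u_0}^u e^{v^2/2}\,dv$, and a single integration by parts yields $s(u)\geq e^{u^2/2}/u-\text{const}$, hence $s(u)e^{-u^2/2}\geq 1/(2u)$ for large $u$, so $\int du/u$ gives the divergence. For the second half of (2), integrating the ODE gives $f_C'(t)=C\int_0^t(h_0'(s))^2f_C(s)\,ds$, and the linear lower bound $f_C(s)\geq a(s-t_1)$ for $s\geq t_1$ reduces $f_C'(t)\to\infty$ to the same estimate $\int s\,e^{-h_0^2(s)}\,ds=\infty$. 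The main obstacle is precisely this last asymptotic step: extracting divergence of $\int t\,e^{-h_0^2(t)}\,dt$ despite $h_0$ growing only like $\sqrt{2\ln t}$, which the Mills-ratio-style integration by parts handles cleanly.
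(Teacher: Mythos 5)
Your argument is correct, but for items (2) and (3) it takes a genuinely different route from the paper. The paper substitutes $s=h_0(t)$ throughout and studies second-order ODEs in $s$: for (2) it shows $v(s)=f_C'(h_0^{-1}(s))$ solves $v''+sv'-Cv=0$ and deduces $v\to\infty$ by a sign-persistence-of-derivatives argument and a contradiction via $sv'(s)\to CL$; for (3) it shows $y(s)=e^{-s^2/2}f_C(h_0^{-1}(s))$ solves $y''=-sy'+(C-1)y$ and argues qualitatively that $y\to 0$. You instead work with the first-order (Riccati) quantities $\phi=f_C'/f_C$ and $\psi=h_0h_0'$ directly in $t$: the identities $\phi'+\phi^2=C(h_0')^2$ and $\psi'+\psi^2=(h_0')^2$ are correct, the integrating factor computation $(e^{h_0^2}w)'=(1-C)+e^{h_0^2}w^2$ (using $e^{h_0^2}(h_0')^2=1$) is correct, and the exact formula $f_C h_0'=f_C(0)h_0'(0)\exp\bigl(-\int_0^t w\bigr)$ reduces (3) to $\int_0^\infty t\,e^{-h_0(t)^2}\,dt=\infty$, which your Mills-ratio integration by parts (giving $s(u)e^{-u^2/2}\geq \tfrac{1}{2u}$ for large $u$) establishes; the same divergence feeds the integral representation $f_C'(t)=C\int_0^t(h_0')^2f_C$ for the hard half of (2). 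Your (4) is essentially the paper's argument in the original variable: the paper's function $b(s)$ is exactly $\phi/\psi$ at $t=h_0^{-1}(s)$, and both proofs are first-crossing arguments relying on $C<1$; item (1) coincides with the paper (your phrase ``contradicting $f_C(T)=0$'' should more precisely be ``contradicting the maximality of $T$,'' a cosmetic point). What your route buys is a quantitative lower bound $w(t)\geq(1-C)te^{-h_0(t)^2}$ and an explicit asymptotic mechanism for why $f_Ch_0'\to 0$ and $f_C'\to\infty$, at the cost of the borderline estimate on $\int t\,e^{-h_0^2}$, whereas the paper's change of variables turns everything into soft limit arguments for linear second-order equations with no asymptotic analysis of $h_0^{-1}$.
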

	\begin{proof}
		(1) By definition we have $f_C^{\prime\prime}(0)>0$ and hence $f_C^\prime(t)>0$ for small $t$. Now suppose there is $t>0$ such that $f_C^{\prime\prime}(t)=0$. Let $t_0$ be the smallest such $t$, which is positive by the initial conditions. The equation $f_C^{\prime\prime}(t_0)=0$ implies $f_C(t_0)=0$, hence there is $t_1\in(0,t_0)$ such that $f_C^{\prime\prime}(t_1)=0$ since otherwise $f_C^{\prime\prime}$, and therefore also $f^\prime_C$, would be positive on $(0,t_0)$, resulting in a positive value for $f_C(t_0)$. This contradicts the minimality of $t_1$, hence $f_C^{\prime\prime}>0$, and $f_C,f_C^\prime>0$ then follows from the initial conditions.
		
		\smallskip	
		\noindent(2) We have $\lim_{t\to\infty}f_C(t)=\infty$ since $f_C^\prime,f_C^{\prime\prime}>0$. Now set $u=f_C^\prime$. A calculation shows that $u$ satisfies the differential equation
		\[u^{\prime\prime}+2h_0 e^{-\frac{1}{2}h_0^2}u^\prime-Ce^{-h_0^2}u=0. \]
		
		The function $h_0$ is monotone increasing, hence it has an inverse $h_0^{-1}\colon [s_0,\infty)\to[0,\infty)$, where $s_0=h_0(0)>0$. We now define $v\colon [s_0,\infty)\to[0,\infty)$ by
		\[v(s)=u(h_0^{-1}(s)). \]
		Then $v$ satisfies the differential equation
		\begin{equation}
			\label{EQ:DIFF_EQN_V}
			v^{\prime\prime}(s)+s v^\prime(s)-C v(s)=0.
		\end{equation}
		Further, the initial values for $v$ are $v(s_0)=0$ and $v^\prime(s_0)=Ce^{-\frac{1}{2}s_0^2}>0$. It follows that $v^{\prime\prime}(s_0)=-Cs_0e^{-\frac{1}{2}s_0^2}<0$.
		
		Suppose there is $s>s_0$ such that $v^\prime(s)=0$ and let $s_1$ be the smallest such $s$. Since $v^\prime(s)>0$ for $s\in[s_0,s_1)$ and $v(s_0)=0$, it follows that $v(s_1)>0$ and hence $v^{\prime\prime}(s_1)=Cv(s_1)>0$. But $v^\prime(s)>0$ for $s\in[s_0,s_1)$ and $v^\prime(s_1)=0$, so $v^{\prime\prime}(s_1)\leq0$, which is a contradiction. Hence $v^\prime>0$.
		
		By induction we have
		\[v^{(k+2)}(s)+sv^{(k+1)}(s)-(C-k)v^{(k)}(s)=0 \]
		for all $k\in\N_0$ and similarly as above we can now show inductively that $v^{(k+1)}$ does not change sign outside a compact set. Indeed, if $v^{(k+1)}(s)=0$ then $v^{(k+2)}(s)=(C-k)v^{(k)}(s)$, so $v^{(k+2)}$ does not change sign on the zeroes of $v^{(k+1)}$ outside a compact set. Hence, the function $v^{(k+1)}$ can change sign at most once outside this compact set.
		As a consequence, all $v^{(k)}$ converge to a limit. Let $L=\lim\limits_{s\to\infty}v(s)\in(0,\infty]$ and suppose $L<\infty$. Then we have
		\[\lim\limits_{s\to\infty}v^\prime(s)=\lim\limits_{s\to\infty}v^{\prime\prime}(s)=0 \]
		and by \eqref{EQ:DIFF_EQN_V} it follows that
		\[\lim\limits_{s\to\infty}sv^\prime(s)=CL. \]
		In particular, there is $s_1>s_0$ such that $sv^\prime(s)>\frac{CL}{2}$ for all $s>s_1$. Hence, for $s\geq s_1$,
		\[v(s)=v(s_1)+\int_{s_1}^{s}v^\prime(r)dr>\int_{s_1}^{s}\frac{CL}{2r}dr=\frac{CL}{2}(\ln(s)-\ln(s_1))\longrightarrow\infty \]
		as $s\to\infty$, which is a contradiction. It follows that $L=\infty$ and therefore also $\lim\limits_{s\to\infty}u(s)=\infty$.
		
		\smallskip
		\noindent(3) We define the function $y\colon [s_0,\infty)\to\R$ by
		\[y(s)=e^{-\frac{1}{2}s^2}f_C(h^{-1}_0(s))\]
		and we need to show that $y$ converges to $0$ as $s\to\infty$. A calculation shows that $y$ satisfies
		\[y^{\prime\prime}(s)=-sy^\prime(s)+(C-1)y(s) \]
		with $y(s_0)=e^{-\frac{1}{2}s_0^2}>0$ and $y^\prime(s_0)=-s_0e^{-\frac{1}{2}s_0^2}<0$. By definition of $y$ we have $y>0$ and similarly as before we conclude that $y^\prime<0$ and $y$ and all its derivatives converge. Since $y>0$ and $y^\prime<0$, the limit $L$ of $y$ is finite and non-negative. In particular, both $y^\prime$ and $y^{\prime\prime}$ converge to $0$, and by a similar argument as above it follows that $L=0$.
		
		\smallskip
		\noindent(4) We set $w=\frac{f_C^\prime}{f_C}$. Then the function $w$ satisfies
		\[w^\prime=Ce^{-h_0^2}-w^2 \]
		with $w(0)=0$. We define $z\colon [s_0,\infty)\to\R$ by
		\[ z(s)=w(h_0^{-1}(s))\frac{e^{\frac{1}{2}s^2}}{s} \]
		and we need to show that $z\in[0,1]$. A calculation shows that $z$ satisfies
		\[z^\prime(s)=-s z(s)^2+\frac{s^2-1}{s}z(s)+\frac{C}{s} \]
		with $z(s_0)=0$ and hence $z^\prime(s_0)=\frac{C}{s_0}>0$, i.e.\ $z(s)\in(0,1)$ for $s$ near $s_0$. If $z(s)=0$ for $s>s_0$, then $z^\prime(s)=\frac{C}{s}>0$ and if $z(s)=1$, then $z^\prime(s)=\frac{C-1}{s}<0$. This shows that $z$ cannot leave the interval $(0,1)$.
	\end{proof}
	\begin{definition}
		For $a,b>0$ define $h_a=a\cdot h_0$ and $f_{b,C}=b\cdot f_C$.
	\end{definition}
	The boundary conditions \eqref{EQ:BOUND_COND_0}--\eqref{EQ:BOUND_COND_t0'} can easily be satisfied with $h=h_a$ and $f=f_{b,C}$ by suitable choices of $a$ and $b$, except perhaps the value of $f$ at $t_0$, but this can be achieved for example by extending $f$ and $h$ by straight lines provided that the value of $f$ at $t_0$ is less than $\beta N\sin(R/N)$. This is the reason why the constant $\kappa$ appears in Theorem \ref{T:GEN_SURG}.
	
	Now we consider the Ricci curvatures \eqref{EQ:D_WARP_T}--\eqref{EQ:D_WARP_W}.
	\begin{lemma}
		\label{L:tb}
		For $a, b, C$ small enough we have that $\Ric(\partial_t,\partial_t), \Ric(V,V)>0$ for all $t$ and $\Ric(W,W)>0$ for small values of $t$. Further, let $t_b>0$ be the smallest value such that $\Ric(W,W)(t_b)=0$. Then
		\begin{enumerate}
			\item $t_b\to\infty$ as $b\to0$,
			\item $f_{b,C}^\prime(t_b)\to 1$ as $b\to0$.
		\end{enumerate}
	\end{lemma}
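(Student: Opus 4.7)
The plan is to substitute $h=h_a$ and $f=f_{b,C}$ into \eqref{EQ:D_WARP_T}--\eqref{EQ:D_WARP_W}. The definitions of $h_0$ and $f_C$ give $h_a''/h_a=-e^{-h_0^2}$ and $f_{b,C}''/f_{b,C}=Ce^{-h_0^2}$, so one computes immediately that $\Ric(\partial_t,\partial_t)=[(p-1)-(q-1)C]\,e^{-h_0^2}$, which is positive as soon as $C<(p-1)/(q-1)$. For $\Ric(V,V)$, using $(h_0')^2=e^{-h_0^2}$ and applying Lemma~\ref{L:FC_PROPERTIES}(4) to bound the cross-term by $(q-1)e^{-h_0^2}$ leaves $\Ric(V,V)\geq -(q-2)e^{-h_0^2}+(p-2)(1-a^2 e^{-h_0^2})/(a^2 h_0^2)$. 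Since $h_0^2 e^{-h_0^2}$ is bounded on $[0,\infty)$ and $h_0\geq h_0(0)>0$, the positive term dominates uniformly in $t$ once $a$ is sufficiently small. Positivity of $\Ric(W,W)$ near $t=0$ is immediate from $f_C(0)=1$: the leading term $(q-2)(1-b^2(f_C')^2)/(b^2 f_C^2)$ in \eqref{EQ:D_WARP_W} can be made arbitrarily large there by shrinking $b$.

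For part~(1), on any fixed interval $[0,T]$ the functions $f_C$ and $f_C'$ are continuous and hence bounded, so the same dominant term forces $\Ric(W,W)>0$ throughout $[0,T]$ whenever $b$ is small enough. This yields $t_b>T$ eventually, proving $t_b\to\infty$. For part~(2), evaluating $\Ric(W,W)(t_b)=0$ and solving for $1-\xi^2$, where $\xi:=f_{b,C}'(t_b)=bf_C'(t_b)\in[0,1)$ and $\eta:=bf_C(t_b)$, yields
\[1-\xi^2=\frac{C}{q-2}\bigl(\eta\,h_0'(t_b)\bigr)^2+\frac{p-1}{q-2}\,\eta^2\,\frac{h_0'(t_b)}{h_0(t_b)}\,\frac{f_C'(t_b)}{f_C(t_b)},\]
after rewriting $e^{-h_0^2(t_b)}=(h_0'(t_b))^2$ in the first summand. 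Since $t_b\to\infty$ by part~(1), Lemma~\ref{L:FC_PROPERTIES}(3) gives $\eta h_0'(t_b)=bf_C(t_b)h_0'(t_b)\to 0$, which kills the first summand. The second summand is dominated by $\frac{p-1}{q-2}(\eta h_0'(t_b))^2$ via Lemma~\ref{L:FC_PROPERTIES}(4), so it also vanishes. Combined with $\xi\geq 0$, this forces $\xi\to 1$.

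The chief obstacle is that a priori the quantity $\eta=bf_C(t_b)$ need not be bounded as $b\to 0$: the inequality $bf_C'(t_b)\leq 1$ gives no direct control on $bf_C(t_b)$, and $t_b\to\infty$ rules out any naive integration estimate. What rescues the argument is that the Ricci equation at $t_b$ involves $\eta$ only through the combinations $\eta h_0'(t_b)$ and $\eta^2(h_0'/h_0)(f_C'/f_C)$, both of which are tamed by the decay $f_C(t)h_0'(t)\to 0$ provided by Lemma~\ref{L:FC_PROPERTIES}(3). This is precisely the payoff of the delicate asymptotic analysis carried out in that lemma.
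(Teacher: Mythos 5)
Your proposal is correct and follows essentially the same route as the paper: the same explicit formulas for the three Ricci curvatures of $g_{f_{b,C},h_a}$, with positivity and the limits (1), (2) extracted from the equation $\Ric(W,W)(t_b)=0$ via parts (3) and (4) of Lemma \ref{L:FC_PROPERTIES}. The only (harmless) variations are cosmetic: you establish $\Ric(V,V)>0$ by a direct uniform bound rather than the paper's first-zero contradiction at $t_a$, and you obtain $t_b\to\infty$ by forcing $\Ric(W,W)>0$ on each fixed $[0,T]$ for small $b$, whereas the paper deduces it from $f_C^\prime(t_b)\to\infty$.
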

	\begin{proof}
		We calculate
		\begin{align*}
			\Ric(\partial_t,\partial_t)&=((p-1)-(q-1)C)e^{-h_0^2},
		\end{align*}
		which is positive when $C<\frac{p-1}{q-1}$. We also have
		\begin{align*}
			\Ric(V,V)&=e^{-h_0^2}+(p-2)\frac{a^{-2}-e^{-h_0^2}}{h_0^2}-(q-1)\frac{f_C^\prime h_0^\prime}{f_C h_0}.
		\end{align*}
		For $t=0$ this expression is positive if $a^{-2}>e^{-h_0(0)^2}$. Now suppose there is $t>0$ such that $\Ric(V,V)(t)=0$ and let $t_a$ be the minimal such $t$, which is positive if $a^{-2}>e^{-h_0(0)^2}$. At $t=t_a$ we then have
		\begin{align*}
			\frac{(p-2)}{a^2}&=h_0(t_a)^2\left(-e^{-h_0(t_a)^2}+(p-2)\frac{e^{-h_0(t_a)^2}}{h_0(t_a)^2} + (q-1)\frac{f_C^\prime h_0^\prime}{f_C h_0}(t_a)  \right) \\
			&=-h_0(t_a)^2e^{-h_0(t_a)^2}+(p-2)e^{-h_0(t_a)^2}+(q-1)\frac{f_C^\prime}{f_C h_0 h_0^\prime}(t_a) h_0(t_a)^2e^{-h_0^2(t_a)}.
		\end{align*}
		The left-hand side converges to $\infty$ as $a\to0$, while the right-hand side is uniformly bounded by Lemma \ref{L:FC_PROPERTIES}. Hence, by choosing $a$ sufficiently small, we can achieve that $\Ric(V,V)(t)>0$ for all $t$.
		
		Finally, we consider \eqref{EQ:D_WARP_W}:
		\begin{align*}
			\Ric(W,W)&=-Ce^{-h_0^2}+(q-2)\frac{b^{-2}-(f_C^\prime)^2}{f_C^2}-(p-1)\frac{h_0^\prime f_C^\prime}{h_0 f_C}.
		\end{align*}
		By choosing $C$ or $b$ small enough, we can achieve that $\Ric(W,W)(t)>0$ at $t=0$. At $t=t_b$ we have
		\begin{align*}
			\frac{(q-2)}{b^2}&=f_C(t_b)^2\left(Ce^{-h_0(t_b)^2}+(q-2)\frac{f_C^\prime(t_b)^2}{f_C(t_b)^2}+(p-1)\frac{f_C^\prime h_0^\prime}{f_C h_0 }(t_b) \right)\\
			&=Cf_C(t_b)^2 h_0^\prime(t_b)^2+(q-2)f_C^\prime(t_b)^2+(p-1)\frac{f_C^\prime}{f_C h_0 h_0^\prime}(t_b) f(t_b)^2 h_0^\prime(t_b)^2.
		\end{align*}
		By Lemma \ref{L:FC_PROPERTIES} both the first and third term are uniformly bounded, so $f_C^\prime(t_b)\to\infty$ as $b\to0$. Hence, $t_b\to\infty$ as $b\to 0$.
		
		Rearranging the terms yields
		\begin{align*}
			f_{b,C}^\prime(t_b)^2=b^2 f_C^\prime(t_b)&=1-\frac{b^2}{(q-2)}f_C(t_b)^2h_0^\prime(t_b)^2\left(C+(p-1)\frac{f_C^\prime}{f_C h_0 h_0^\prime}(t_b) \right)
		\end{align*}
		and the claim now follows from Lemma \ref{L:FC_PROPERTIES}.
	\end{proof}
	\begin{proof}[Proof of Theorem \ref{T:GEN_SURG}]
		By Lemma \ref{L:THM_A} it remains to show that there are values of $a,b,C$ for which $h=h_a$ and $f=f_{b,C}$ satisfy the boundary conditions \eqref{EQ:BOUND_COND_0}--\eqref{EQ:BOUND_COND_t0'} and for which the Ricci curvatures \eqref{EQ:D_WARP_T}--\eqref{EQ:D_WARP_W} are positive. By Lemma \ref{L:tb}, for $a,b,C$ sufficiently small, the Ricci curvatures are positive on $[0,t_b)$. By perhaps choosing $a$ and $b$ even smaller, we can achieve that \eqref{EQ:BOUND_COND_0} and \eqref{EQ:BOUND_COND_0'} are satisfied and such that $f^\prime(t_b)>\cos(R/N)$. Note that for \eqref{EQ:BOUND_COND_0} to be satisfied the value of $b/a$ depends on $\lambda$ and $r$. Now choose $t_1<t_b$ such that $f^\prime(t_1)>\cos(R/N)$, i.e.\ the Ricci curvatures on $[0,t_1]$ are strictly positive. We now extend the functions $f$ and $h$ as follows: The function $h$ gets extended by the constant function $h(t_1)$ and $f$ gets extended continuously such that the following holds:
		\begin{enumerate}
			\item $f^\prime_-(t_1)\geq f^\prime_+(t_1)$,
			\item $f$ is smooth on $(t_1,\infty)$,
			\item $f^\prime(t)\in(\cos(R/N),1)$ for $t\geq t_1$,
			\item $f^{\prime\prime}(t)<0$ for $t\geq t_1$.
		\end{enumerate}
		Then clearly all Ricci curvatures are positive. We now choose $t_0>t_1$ such that\linebreak $f(t_0)=N\sin(R/N)\frac{h(t_1)}{\rho}$, which exists if and only if $f(t_1)<N\sin(R/N)\frac{h(t_1)}{\rho}$. This is the case if and only if
		\[\frac{\rho}{N}<\frac{h(t_1)}{f(t_1)}\sin\left(\frac{R}{N}\right). \]
		The values of $f$ and $h$ at $t=t_1$ only depend on $a,b,C$ and $\cos(R/N)$, which in turn only depend on $p,q,\lambda,r$ and $R/N$. The value of $\lambda$ only depends on the metric $g_B$. Smoothing the functions $f$ and $h$ at $t=t_1$ using Corollary \ref{C:WARP_GLUING} finishes the proof.
	\end{proof}

	We conclude this section by noting that the warping functions constructed in \cite{SY91} and \cite{Wr98} define a doubly warped product metric with merely non-negative Ricci curvature, therefore making it necessary to apply the deformation results of Ehrlich \cite{Eh76} to obtain strictly positive Ricci curvature. In our situation the doubly warped product metric is constructed to have strictly positive Ricci curvature, thus we do not need to apply these deformation results. This is achieved in two ways: First, we introduce the factors $a$, $b$ and $C$, where decreasing one of these factors results in increasing one of the Ricci curvatures \eqref{EQ:D_WARP_T}--\eqref{EQ:D_WARP_W}. Second, by using Theorem~\ref{T:PER_GLUING} and Corollary~\ref{C:WARP_GLUING} we do not need the metrics to glue smoothly and the functions to be smooth everywhere. In particular, we do not need to make additional adjustments to the functions near the boundary points of the interval. Therefore we can achieve strictly positive Ricci curvature by simply choosing the factors $a$, $b$ and $C$ small enough.
	
	\section{Proof of Theorems \ref{T:PLUMBING} and \ref{T:2-SPHERE_BUNDLES}}
	\label{S:THMS_BC}
	
	First we prove Theorem \ref{T:PLUMBING}. We denote the sphere bundle of the disc bundle $\overline{E}_i$ by $E_i$ and we set $q=\dim(B_1)$ and $p$ as the fiber dimension of $\overline{E}_1$. Since all $B_i$, $i\geq2$, have core metrics, the manifold $\partial W$ is obtained by iterated surgeries on the manifold $M=E_1$ as in Theorem \ref{T:GEN_SURG} (cf.\ \eqref{EQ:PLUMB_BOUND}). By a deformation result of Gao and Yau \cite{GY86} for negative Ricci curvature, that can easily be transferred to positive Ricci curvature (see also \cite[Theorem 1.10]{Wr02}) for every $x\in B_1$ and any open neighborhood $U$ of $x$ we can deform the metric on $B_1$ to agree with the original metric on $B_1\setminus U$ and to have constant sectional curvature 1 on a neighborhood of $x$. Hence, for any $k_1\in\N$ we can deform the metric on $B_1$ such that there are positive constants $R_1,\dots,R_{k_1}$ and an isometric embedding
	\[D^{q}_{R_1}(1)\sqcup\dots\sqcup D^{q}_{R_{k_1}}(1)\hookrightarrow B_1. \]
	Now we equip $E_1$ with the metric $g_{E_1}(\rho,\theta)$ according to a connection $\theta$ that is flat over each embedded disc, so we have an isometric embedding
	\[S^{p-1}(\rho)\times D^{q}_{R_1}(1)\sqcup\dots\sqcup S^{p-1}(\rho)\times D^{q}_{R_{k_1}}(1)\hookrightarrow E_1. \]
	By Proposition \ref{P:VILMS_RICCI} there is a constant $\rho_1>0$ such that $g_{E_1}(\rho,\theta)$ has positive Ricci curvature for all $\rho<\rho_1$. By choosing $\rho$ small enough we can satisfy the assumptions of Theorem \ref{T:GEN_SURG}. By possibly choosing $\rho$ even smaller we can freely choose the radii of the fibers of the bundles we attach. Hence, by choosing sufficiently small radii for the attached bundles, we can satisfy again the assumptions of Theorem \ref{T:GEN_SURG} for the attached bundles.
	
	We repeat this process: Since we glue according to a tree, where we consider $E_1$ as the root, the manifold $\partial W$ is obtained by successively gluing the bundles that have distance $i$ from the root to the bundles that have distance $i-1$ from the root. As above we can apply Theorem~\ref{T:GEN_SURG} for each gluing by possibly decreasing the fiber radii of all the preceding bundles. This finishes the proof of the first part of Theorem \ref{T:PLUMBING}.
	
	To show the second part we use the following.
	\begin{proposition}[{\cite[Theorem 2.5]{Bu19}}]
		\label{P:DISC_SPHERE_GLUE}
		For $q\geq3$, $p\geq4$, $R>1$, and any $\nu>0$ sufficiently small, there is a core metric on $D^{q}\times S^{p-1}$ such that the boundary is isometric to $R^2ds^2_{q-1}+ds^2_{p-1}$ and the principal curvatures of the boundary are all at least $-\nu$.
	\end{proposition}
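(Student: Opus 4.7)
The plan is to construct the desired metric as a doubly warped product
\[g = dt^2 + a(t)^2 g_{S^{q-1}} + b(t)^2 g_{S^{p-1}}\]
on $[0, L] \times S^{q-1} \times S^{p-1}$, subject to a Perelman-type modification on a small interior ball. Descent to a smooth metric on $D^q \times S^{p-1}$ requires the usual conditions $a(0) = 0$, $a'(0) = 1$, $a^{(2k)}(0) = 0$ and $b(0) > 0$, $b^{(2k+1)}(0) = 0$; the boundary isometry forces $a(L) = R$ and $b(L) = 1$; and the lower bound $-\nu$ on the principal curvatures, which with respect to the inward normal $-\partial_t$ are $-a'(L)/R$ and $-b'(L)$, becomes $a'(L) \leq \nu R$ and $b'(L) \leq \nu$.

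I would build $(a, b)$ in two phases joined by Corollary~\ref{C:WARP_GLUING}. In \emph{Phase~I} ($t \in [0, t_1]$), set $a(t) = r_0 \sin(t/r_0)$ and $b(t) = r_0 \cos(t/r_0)$ for a parameter $r_0 > 1$ chosen large, realising the region isometrically as a tubular neighborhood of the collapsing locus $\{t=0\} \cong S^{p-1}$ inside the round sphere $S^{q+p-1}(r_0)$. Cut out a geodesic ball of radius $\rho = r_0 \arcsin(1/r_0)$ around a chosen point of $\{t=0\}$, so its boundary is round of radius exactly $1$, and glue in Perelman's explicit positive-Ricci metric on $D^{q+p-1}$ from \cite{Pe97}, whose boundary is the round unit sphere with $\II > 0$ relative to the inward normal. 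The dimensional assumption $q+p-1 \geq 4$ needed for Perelman's cap is guaranteed by $q \geq 3$ and $p \geq 4$, and Theorem~\ref{T:PER_GLUING} legitimises the gluing because the sum of the two $\II$'s---both positive with respect to their respective inward normals---is positive definite. The resulting embedded Perelman cap is the core disc required for a core metric, and Ricci curvature remains positive throughout Phase~I (constant $1/r_0^2$ outside the cap, and positive inside by Perelman's construction).

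In \emph{Phase~II} ($t \in [t_1, L]$), I would take $a$ concave increasing from $r_0 \sin(t_1/r_0)$ to $R$ with $0 < a' < 1$, and $b$ convex decreasing from $r_0 \cos(t_1/r_0)$ to $1$ with $-1 < b' < 0$. For $r_0$ large and $t_1/r_0$ small, the initial values satisfy $r_0 \sin(t_1/r_0) < R$ and $r_0 \cos(t_1/r_0) > 1$, so the monotonicity requirements are compatible. Under these sign conventions the cross term $-a'b'/(ab)$ appearing in \eqref{EQ:D_WARP_V} and \eqref{EQ:D_WARP_W} has the favourable positive sign, and the spherical terms $(q-2)(1-(a')^2)/a^2$ and $(p-2)(1-(b')^2)/b^2$ are non-negative with coefficients $q-2 \geq 1$ and $p-2 \geq 2$ strictly positive. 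Hence $\Ric(W, W) > 0$ is automatic (the concavity contribution $-a''/a > 0$ only helps), while $\Ric(V, V) > 0$ follows once $b''$ is kept small. A final taper of $a'$ down to a value in $(0, \nu R]$ at $t = L$ achieves the bound $a'(L) \leq \nu R$; the corresponding bound $b'(L) \leq \nu$ is automatic since $b' < 0$ throughout Phase~II.

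The main obstacle is positivity of $\Ric(\partial_t, \partial_t) = -(q-1)a''/a - (p-1)b''/b$ on Phase~II, as $a'' < 0$ and $b'' > 0$ give contributions of opposite signs. The resolution is to enforce $(q-1)|a''|/a > (p-1) b''/b$ pointwise, a one-parameter design problem: writing $a(t) = R \sin\psi(t)$ and $b(t) = 1 + c\,\eta(t)$ with $\psi$ concave increasing, $\eta$ mildly convex decreasing, and $c > 0$ small reduces this inequality to an elementary comparison between $(\psi')^2 \sin\psi + |\psi''|\cos\psi$ and $c\eta''$, which clearly holds for $c$ small. The one-sided derivative inequalities demanded by Corollary~\ref{C:WARP_GLUING} at the junction $t_1$ then reduce to routine matching, and the hypothesis that $\nu$ be sufficiently small enters only in the length of the final tapering substage.
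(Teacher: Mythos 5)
First, note that the paper does not prove Proposition~\ref{P:DISC_SPHERE_GLUE} at all: it is imported verbatim from \cite[Theorem 2.5]{Bu19}, so your proposal has to stand on its own. It does not, and the failure is exactly at the step that makes the statement nontrivial: the production of the core disc. You cut a geodesic ball of intrinsic boundary radius $1$ out of the round region $S^{q+p-1}(r_0)$ and glue in a cap which you want to be simultaneously (a) gluable by Theorem~\ref{T:PER_GLUING} and (b) the core disc. These are incompatible. The boundary of the removed ball, seen from the complement (whose outward normal points into the hole), has second fundamental form $-\tfrac{\sqrt{r_0^2-1}}{r_0}\,\mathrm{id}\approx-\mathrm{id}$ for large $r_0$. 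Theorem~\ref{T:PER_GLUING} requires the sum of the second fundamental forms taken with respect to the \emph{outward} normals to be positive semi-definite --- this is how it is used in Lemma~\ref{L:THM_A}, where the signs in \eqref{EQ:II_WARP} are reversed precisely because $\partial_t$ is the inward normal there. Hence the cap must satisfy $\II\geq\tfrac{\sqrt{r_0^2-1}}{r_0}\,\mathrm{id}$ with respect to its outward normal, i.e.\ it must be strictly convex; but then $\II$ is negative definite with respect to the disc-inward normal, so the cap is not a core disc. Conversely, the cap you describe (``$\II>0$ relative to the inward normal'', a balloon) has $\II<0$ with respect to its outward normal, and the sum at the interface is $\approx-\mathrm{id}+(\text{negative definite})$, so the hypothesis of Theorem~\ref{T:PER_GLUING} fails; your justification ``both positive with respect to their respective inward normals'' is a sign error (under that reading, two balloons could always be glued, which is false --- think of the dumbbell). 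No choice of $r_0$, $t_1$ or $\nu$ repairs this, because the outside concavity of a unit-radius hole in a large round sphere is bounded away from $0$. The underlying geometric point is that a core disc must have \emph{convex complement}, whereas any geodesic ball removed from the round region has concave complement; so the core cannot be created by capping such a hole, and this is exactly why the proposition is a genuine theorem of Burdick rather than a warped-product exercise.

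There are secondary problems as well. The principal-curvature hypothesis in the statement must be read with respect to the outward normal of $D^q\times S^{p-1}$ (that is how it is used in the proof of Theorem~\ref{T:PLUMBING}(2), where the mild concavity $-\nu$ is absorbed by the strictly convex boundary of the other piece), so the constraints at $t=L$ are $a'(L)\geq-\nu R$ and $b'(L)\geq-\nu$: the binding condition is the taper of $b'$ (which is negative along your neck) towards $0$, not the taper of $a'$ --- you have the automatic and non-automatic conditions interchanged. Your Phase~II ansatz $b=1+c\,\eta$ with $c$ small is inconsistent with the initial value $b(t_1)=r_0\cos(t_1/r_0)\approx r_0\gg1$ forced by Phase~I, so the claimed resolution of the competition in $\Ric(\partial_t,\partial_t)$ is not actually set up; and even for a legitimate gluing, Theorem~\ref{T:PER_GLUING} alters the metric near the interface, so the core condition would have to be verified for the smoothed metric rather than quoted for the unglued cap. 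The Phase~II curvature discussion (signs of the terms in \eqref{EQ:D_WARP_V}--\eqref{EQ:D_WARP_W}) is plausible in outline, but the proof as proposed does not go through because of the core-disc obstruction above.
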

	If $B_1$ admits a core metric, then we can choose the embeddings on which we perform the surgeries to be disjoint from the embedded disc $\varphi(D^{q})$. We can also assume that the connection for the bundle $E_1$ is flat over $\varphi(D^{q})$. Hence, if we remove $\varphi(D^{q})$ from $B_1$ and the corresponding part of the bundle $E_1$, we obtain a boundary component isometric to $ds_{q-1}^2+\rho ds_{p-1}^2$ with non-negative definite second fundamental form. By \cite[Proposition 2.3]{Bu19} we can assume that the second fundamental form is positive definite and by possibly choosing $\rho$ smaller and rescaling we can assume that the boundary is isometric to $R^2ds_{q-1}^2+ds_{p-1}^2$ for some $R>1$. Hence, by Theorem \ref{T:PER_GLUING}, we can glue with the metric from Proposition \ref{P:DISC_SPHERE_GLUE} (where we assume $p\geq4$) and obtain a core metric on $\partial W$. This finishes the proof of Theorem \ref{T:PLUMBING}.
	\hfill$\square$
	
	\leavevmode\\
	
	For the proof of Theorem \ref{T:2-SPHERE_BUNDLES} let $E\to B^q$ be a linear $S^p$-bundle, where $B$ is a closed manifold that admits a core metric. First suppose that $p=2$ and $q\geq4$. Let $\overline{E}$ be the disc bundle of $E$. Let $\overline{M}$ be the manifold obtained by plumbing as follows:
	\[
	\begin{tikzcd}[cells={nodes={ellipse, draw=black, anchor=center,minimum height=2em}}]
		\overline{E} \arrow[dash]{r}{+} & S^3\times D^{q}\arrow[dash]{r}{+}& S^{q}\times D^3
	\end{tikzcd}
	\]
	According to \eqref{EQ:PLUMB_BOUND} we have
	\begin{align*}
		\partial\overline{M}&\cong E\setminus\pi^{-1}(D^q)^\circ\cup_{S^{q-1}\times S^2}(S^{q-1}\times S^3\setminus(D^3\sqcup D^3)^\circ)\cup_{S^{q-1}\times S^2} (S^q\setminus (D^q)^\circ)\times S^2\\
		&\cong E\setminus\pi^{-1}(D^q)^\circ\cup_{S^{q-1}\times S^2}(S^{q-1}\times [0,1]\times S^2)\cup_{S^{q-1}\times S^2}D^q\times S^2\\
		&\cong E\setminus\pi^{-1}(D^q)^\circ\cup_{S^{q-1}\times S^2}D^q\times S^2\\
		&\cong E,
	\end{align*}
	see also \cite[Lemma 2.10]{CW17}. By applying Theorem \ref{T:PLUMBING} with $\overline{E}_1=S^3\times D^q$, we obtain a core metric on $\partial\overline{M}$ and thus on $E$.
	
	Now suppose that $q=2$ and $p\geq4$. Then $B$ is a 2-dimensional closed manifold with a core metric, hence $B\cong S^2$. Since $\pi_1(\mathrm{SO}(p+1))\cong\Z/2$, there are precisely two isomorphism classes of linear $S^p$-bundles over $S^2$. If $E$ is the trivial bundle, i.e.\ $E\cong S^2\times S^p$, then we can also consider it as a linear $S^2$-bundle over $S^p$ and apply the first part of Theorem \ref{T:2-SPHERE_BUNDLES}. If $E$ is the non-trivial bundle then the claim follows from Theorem \ref{T:PLUMBING} and Lemma \ref{L:PLUMBING} below.
	\begin{lemma}
		\label{L:PLUMBING}
		Let $\overline{M}$ be the manifold obtained by plumbing as follows.
		\[
		\begin{tikzcd}[cells={nodes={ellipse, draw=black, anchor=center,minimum height=2em}}]
			\C P^2\times D^{p-1} \arrow[dash]{r}{+} & S^{p-1}\times D^{4}
		\end{tikzcd}
		\]
		Then $\partial \overline{M}$ is diffeomorphic to the unique non-trivial linear $S^{p}$-bundle over $S^2$.
	\end{lemma}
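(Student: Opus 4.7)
The plan is to apply the boundary formula \eqref{EQ:PLUMB_BOUND} to unfold the plumbing, recognize $\partial \overline M$ as the unit sphere bundle of an explicit rank-$(n+1)$ vector bundle over $S^2$, and then read off its class in the classification of linear $S^n$-bundles over $S^2$.

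First I would apply \eqref{EQ:PLUMB_BOUND}. Since both disc bundles in the plumbing are trivial, $E_1 = \C P^2 \times S^{n-2}$ and $E_2 = S^{n-1} \times S^3$, and the formula gives
\[
\partial \overline M \cong \bigl( (\C P^2 \setminus (D^4)^\circ) \times S^{n-2} \bigr) \cup_\Phi \bigl( D^{n-1} \times S^3 \bigr),
\]
where $\Phi$ is the gluing induced by the plumbing map $I$ on $S^3 \times S^{n-2}$, namely (up to orientation-preserving self-diffeomorphisms of $D^4$ and $D^{n-1}$ isotopic to the identity on the boundary) the factor swap. The standard handle description $\C P^2 = D(\gamma) \cup_{S^3} D^4$, where $\gamma \to \C P^1 = S^2$ is the tautological complex line bundle and $S^3 = S(\gamma)$ is the Hopf sphere bundle, identifies $\C P^2 \setminus (D^4)^\circ$ with $D(\gamma)$.

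Next, I would recognize $\partial \overline M$ as the unit sphere bundle $S(\xi)$ of the rank-$(n+1)$ oriented vector bundle $\xi := \gamma \oplus \underline{\R}^{n-1}$ over $S^2$. Writing $S(\xi)$ fiberwise as $\{(v,w) \in \gamma_p \oplus \R^{n-1} : |v|^2 + |w|^2 = 1\}$ and splitting at $|v|^2 = 1/2$, the maps $(v,w) \mapsto (\sqrt{2}\,v,\, w/|w|)$ and $(v,w) \mapsto (v/|v|,\, \sqrt{2}\,w)$ give diffeomorphisms of the two pieces onto $D(\gamma) \times S^{n-2}$ and $S^3 \times D^{n-1}$ respectively, agreeing on the overlap $\{|v|^2 = 1/2\} \cong S^3 \times S^{n-2}$. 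This recovers the decomposition above, so $\partial \overline M \cong S(\xi)$. Linear $S^n$-bundles over $S^2$ are classified by $\pi_1(\mathrm{SO}(n+1)) \cong \Z/2$ for $n \geq 2$, detected by $w_2$; since $w_2(\xi) = w_2(\gamma) = c_1(\gamma) \bmod 2 \neq 0$, the bundle $S(\xi)$ is the unique non-trivial linear $S^n$-bundle over $S^2$, as claimed.

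The main obstacle will be bookkeeping: verifying that the gluing $\Phi$ from the plumbing formula matches, up to isotopy, the natural gluing in the sphere-bundle decomposition. This reduces to checking that both gluings identify the Hopf sphere bundle $S(\gamma) = S^3$ consistently and that the factor swap is absorbed by a trivial reparameterization—straightforward but requiring care with the orientation conventions for the sign-$(+1)$ plumbing.
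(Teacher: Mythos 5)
Your proposal is correct and takes essentially the same route as the paper: both decompose $\partial\overline{M}$ via \eqref{EQ:PLUMB_BOUND}, identify $\C P^2\setminus(D^4)^\circ$ with the disc bundle of the tautological line bundle over $S^2$, use compatibility of the gluing with the Hopf fibration, and detect nontriviality by a $w_2$ computation. Your explicit identification of the boundary with the sphere bundle of $\gamma\oplus\underline{\R}^{n-1}$ merely packages the paper's structure-group-$\mathrm{SO}(2)$ argument and its $w_2$ calculation into one step, and the gluing ``bookkeeping'' you flag is handled at the same level of detail in the paper.
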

	\begin{proof}
		According to \eqref{EQ:PLUMB_BOUND} we have
		\[\partial\overline{M}\cong (\C P^2\setminus (D^4)^\circ)\times S^{p-2}\cup_{S^3\times S^{p-2}}S^3\times D^{p-1}. \]
		The manifold $\C P^2\setminus (D^4)^\circ$ is diffeomorphic to the disc bundle of the tautological line bundle over $\C P^1\cong S^2$. Hence, the manifold $(\C P^2\setminus (D^4)^\circ)\times S^{p-2}$ has the structure of a fiber bundle over $S^2$ with fiber $D^2\times S^{p-2}$. On the other hand, the manifold $S^3\times D^{p-1}$ also has the structure of a fiber bundle over $S^2$ obtained by the Hopf fibration $S^3\to S^2$, i.e.\ the fiber of this bundle is $S^1\times D^{p-1}$. Since the bundle projection $\partial (\C P^2\setminus (D^4)^\circ)\cong S^3\to S^2$ is also given by the Hopf fibration, we glue fibers to fibers, so $\partial \overline{M}$ has the structure of a fiber bundle over $S^2$ with fiber
		\[D^2\times S^{p-2}\cup_{S^1\times S^{p-2}} S^1\times D^{p-1}\cong S^p. \]
		Both bundles have the structure group of the Hopf fibration, which is contained in $\mathrm{SO}(2)$, hence $\partial\overline{M}$ also has structure group contained in $\mathrm{SO}(2)\subseteq \mathrm{SO}(p+1)$, so it is a linear bundle. It is non-trivial, since under the inclusion $(\C P^2\setminus (D^4)^\circ)\times S^{p-2}\hookrightarrow\partial\overline{M}$, the class $w_2(\partial\overline{M})$ gets mapped to $w_2((\C P^2\setminus (D^4)^\circ)\times S^{p-2})$, which is\linebreak non-trivial as it is the pullback of $w_2(\C P^2\times S^{p-2})$ under the inclusion\linebreak $(\C P^2\setminus (D^4)^\circ)\times S^{p-2} \hookrightarrow \C P^2\times S^{p-2}$ (which is an isomorphism on $H^2$). Thus $w_2(\partial\overline{M})$ is non-trivial, so $\partial\overline{M}$ cannot be diffeomorphic to $S^2\times S^{p}$.
	\end{proof}

	\section{Applications in Dimension 6}
	\label{S:6-MANIFOLDS}
	
	In this section we consider applications in dimension 6. Corollary \ref{C:CORE_METRICS_DIM6} directly follows from Theorem \ref{T:CORE_METRICS} and Theorem \ref{T:2-SPHERE_BUNDLES}. We will now show that we can construct new examples of manifolds with a metric of positive Ricci curvature in this way.
	
	\subsection{Known Examples of Closed, Simply-Connected 6-Manifolds with a Metric of Positive Ricci Curvature}
	\label{SS:KNOWN}
	
	Let us consider the examples known so far. By \cite[Theorem 3.5]{Na79} (cf.\ also Proposition \ref{P:VILMS_RICCI}), fiber bundles with homogeneous fibers admit a metric of positive Ricci curvature if both base and fiber admit a metric of positive Ricci curvature. We obtain the following list of manifolds that admit a metric of positive Ricci curvature:
	\begin{enumerate}
		\item Linear $S^2$-bundles over $B=\#_k (\pm\C P^2)$ (if $k=0$ then $B=S^4$) or $B=\#_k(S^2\times S^2)$ (the base $B$ admits a metric of positive Ricci curvature by \cite{Pe97} and by \cite[Theorem 4]{SY91});
		\item $S^3\times S^3$;
		\item $S^2\ttimes S^4$, the unique non-trivial linear $S^4$-bundle over $S^2$;
		\item Projective bundles, i.e.\ $\C P^2$-bundles, over $S^2$.
	\end{enumerate}
	
	Next, we list all closed simply-connected 6-dimensional homogeneous spaces, cohomogeneity one manifolds and biquotients that are not already contained in the list above, by using the classification results of Gorbatsevitch \cite{Go80}, Hoelscher \cite{Ho10}, and DeVito \cite{DV17}, ordered by their second Betti number. These manifolds are:
	\begin{enumerate}
		\item[(5)] $S^6$;
		\item[(6)] The oriented Grassmannian $\tilde{G}_2(\R^5)\cong \mathrm{SO}(5)/(\mathrm{SO}(3)\times \mathrm{SO}(2))$ (which is a homogeneous space);
		\item[(7)] The homogeneous space $\mathrm{SU}(3)/T^2$ and the biquotient $\mathrm{SU}(3)\bq T^2$;
		\item[(8)] Biquotients of the form $(S^3)^3\bq T^3$ that are diffeomorphic to a $(\C P^2\#\C P^2)$-bundle over $S^2$ or to one of 4 sporadic examples (see \cite[Proposition 4.23]{DV17}, and note that the other families appearing in this proposition are already contained in the previous items).
	\end{enumerate}

	Further, as a result of Yau's proof of the Calabi conjecture, Fano varieties admit metrics of positive Ricci curvature. In (complex) dimension 3 Fano varieties were classified by Iskovskih \cite{Is77,Is78} for $b_2=1$ and by Mori and Mukai \cite{MM81,MM03} for $b_2\geq 2$. Their result can be summarized as follows (note that we omit the manifolds with $b_2>5$ as they are all diffeomorphic to a product of $S^2$ and and a connected sum of copies of $\pm \C P^2$, i.e.\ they are contained in item (1)): 
	
	\begin{enumerate}
		\item[(9)] 18 types of Fano 3-folds with $b_2=1$ and 83 types of Fano 3-folds with $2\leq b_2\leq5$.
	\end{enumerate}
	
	Finally, Sha and Yang \cite{SY91}, by using a surgery theorem similar to Theorem \ref{T:WR-SURGERY}, and Corro and Galaz-Garc\'ia \cite{CG20}, by using a lifting result of Gilkey, Park and Tuschmann \cite{GPT98}, obtained metrics of positive Ricci curvature on connected sums of sphere bundles, which in dimension 6 are given as follows:
	\begin{enumerate}
		\item[(10)] $\#_k (S^2\times S^4) \#_l (S^3\times S^3)$,
		\item[(11)] $(S^2\ttimes S^4)\#_{k}(S^2\times S^4)\#_{k+2} (S^3\times S^3)$.
	\end{enumerate}
	
	Note that the manifolds in items (9), (10) and (11), apart from $S^3\times S^3$, are the only manifolds in this list with non-trivial third Betti number, and, together with the linear $S^2$-bundles in item (1) they are the only manifolds where the second Betti number is greater than 3.
	
	\subsection{New Examples of Closed, Simply-Connected 6-Manifolds with a Metric of Positive Ricci Curvature}
	
	Given a closed, simply-connected and oriented 6-manifold $M$ we have an associated trilinear form $\mu_M\colon H^2(M)\otimes H^2(M)\otimes H^2(M)\to\Z$ defined by
	\[\mu_M(x,y,z)=\langle x\smile y\smile z,[M]\rangle.\]
	Further invariants are the third Betti number $b_3(M)\in 2\Z$, the second Stiefel-Whitney class $w_2(M)\in H^2(M;\Z/2)\cong H^2(M)\otimes\Z/2$ and the first Pontryagin class $p_1(M)\in H^4(M)\cong \mathrm{Hom}(H^2(M),\Z)$. In fact, if $H_2(M)$ is torsion-free, the invariants $(H^2(M),b_3(M),\mu_M,w_2(M),p_1(M))$ already determine the diffeomorphism type of $M$ by the classification of Jupp \cite{Ju73}.
	
	The manifolds we consider are linear $S^2$-bundles over a closed, simply-connected 4-manifold $B$. We have the following classification result.
	\begin{lemma}
		Let $B$ be a closed, simply-connected 4-manifold. Then isomorphism classes of linear $S^2$-bundles over $B$ are in bijection with pairs $(x,Y)\in H^2(B;\Z/2)\times H^4(B)$ such that $X^2\equiv Y\mod 4$ for some $X\in H^2(B)$ with $X\equiv x\mod 2$. The bijection is given by assigning the pair $(w_2(\xi),p_1(\xi))$ to the sphere bundle of a vector bundle $\xi$ over $B$ of rank 3.
	\end{lemma}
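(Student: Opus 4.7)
The plan is to reinterpret linear $S^2$-bundles over $B$ as elements of $[B, B\mathrm{SO}(3)]$ and then apply obstruction theory. Since $B$ is simply-connected, $H^1(B; \Z/2) = 0$, so every linear $S^2$-bundle over $B$ is orientable and arises canonically as the unit sphere bundle of an oriented rank-$3$ real vector bundle; thus isomorphism classes of such bundles correspond to homotopy classes of maps $B \to B\mathrm{SO}(3)$. The available characteristic classes of such a bundle are $w_2, w_3 \in H^*(B;\Z/2)$, the Euler class $e \in H^3(B;\Z)$, and $p_1 \in H^4(B;\Z)$. Poincar\'e duality gives $H^3(B;\Z) \cong H_1(B;\Z) = 0$, whence $e = 0$ and $H^3(B;\Z/2) = 0$, so $w_3 = 0$ automatically. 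Therefore $(w_2, p_1)$ exhausts the nontrivial characteristic-class invariants.

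Next I would analyze $[B, B\mathrm{SO}(3)]$ via the fibration
\[
B\mathrm{SU}(2) = B\mathrm{Spin}(3) \longrightarrow B\mathrm{SO}(3) \xrightarrow{\ w_2\ } K(\Z/2, 2).
\]
Since $\pi_n(B\mathrm{SU}(2)) = 0$ for $n \in \{2,3\}$ and $\pi_4(B\mathrm{SU}(2)) = \Z$, and since $\dim B = 4$, standard obstruction theory shows that $w_2 \colon [B, B\mathrm{SO}(3)] \to H^2(B;\Z/2)$ is surjective and that each fiber is a torsor over $[B, B\mathrm{SU}(2)] \cong H^4(B;\Z)$. For an $\mathrm{SU}(2)$-bundle $\eta$ with $c_2(\eta) = m$, a splitting-principle computation (e.g., using $E \otimes E^* \cong \C \oplus \mathrm{ad}(E)^\C$) gives $w_2(\mathrm{ad}(\eta)) = 0$ and $p_1(\mathrm{ad}(\eta)) = -4m$, so acting by $\eta$ shifts $p_1$ by $-4m$ and preserves $w_2$. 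Hence, over a fixed $x$, the set of realized $p_1$-values forms a full coset of $4 \cdot H^4(B;\Z)$, and the $\Z$-action on $p_1$ is free, which will ultimately yield injectivity of $(w_2, p_1)$.

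To identify the coset, I would construct an explicit reference bundle: pick an integer lift $X \in H^2(B;\Z)$ of $x$ (which exists because $H^3(B;\Z) = 0$ makes mod-$2$ reduction surjective), let $L \to B$ be the complex line bundle with $c_1(L) = X$, and take $\xi_0 = L_\R \oplus \underline{\R}$; using $L_\R \otimes \C \cong L \oplus \overline{L}$ one gets $p_1(\xi_0) = X^2$ and clearly $w_2(\xi_0) = x$. Combined with the previous step, a pair $(x, Y)$ is realized precisely when $Y \equiv X^2 \pmod 4$, a condition independent of the chosen lift since $(X + 2Z)^2 \equiv X^2 \pmod 4$. The main obstacle is the rigorous setup of the $[B, B\mathrm{SU}(2)]$-torsor structure on the set of lifts to $B\mathrm{SO}(3)$ and the verification of $p_1(\mathrm{ad}(\eta)) = -4c_2(\eta)$; both reduce to well-known Postnikov and cohomology computations on $B\mathrm{SO}(3)$ and $B\mathrm{SU}(2)$ (essentially encoding the Pontryagin-square relation $p_1 \equiv \mathcal{P}(w_2) \bmod 4$), and together they give the asserted bijection.
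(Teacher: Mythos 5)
Your proposal is correct in outline, but it takes a genuinely different route from the paper. The paper's proof consists of citing the Dold--Whitney classification of oriented $S^2$-bundles over a $4$-complex by the pair $(w_2,p_1)$ subject to $p_1\equiv\mathcal{P}_2(w_2)\bmod 4$; the only actual work in the paper is translating the Pontryagin-square condition into $X^2\equiv Y\bmod 4$ via an integral lift $X$ of $x$, which exists because $H^3(B;\Z)\cong H_1(B;\Z)=0$. You instead re-derive the classification in this special case: reduction to $B\mathrm{SO}(3)$ via $H^1(B;\Z/2)=0$, obstruction theory for $B\mathrm{SU}(2)\to B\mathrm{SO}(3)\to K(\Z/2,2)$ over a $4$-dimensional base, the shift of $p_1$ by $-4c_2$ computed from $p_1(\mathrm{ad}(\eta))=-4c_2(\eta)$, and the reference bundle $L_\R\oplus\underline{\R}$ with $(w_2,p_1)=(x,X^2)$ --- all of these computations are correct. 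What your route buys is a self-contained homotopy-theoretic proof (essentially re-proving Dold--Whitney for simply-connected closed $4$-manifolds), at the cost of length where a citation suffices. Two points you defer should be made explicit in a write-up: first, passing from vertical homotopy classes of lifts of a fixed map to $K(\Z/2,2)$ to honest homotopy classes of maps uses $[B,\Omega K(\Z/2,2)]=H^1(B;\Z/2)=0$, so simple connectivity is needed again here; second, what obstruction theory gives directly is transitivity of the $H^4(B;\Z)$-action on the fiber of $w_2$, while freeness (the full ``torsor'' claim) is not formal --- it follows a posteriori from your $p_1$-shift formula together with $H^4(B;\Z)\cong\Z$ being torsion-free, and indeed transitivity plus the shift formula already yield both the coset description of realizable $p_1$ and injectivity of $(w_2,p_1)$, so the argument closes correctly.
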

	\begin{proof}
		This is precisely the classification of Dold and Whitney \cite{DW59}, except that the condition on $(x,Y)$ is given by
		\[\mathcal{P}_2(x)\equiv Y\mod 4, \]
		where $\mathcal{P}_2\colon H^2(B;\Z/2)\to H^4(B;\Z/4)$ is the Pontryagin square operation. Since $H^2(B)$ is free abelian, every class $x\in H^2(B;\Z/2)$ has a preimage $X\in H^2(B)$. Hence,
		\[\mathcal{P}_2(x)=\mathcal{P}_2(X\textrm{ mod } 2)=X^2\mod 4, \]
		which shows that the condition $\mathcal{P}_2(x)\equiv Y\mod 4$ is equivalent to $X^2\equiv Y\mod 4$ for one, and thus for all preimages $X\in H^2(B)$ of $x$.
	\end{proof}
	
	Now we consider the special case $B=B_{\overline{\gamma}}=\#_{i=1}^k \gamma_i \C P^2$ for $\overline{\gamma}=(\gamma_1,\dots,\gamma_k)\in\{\pm1\}^k$. We denote by $b_i\in H^2(\gamma_i\C P^2)$ a generator of the cohomology ring of the $i$-th summand of $B$ and let $c=[B]^*\in H^4(B)$. Then $b_i\smile b_j=\delta_{ij}\gamma_ic$ and we have $w_2(B)\equiv\sum_i b_i\mod 2$ and $p_1(B)=\sum_i3\gamma_ic$.
	\begin{corollary}
		\label{C:2-SPHERE_BUNDLES}
		Isomorphism classes of linear $S^2$-bundles over $B_{\overline{\gamma}}$ are in bijection with elements $(\alpha,\overline{\beta})\in\Z\times\{0,1\}^k$. The bijection assigns to $\alpha$ and $\overline{\beta}=(\beta_1,\dots,\beta_k)$ the sphere bundle of a vector bundle $\xi$ over $B_{\overline{\gamma}}$ of rank 3 with first Pontryagin class $p_1(\xi)=(4\alpha+\sum_i\gamma_i\beta_i)c$ and second Stiefel-Whitney class $w_2(\xi)=\sum_i\beta_i b_i\mod 2$.
	\end{corollary}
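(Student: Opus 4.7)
The plan is to deduce this from the preceding classification lemma, which parametrizes linear $S^2$-bundles over a closed simply-connected 4-manifold $B$ by pairs $(x,Y) \in H^2(B;\Z/2) \times H^4(B)$ satisfying $X^2 \equiv Y \bmod 4$ for any (equivalently, every) integral lift $X \in H^2(B)$ of $x$. So essentially all the work is to make this constraint explicit for $B=B_{\overline{\gamma}}$ and to enumerate the solutions.

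First I would record the relevant cohomology data. Since $B_{\overline{\gamma}} = \#_{i=1}^k \gamma_i\C P^2$, the group $H^2(B_{\overline{\gamma}})$ is free abelian of rank $k$ with basis $b_1,\dots,b_k$, the group $H^4(B_{\overline{\gamma}})$ is infinite cyclic generated by $c = [B_{\overline{\gamma}}]^*$, and the cup product is diagonal with $b_i \smile b_j = \delta_{ij}\gamma_i c$. In particular $H^2(B_{\overline{\gamma}};\Z/2) \cong (\Z/2)^k$, so every class $x \in H^2(B_{\overline{\gamma}};\Z/2)$ is uniquely written as $x = \sum_i \beta_i b_i \bmod 2$ with $\overline{\beta} = (\beta_1,\dots,\beta_k) \in \{0,1\}^k$, and every class $Y \in H^4(B_{\overline{\gamma}})$ has the form $Y = mc$ for a unique $m \in \Z$.

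Next I would compute $X^2$ for the canonical lift $X = \sum_i \beta_i b_i \in H^2(B_{\overline{\gamma}})$ of $x$. Using $\beta_i \in \{0,1\}$ one gets
\[
X^2 = \sum_{i,j} \beta_i\beta_j\, b_i \smile b_j = \sum_i \beta_i^2\, \gamma_i c = \Bigl(\sum_i \gamma_i \beta_i\Bigr) c,
\]
so the congruence $X^2 \equiv Y \bmod 4$ reads $m \equiv \sum_i \gamma_i\beta_i \bmod 4$, i.e.\ $m = 4\alpha + \sum_i \gamma_i\beta_i$ for a unique $\alpha \in \Z$. (A brief verification that the constraint does not depend on the chosen lift: any other lift differs by $2Z$, and $(X+2Z)^2 = X^2 + 4XZ + 4Z^2 \equiv X^2 \bmod 4$.)

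Combining these, the map $(\alpha,\overline{\beta}) \mapsto (x,Y) = \bigl(\sum_i \beta_i b_i \bmod 2,\, (4\alpha + \sum_i \gamma_i\beta_i)c\bigr)$ is a bijection from $\Z \times \{0,1\}^k$ onto the set of admissible pairs, and by the preceding lemma this is in bijection with isomorphism classes of linear $S^2$-bundles over $B_{\overline{\gamma}}$, with the identification $w_2(\xi) = x$ and $p_1(\xi) = Y$. There is no real obstacle here — it is a direct unpacking of the lemma — the only care needed is to work with the specific integral lift $X = \sum \beta_i b_i$ and to notice that $\beta_i^2 = \beta_i$ reduces the intersection computation to a linear expression in $\overline{\beta}$.
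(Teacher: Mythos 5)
Your proposal is correct and is exactly the intended derivation: the paper states Corollary \ref{C:2-SPHERE_BUNDLES} as a direct consequence of the Dold--Whitney lemma together with the recorded intersection form $b_i\smile b_j=\delta_{ij}\gamma_i c$, which is precisely the unpacking you carry out (including the observation $\beta_i^2=\beta_i$ and the lift-independence of the mod $4$ condition).
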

	\begin{definition}
		We denote the total space of the $S^2$-bundle we obtain as in Corollary \ref{C:2-SPHERE_BUNDLES} by $M_{\alpha,\overline{\beta},\overline{\gamma}}$.
	\end{definition}
	By Theorem \ref{T:2-SPHERE_BUNDLES}, all the manifolds $M_{\alpha,\overline{\beta},\overline{\gamma}}$ admit core metrics.\pagebreak
	
	\begin{proposition}
		\label{P:NEW}
		Let $M=\#_{i=1}^m (S^3\times S^3)\#_{i=1}^l M_{\alpha_i,\overline{\beta}_i,\overline{\gamma}_i}$. Suppose that
		\begin{itemize}
			\item $m,l\geq1$, and $\alpha_1\neq0$ or $k_1\geq1$, or
			\item $l\geq2$ and $k_1,k_2\geq1$.
		\end{itemize}
		Then $M$ is not diffeomorphic to the total space of a linear sphere bundle or a projective bundle as in item (1) of Subsection \ref{SS:KNOWN}, a homogeneous space, a cohomogeneity one manifold or a biquotient. In particular it is an example of a manifold with a metric of positive Ricci curvature that is not contained in the list of Section \ref{SS:KNOWN} except for finitely many possibilities that are contained in item (9).
	\end{proposition}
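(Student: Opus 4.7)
My plan is to distinguish $M$ from every manifold in the list of Section~\ref{SS:KNOWN} by computing and comparing standard topological invariants: the Betti numbers, the trilinear cup-product form $\mu_M$ on $H^2(M;\Z)$, and, where necessary, the characteristic classes.

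The first step is the computation of $b_2(M)$ and $b_3(M)$. By connected-sum additivity in the middle degrees and the Gysin sequence applied to each summand $M_{\alpha_i,\overline\beta_i,\overline\gamma_i}\to B_{\overline\gamma_i}$ (which splits because the Euler class lies in $H^3(B_{\overline\gamma_i})=0$), one obtains $b_2(M)=l+\sum_i k_i$ and $b_3(M)=2m$. The crucial observation is that every manifold in items~(1),~(3)--(8) of Section~\ref{SS:KNOWN} satisfies $b_3=0$: items~(1),~(3),~(4) are bundles whose base and fiber have vanishing odd-degree cohomology, so the Gysin or Leray--Serre sequence gives $b_3=0$; items~(5)--(8) are the homogeneous spaces, cohomogeneity-one manifolds and biquotients in the classifications by Gorbatsevitch, Hoelscher and DeVito, whose cohomology is concentrated in even degrees. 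The only item with $b_3\neq0$ is~(2) $S^3\times S^3$, which has $b_2=0$. Hence in the first case of the proposition ($m,l\geq1$ with $\alpha_1\neq0$ or $k_1\geq1$), the pair $b_2(M)\geq1$ and $b_3(M)\geq2$ rules out every entry of the list simultaneously.

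In the second case ($l\geq2$, $k_1,k_2\geq1$, possibly $m=0$), the third Betti number may vanish; however, $b_2(M)\geq4$ already eliminates $S^6$, $S^2\widetilde{\times}S^4$, $\widetilde{G}_2(\R^5)$, $\mathrm{SU}(3)/T^2$, $\mathrm{SU}(3)\bq T^2$ and every $\C P^2$-bundle over $S^2$ (each with $b_2\leq2$). The remaining candidates are the linear $S^2$-bundles over $\#_k(\pm\C P^2)$ or $\#_k(S^2\times S^2)$, and the finite list of sporadic biquotients of item~(8). To rule out the bundle case, I would compare the cubic form $F_N(x)=\mu_N(x,x,x)$ on $H^2(N;\R)$ for $N=M$ and for $N=E$. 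For any linear $S^2$-bundle $E\to B$ over a closed simply-connected $4$-manifold with $H^3(B)=0$, the Gysin splitting $H^2(E)=\pi^*H^2(B)\oplus\R t$ combined with $H^6(B)=0$ forces $F_E$ to vanish on the hyperplane $\pi^*H^2(B)$, so $F_E$ has $t$ as a linear factor. On the other hand, the connected-sum decomposition gives $F_M=\sum_i F_{M_{\alpha_i,\overline\beta_i,\overline\gamma_i}}$ with each summand supported on $H^2(M_i)$; when $k_1,k_2\geq1$, a direct computation using $F_{M_i}=t_i\cdot Q_i$ with the non-zero quadratic residue $3\sum_j\gamma_{ij}y_{ij}^2$ shows that $F_M$ admits no linear factor. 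Consequently $F_M\not\cong F_E$ as cubic forms, and $M$ is not diffeomorphic to any such bundle. The sporadic biquotients form an explicit finite collection whose invariants can be checked directly.

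The main obstacle will be the rigorous verification of the last step: one must show that no linear substitution $\sum_i\mu_i t_i=0$ can annihilate the residual cubic in the remaining variables $y_{ij}$. This rests on the observation that in each summand the quadratic residue $\sum_j\gamma_{ij}y_{ij}^2$ has rank $k_i\geq1$ (the $\gamma_{ij}=\pm1$ are non-zero), so two independent summands contribute mutually disjoint non-zero quadratic terms that cannot cancel. A secondary technical point is the case-by-case comparison against the finitely many sporadic biquotients, which is straightforward given the explicit classification in \cite{DV17}.
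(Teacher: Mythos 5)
Your first case contains a genuine gap: it is not true that $S^3\times S^3$ is the only entry of the list in Section \ref{SS:KNOWN} with non-vanishing third Betti number. Items (9) and (10), the connected sums $\#_k(S^2\times S^4)\#_l(S^3\times S^3)$ and $(S^2\widetilde{\times}S^4)\#_{k}(S^2\times S^4)\#_{k+2}(S^3\times S^3)$ of Sha--Yang and Corro--Galaz-Garc\'ia, include manifolds with $b_2\geq1$ \emph{and} $b_3\geq2$ --- exactly the pair of Betti numbers you compute for $M$ --- so the pair $(b_2,b_3)$ cannot rule out the whole list. The omission is visible in the fact that your first-case argument never uses the hypothesis ``$\alpha_1\neq0$ or $k_1\geq1$'': without that hypothesis the conclusion is false, since for $m=l=1$, $k_1=0$, $\alpha_1=0$ the manifold $M=(S^3\times S^3)\#(S^2\times S^4)$ is literally an instance of item (9). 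The missing step is the trilinear form: the manifolds in items (2), (9), (10) have identically vanishing $\mu$, whereas under the hypothesis Corollary \ref{C:M_ALPHA_BETA_GAMMA} gives $\mu_M(b_{1,1},b_{1,1},a_1)=\gamma_{1,1}\neq0$ if $k_1\geq1$, and $\mu_M(a_1,a_1,a_1)=\alpha_1\neq0$ if $k_1=0$; this is exactly how the paper settles the first case.

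Your second case, by contrast, is a workable alternative to the paper's argument: the paper restricts $\mu_N$ to the subspace $\{x\in H^2(N)\mid x\smile p_1(N)=0\}$, while you use that the cubic form of any linear $S^2$-bundle over a closed simply-connected $4$-manifold vanishes on the hyperplane $\pi^*H^2(B;\R)$ and hence has a linear factor; and a coefficient comparison (look at the monomials $t_iy_{i'j}^2$ and $t_1y_{1j}y_{2j'}$) does show that $F_M=\sum_i t_iQ_i$ has no linear factor once $k_1,k_2\geq1$, so the ``main obstacle'' you flag can be closed, and your criterion has the merit of treating all of item (1) uniformly without reference to $p_1$. Two bookkeeping corrections, though: items (9) and (10) also occur with $b_2\geq4$ and must be kept among the candidates in this case (they are eliminated because their cubic form vanishes identically while $F_M\neq0$), and the sporadic biquotients require no case-by-case check, since --- as recorded at the end of Section \ref{SS:KNOWN} --- only items (1), (9) and (10) have second Betti number greater than $3$, so $b_2(M)\geq4$ already excludes them.
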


	Note that the construction of the manifold $M$ in Proposition \ref{P:NEW} differs from that in item (1) as we take connected sums of sphere bundles, whereas the manifolds in item (1) are sphere bundles over manifolds obtained by connected sums. 

	\begin{remark}
		\label{R:NEW}
		It follows from Corollary \ref{C:M_ALPHA_BETA_GAMMA} below that $M_{\alpha,\overline{\beta},\overline{\gamma}}$ is spin if and only if $\beta_{j}=1$ for all $j$. In Proposition \ref{P:NEW} there is no restriction on the $\overline{\beta}_i$, hence we obtain infinitely many new examples both in the spin and non-spin case.
	\end{remark}

	To prove Proposition \ref{P:NEW} we need to determine the invariants of the manifolds $M_{\alpha,\overline{\beta},\overline{\gamma}}$, which will be carried out in the next section.
	
	\subsection{The Invariants of Linear $S^2$-Bundles over Closed, Simply-Connected 4-Manifolds}
	\label{SS:S2-BUNDLES}
	
	We will now determine the invariants of the total space of a linear $S^2$-bundle over a closed, simply-connected 4-manifold in order to prove Proposition~\ref{P:NEW}.
	
	Recall that for a linear $S^m$-bundle $\pi\colon E\to B$ we have the Gysin sequence
	\[\dots\xrightarrow{\cdot\smile e(\pi)} H^i(B)\xrightarrow{\pi^*}H^i(E)\xrightarrow{\psi}H^{i-m}(B)\xrightarrow{\cdot\smile e(\pi)}H^{i+1}(B)\xrightarrow{\pi^*}\dots, \]
	where $e(\pi)\in H^{m+1}(B)$ is the Euler class. The map $\psi\colon H^*(E)\to H^{*-m}(B)$ satisfies the following property.
	\begin{lemma}[{\cite[Lemma 1]{Ma58}}]
		\label{L:GYSIN_FIB_INT_MAP}
		For $x\in H^i(B)$ and $y\in H^j(E)$ we have
		\[\psi(\pi^*(x)\smile y)=(-1)^i x\smile \psi(y). \]
	\end{lemma}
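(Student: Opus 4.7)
The plan is to realize the Gysin map $\psi$ via the Thom isomorphism and reduce the claim to the standard compatibility between the cup product and the connecting homomorphism of a pair. Let $\overline{\pi}\colon\overline{E}\to B$ denote the associated disc bundle, so that $(\overline{E},E)$ is a pair with $\overline{\pi}$ extending $\pi$. The Thom class $u\in H^{m+1}(\overline{E},E)$ yields a Thom isomorphism $\Phi\colon H^k(B)\to H^{k+m+1}(\overline{E},E)$ given by $\Phi(\alpha)=\overline{\pi}^*(\alpha)\smile u$, and the Gysin map factors as $\psi=\Phi^{-1}\circ\delta$, where $\delta\colon H^j(E)\to H^{j+1}(\overline{E},E)$ is the connecting homomorphism in the long exact sequence of the pair.

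Next I would invoke the standard relation between $\delta$ and the cup product: for any $\alpha\in H^i(\overline{E})$ and $y\in H^j(E)$,
\[\delta\bigl(\alpha|_E\smile y\bigr)=(-1)^i\,\alpha\smile\delta(y),\]
where the product on the right is the relative cup product $H^i(\overline{E})\otimes H^{j+1}(\overline{E},E)\to H^{i+j+1}(\overline{E},E)$. This identity is a general cochain-level consequence of the Leibniz rule for the coboundary applied to a cocycle representative of $\alpha$ paired with a relative cochain representative of $y$, and requires no input specific to sphere bundles.

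To finish, I would apply this formula with $\alpha=\overline{\pi}^*(x)$, so that $\alpha|_E=\pi^*(x)$. Using associativity of the cup product and the definition of $\Phi$,
\[\delta\bigl(\pi^*(x)\smile y\bigr)=(-1)^i\,\overline{\pi}^*(x)\smile\delta(y)=(-1)^i\,\overline{\pi}^*(x)\smile\bigl(\overline{\pi}^*(\psi(y))\smile u\bigr)=(-1)^i\,\Phi\bigl(x\smile\psi(y)\bigr).\]
Applying $\Phi^{-1}$ to both sides and comparing with $\psi(\pi^*(x)\smile y)=\Phi^{-1}\delta(\pi^*(x)\smile y)$ yields the claimed formula. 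The only potential subtlety is ensuring that the sign conventions in the cup product and coboundary match those implicit in the Gysin sequence written in the paper; but since the Gysin sequence is derived from exactly this Thom isomorphism plus long exact sequence setup, the signs are automatically consistent, so this is not a genuine obstacle.
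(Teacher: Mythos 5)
Your argument is correct. Note that the paper does not prove this statement at all; it simply quotes it as Lemma 1 of Massey's paper [Ma58], so there is no internal proof to compare against. What you give is the standard self-contained derivation: realize $\psi$ as $\Phi^{-1}\circ\delta$ via the Thom isomorphism of the associated disc bundle pair $(\overline{E},E)$, use the derivation property $\delta(\alpha|_E\smile y)=(-1)^{i}\alpha\smile\delta(y)$ of the connecting homomorphism (which indeed follows from the cochain Leibniz rule applied to a cocycle representative of $\alpha$ and a cochain extension of a representative of $y$, whose coboundary is the relative cocycle representing $\delta(y)$), and then unwind the definitions using naturality and associativity of the cup product. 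The degree bookkeeping checks out ($H^{j-m}(B)\xrightarrow{\Phi}H^{j+1}(\overline{E},E)$), and the only hypothesis your proof needs beyond the paper's setup is the existence of an integral Thom class, i.e.\ orientability of the bundle; this is already implicit in the paper's statement of the Gysin sequence with $\Z$-coefficients and an integral Euler class, and is satisfied in all of the paper's applications (simply-connected bases), so it is not a gap. Your closing remark about sign conventions is also handled correctly: since $\psi$ is defined from exactly this Thom-isomorphism-plus-pair-sequence setup, the sign $(-1)^i$ you obtain is the one in the statement.
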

	Now assume that the Euler class $e(\pi)$ vanishes. Then the Gysin sequence splits up into short exact sequences of the form
	\begin{equation}
		\label{EQ:GYSIN_SPLIT}
		0\longrightarrow H^i(B)\xrightarrow{\pi^*}H^i(E)\xrightarrow{\psi} H^{i-m}(B)\longrightarrow0.
	\end{equation}
	From now on we assume that $B$ is connected. Then, following \cite[Section 8]{Ma58}, let $a\in H^m(E)$ such that $\psi(a)=1\in H^0(B)\cong \Z$. We define $\theta_a\colon H^*(B)\to H^{*+m}(E)$ by
	\[\theta_a(x)=(-1)^{i(m+1)}a\smile\pi^*(x) \]
	for $x\in H^i(B)$. Then, by Lemma \ref{L:GYSIN_FIB_INT_MAP}, the map $\theta_a$ defines a splitting of \eqref{EQ:GYSIN_SPLIT}, i.e.\ $\psi\circ\theta_a=\textrm{id}_{H^*(B)}$. In particular, we have
	\begin{equation}
		\label{EQ:COHOM_SPLITTING}
		H^i(E)=\pi^*(H^i(B))\oplus \theta_a(H^{i-m}(B)).
	\end{equation}
	
	Now we assume $m=2$, i.e.\ $\pi\colon E\to B$ is a linear $S^2$-bundle, and that $B$ is a closed simply-connected 4-manifold, so $E$ is a simply-connected closed 6-manifold. We choose an orientation on $B$ and orient $E$ such that $\psi([E]^*)=[B]^*$. We have that $H^3(B)\cong H_1(B)$ is trivial, hence $e(\pi)\in H^3(B)$ vanishes. We denote by $\xi$ the vector bundle corresponding to $\pi$.
	\begin{lemma}
		\label{L:COHOM_2_SPHERE_BDL}
		The manifold $E$ has torsion-free homology.	Let $W\in H^2(B)$ such that $W\equiv w_2(\xi)\mod 2$. Then there exists $a\in H^2(E)$ with $\psi(a)=1$ such that
		\[H^2(E)=\pi^*(H^2(B))\oplus \Z a \]
		and for $x_1,x_2,x_3\in H^2(B)$ we have
		\begin{alignat*}{2}
			&\mu_E(\pi^*x_1,\pi^*x_2,\pi^*x_3) &&= 0,\\
			&\mu_E(\pi^*x_1,\pi^*x_2,a) &&= \langle x_1\smile x_2,[B]\rangle,\\
			&\mu_E(\pi^*x_1,a,a) &&= \langle x_1\smile W,[B]\rangle,\\
			&\mu_E(a,a,a) &&= \frac{1}{4}\langle 3W^2+p_1(\xi),[B]\rangle.
		\end{alignat*}
		Further, we have $b_3(E)=0$ and for $x\in H^2(B)$ we have
		\begin{alignat*}{2}
			&w_2(E)&&=\pi^*w_2(\xi)+\pi^*w_2(B),\\
			&p_1(E)(\pi^*x)&&=0,\\
			&p_1(E)(a)&&=\langle p_1(\xi)+p_1(B),[B]\rangle.
		\end{alignat*}	
	\end{lemma}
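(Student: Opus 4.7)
The plan is to extract all invariants from two inputs: the splitting of the Gysin sequence and the geometric splitting $\pi^*\xi \cong \ell \oplus \underline{\R}$ coming from the tautological section of $\pi^*\xi$ over $E$. First, since $B$ is a closed simply-connected 4-manifold, $H^*(B)$ is torsion-free and $H^{\mathrm{odd}}(B) = 0$, so $e(\pi) \in H^3(B)$ vanishes, the Gysin sequence splits into the short exact sequences \eqref{EQ:GYSIN_SPLIT}, and the decomposition \eqref{EQ:COHOM_SPLITTING} applies. This immediately yields torsion-freeness of $H^*(E)$, the claim $b_3(E) = 0$, and the module description $H^2(E) = \pi^*H^2(B) \oplus \Z a$ for any $a$ with $\psi(a) = 1$. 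Combined with $\langle \alpha, [E]\rangle = \langle \psi(\alpha), [B]\rangle$ (which holds because the orientation on $E$ was chosen so that $\psi[E]^* = [B]^*$) and Lemma~\ref{L:GYSIN_FIB_INT_MAP}, the first two trilinear-form identities follow at once from $H^6(B) = 0$ and from applying $\psi$ to $\pi^*(x_1 x_2) \cdot a$.

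The heart of the argument, which I expect to be the main obstacle, is pinning down $a^2 \in H^4(E)$ as an integral relation with the correct sign. Geometrically, each point of $E$ is a unit vector in a fiber of $\xi$, so the orthogonal complement of the tautological section gives an oriented rank-two subbundle $\ell \subset \pi^*\xi$ with $\pi^*\xi \cong \ell \oplus \underline{\R}$, and $\ell$ restricts to $TS^2$ on each fiber. I would use $\psi(e(\ell)) = \chi(S^2) = 2$ to write $e(\ell) = 2a_0 + \pi^*C_0$ for some $a_0$ with $\psi(a_0) = 1$ and $C_0 \in H^2(B)$; reducing mod~$2$ and using $w_2(\ell) = \pi^*w_2(\xi)$ shows that $C_0$ is an integer lift of $w_2(\xi)$. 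The identities $p_1(\ell) = e(\ell)^2$ (rank two, oriented) and $p_1(\ell) = \pi^*p_1(\xi)$ then give
\[(2a_0 + \pi^*C_0)^2 = \pi^*p_1(\xi).\]
Decomposing this equality according to $H^4(E) = \pi^*H^4(B) \oplus a_0\pi^*H^2(B)$ and using torsion-freeness of $H^2(B)$, the vanishing of the $a_0$-component on the right simultaneously forces $\psi(a_0^2) = -C_0$ and exhibits $(p_1(\xi) - C_0^2)/4$ as an integer class in $H^4(B)$; thus the Wu-type congruence $p_1(\xi) \equiv C_0^2 \pmod 4$ drops out of the calculation rather than needing to be imposed. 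For the prescribed lift $W$ of $w_2(\xi)$, the element $(W + C_0)/2$ is integral, and replacing $a_0$ by $a := a_0 + \pi^*((W + C_0)/2)$ yields $\psi(a) = 1$ together with
\[a^2 = \pi^*W \cdot a + \pi^*K, \qquad K = \frac{p_1(\xi) - W^2}{4}.\]

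With this identity in hand the remaining formulas are direct. The third and fourth trilinear-form entries follow by expanding $\pi^*x_1 \cdot a^2$ and $a^3 = \pi^*(W^2 + K) \cdot a + \pi^*(WK)$ and applying $\psi$ together with Lemma~\ref{L:GYSIN_FIB_INT_MAP}. For the characteristic classes, the vertical tangent bundle of $E$ is precisely $\ell$, so $TE \cong \pi^*TB \oplus \ell$; the Whitney sum formula gives $w_2(E) = \pi^*(w_2(B) + w_2(\xi))$ and $p_1(E) = \pi^*(p_1(B) + p_1(\xi))$, from which the pairing formulas on $\pi^*x$ and on $a$ follow by the same $\psi$-argument.
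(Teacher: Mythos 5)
Your proposal is correct, and while its skeleton (splitting of the Gysin sequence, fiber integration via $\psi$ and the orientation convention $\psi([E]^*)=[B]^*$, Whitney sum formula for the characteristic classes) matches the paper, the crucial middle step is handled by a genuinely different argument. Where the paper simply quotes Massey \cite[(8.2), Theorems III and IV]{Ma58} to produce a generator $a$ with $a^2=a\smile\pi^*W+\tfrac14\pi^*(p_1(\xi)-W^2)$, you derive this relation self-containedly from the tautological splitting $\pi^*\xi\cong\ell\oplus\underline{\R}_E$: using $\psi(e(\ell))=\chi(S^2)=2$, $w_2(\ell)=\pi^*w_2(\xi)$ and $p_1(\ell)=e(\ell)^2$ you pin down $a_0^2$, obtain the congruence $p_1(\xi)\equiv W^2\pmod 4$ as a by-product (the paper implicitly imports it from Massey), and then adjust $a_0$ by $\pi^*((W+C_0)/2)$ to adapt the generator to the prescribed lift $W$; I checked the resulting algebra and it reproduces the paper's formula for $a^2$, hence the values of $\mu_E$ exactly. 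Likewise, for the tangent bundle you use $TE\cong\pi^*TB\oplus\ell$ (identifying $\ell$ with the vertical tangent bundle), while the paper stabilizes through the disc bundle, $TE\oplus\underline{\R}\cong\pi^*TB\oplus\pi^*\xi$; these are equivalent and give the same $w_2(E)$ and $p_1(E)$ (in both cases one should note, as the paper does, that $H^4(E)$ is torsion-free when splitting $p_1$ of a Whitney sum). The only points you leave tacit are harmless: choosing the orientation of $\ell$ so that $\psi(e(\ell))=+2$ rather than $-2$, and the injectivity of $\pi^*$ on $H^2(\,\cdot\,;\Z/2)$ (immediate since $H^{-1}(B;\Z/2)=0$ in the mod-$2$ Gysin sequence) when concluding $C_0\equiv w_2(\xi)\bmod 2$. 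The trade-off: the paper's route is shorter because it outsources the key computation to \cite{Ma58}; yours is longer but self-contained and makes transparent why the Dold--Whitney-type congruence $p_1(\xi)\equiv W^2\pmod 4$ holds.
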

	\begin{proof}
		Since $B$ is simply-connected, the group $H^2(B)$ is torsion-free. Hence, by \eqref{EQ:COHOM_SPLITTING}, the manifold $E$ has torsion-free cohomology, and thus, by Poincar\'e duality, also torsion-free homology. The splitting $H^2(E)=\pi^*(H^2(B))\oplus \Z a$ also follows from \eqref{EQ:COHOM_SPLITTING} and holds for any $a\in H^2(E)$ with $\psi(a)=1$.
		
		For $x_1,x_2,x_3\in H^2(B)$ we have
		\[\pi^*x_1\smile \pi^* x_2\smile \pi^* x_3=\pi^*(x_1\smile x_2\smile x_3)=0. \]
		For the remaining cup products first note that
		\begin{align*}
			[B]^*\frown\pi_*(a\frown [E])=\pi_*((\pi^*([B]^*)\smile a)\frown [E])&=\pi_* (\theta_a([B]^*)\frown [E])\\
			&=\pi_*([E]^*\frown[E])=1,
		\end{align*}
		hence $\pi_*(a\frown[E])=[B]$. It follows that, for any $y\in H^4(B)$, we have
		\begin{align*}
			\pi_*((\pi^*y\smile a)\frown[E])=\pi_*(\pi^*(y)\frown(a\frown[E]))=y\frown\pi_*(a\frown[E])=y\frown[B].
		\end{align*}
		In particular,
		\begin{align*}
			\langle \pi^*x_1\smile\pi^*x_2\smile a,[E]\rangle=\langle x_1\smile x_2,[B]\rangle.
		\end{align*}
		For the remaining cup products we need to determine $a^2$. By \eqref{EQ:COHOM_SPLITTING} there are unique $\alpha\in H^4(B)$, $\beta\in H^2(B)$ such that
		\[a^2=\pi^*\alpha+a\smile\pi^*\beta. \]
		By \cite[(8.2) and Theorem III]{Ma58}, we can choose $a$ so that $\beta=W$ and by \cite[Theorem IV]{Ma58} we have
		\[p_1(\xi)=4\alpha+\beta^2=4\alpha+W^2. \]
		It follows that
		\[a^2=\frac{1}{4}\pi^*(p_1(\xi)-W^2)+a\smile \pi^*W, \]
		so
		\[\pi^*x_1\smile a\smile a=a\smile\pi^*(x_1\smile W) \]
		and hence
		\[\mu_E(\pi^*x_1,a,a)=\langle x_1\smile W,[B]\rangle. \]
		Further,
		\begin{align*}
			a^3&=\frac{1}{4}a\smile\pi^*(p_1(\xi)-W^2)+a^2\smile \pi^*W\\
			&=\frac{1}{4}a\smile\pi^*(p_1(\xi)-W^2)+a\smile \pi^*(W^2)\\
			&=\frac{1}{4}a\smile\pi^*(3W^2+p_1(\xi)),
		\end{align*}
		so
		\[\mu_E(a,a,a)=\frac{1}{4}\langle 3W^2+p_1(\xi),[B]\rangle. \]
		
		To show the remaining claims, denote by $\overline{E}$ the total space of the disc bundle corresponding to $E$. Then $T\overline{E}\cong TE\oplus\underline{\R}_E$, the trivial factor corresponds to the normal vector of $E=\partial\overline{E}$. Further, $T\overline{E}\cong \pi^*(TB)\oplus\pi^*\xi$. It follows that
		\[w_2(E)=\pi^*w_2(\xi)+\pi^*w_2(B) \]
		and, since $H^4(E)$ is torsion-free,
		\[p_1(E)=\pi^*p_1(\xi)+\pi^*p_1(B), \]
		from which the claims on $p_1(E)$ directly follow.
	\end{proof}

	We denote the pullback along the bundle projection of $b_i$ in $H^*(M_{\alpha,\overline{\beta},\overline{\gamma}})$ again by $b_i$. As a consequence of Lemma \ref{L:COHOM_2_SPHERE_BDL} with $W=\sum_i\beta_i b_i$ we obtain the corollary below. 
	\begin{corollary}
		\label{C:M_ALPHA_BETA_GAMMA}
		The manifold $M=M_{\alpha,\overline{\beta},\overline{\gamma}}$ is a simply-connected 6-manifold with torsion-free homology and $b_3(M)=0$. Further, we have
		\begin{alignat*}{2}
			&\mu_M(b_i,b_j,b_m) &&= 0\\
			&\mu_M(b_i,b_j,a) &&= \delta_{ij}\gamma_i,\\
			&\mu_M(b_i,a,a) &&= \beta_i\gamma_i,\\
			&\mu_M(a,a,a) &&=\alpha+\sum_i\beta_i\gamma_i,
		\end{alignat*}
		and
		\begin{alignat*}{2}
			&w_2(M)&&=\sum_i(1-\beta_i)b_i\mod 2,\\
			&p_1(M)(b_i)&&=0,\\
			&p_1(M)(a)&&=4\alpha+\sum_i(3+\beta_i)\gamma_i.
		\end{alignat*}
	\end{corollary}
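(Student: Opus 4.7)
My plan is to prove the corollary as a direct specialization of Lemma \ref{L:COHOM_2_SPHERE_BDL} to the base manifold $B=B_{\overline{\gamma}}$ and the bundle data $(\alpha,\overline{\beta},\overline{\gamma})$ encoded in Corollary \ref{C:2-SPHERE_BUNDLES}. First I would record the relevant data of the base: the ring $H^*(B_{\overline{\gamma}})$ is generated by the classes $b_i$ with intersections $b_i\smile b_j=\delta_{ij}\gamma_i c$, and the characteristic classes are $w_2(B_{\overline{\gamma}})\equiv\sum_i b_i\bmod 2$ and $p_1(B_{\overline{\gamma}})=\sum_i 3\gamma_i c$. For the bundle $\xi$ underlying $M_{\alpha,\overline{\beta},\overline{\gamma}}$, Corollary \ref{C:2-SPHERE_BUNDLES} gives $w_2(\xi)\equiv\sum_i\beta_i b_i\bmod 2$ and $p_1(\xi)=(4\alpha+\sum_i\gamma_i\beta_i)c$. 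Since $H^2(B_{\overline{\gamma}})$ is free, I can lift $w_2(\xi)$ to the integral class $W=\sum_i\beta_i b_i\in H^2(B_{\overline{\gamma}})$, which is the input required by Lemma \ref{L:COHOM_2_SPHERE_BDL}.

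Next I would plug $W$ and the Pontryagin/Stiefel--Whitney data into the formulas of Lemma \ref{L:COHOM_2_SPHERE_BDL} to obtain each of the claimed invariants. The simply-connectedness and $b_3(M)=0$, together with torsion-freeness of the homology, are immediate consequences of the lemma. The products $\mu_M(b_i,b_j,b_m)=0$ and $\mu_M(b_i,b_j,a)=\delta_{ij}\gamma_i$ follow directly from the first two formulas of the lemma combined with the cup product structure of $B_{\overline{\gamma}}$. For $\mu_M(b_i,a,a)$ I would compute $b_i\smile W=\beta_i\gamma_i c$, which gives the claimed value upon evaluation against $[B_{\overline{\gamma}}]$.

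The only mildly nontrivial computation is $\mu_M(a,a,a)$, which I would carry out by first noting that $\beta_i^2=\beta_i$ since $\beta_i\in\{0,1\}$, so that
\[W^2=\Bigl(\sum_i\beta_i b_i\Bigr)^{\!2}=\sum_i\beta_i\gamma_i\, c.\]
Then
\[3W^2+p_1(\xi)=3\sum_i\beta_i\gamma_i\, c+\Bigl(4\alpha+\sum_i\gamma_i\beta_i\Bigr)c=\Bigl(4\alpha+4\sum_i\beta_i\gamma_i\Bigr)c,\]
and dividing by $4$ gives the claim. For the characteristic classes, adding $w_2(\xi)$ and $w_2(B_{\overline{\gamma}})$ yields $\sum_i(1+\beta_i)b_i\equiv\sum_i(1-\beta_i)b_i\bmod 2$, while $p_1(M)(a)=\langle p_1(\xi)+p_1(B_{\overline{\gamma}}),[B_{\overline{\gamma}}]\rangle=4\alpha+\sum_i\gamma_i\beta_i+\sum_i 3\gamma_i=4\alpha+\sum_i(3+\beta_i)\gamma_i$, as required, and $p_1(M)(b_i)=0$ is immediate.

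There is no real obstacle here; the only subtlety worth flagging is that the lift $W$ of $w_2(\xi)$ is not unique, but Lemma \ref{L:COHOM_2_SPHERE_BDL} holds for any such $W$, so the specific choice $W=\sum_i\beta_i b_i$ is legitimate. The arithmetic reduction $\beta_i^2=\beta_i$ is what makes the final expression $\alpha+\sum_i\beta_i\gamma_i$ come out cleanly.
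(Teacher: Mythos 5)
Your proposal is correct and follows exactly the paper's route: the corollary is obtained by applying Lemma \ref{L:COHOM_2_SPHERE_BDL} with the integral lift $W=\sum_i\beta_i b_i$ of $w_2(\xi)$ and the bundle data from Corollary \ref{C:2-SPHERE_BUNDLES}, and your arithmetic (in particular $W^2=\sum_i\beta_i\gamma_i c$ via $\beta_i^2=\beta_i$, giving $\mu_M(a,a,a)=\alpha+\sum_i\beta_i\gamma_i$) matches the intended computation. The paper states this specialization without writing out the details, so your verification is simply a spelled-out version of the same argument.
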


	We are now ready to prove Proposition \ref{P:NEW}.
	
	\begin{proof}[Proof of Proposition \ref{P:NEW}]
		First suppose that $m,l\geq1$. Besides $S^3\times S^3$, the only manifolds in the list of Section \ref{SS:KNOWN} with non-trivial third Betti number are those in items (2), (9), (10) and (11). The manifolds in items (2), (10) and (11) have trivial trilinear form $\mu$. If $\alpha_1\neq0$ or $k_1\geq1$, then, by Corollary \ref{C:M_ALPHA_BETA_GAMMA}, the manifold $M$ has non-trivial trilinear form $\mu_M$, so it can only be diffeomorphic to the manifolds in item (9).

		Now suppose $l\geq2$ and $k_1,k_2\geq1$. Then $b_2(M)\geq4$, so $M$ can only be diffeomorphic to the manifolds in items (1), (9), (10) and (11). Every manifold $N$ in item (1) with $p_1(N)\neq0$ has the property that its trilinear form is trivial on the subspace $U=\{x\in H^2(N)\mid x\smile p_1(N)=0\}$ by Lemma \ref{L:COHOM_2_SPHERE_BDL}. In particular, after taking tensor product with $\Q$ and extending the trilinear form $\Q$-linearly, the trilinear form is trivial on a subspace of codimension 1. Since the manifolds in items (10) and (11) have trivial trilinear form, they also have this property. We conclude the proof by showing that $M$ does not have this property.
		
		Recall that $M=\#_{i=1}^m (S^3\times S^3)\#_{i=1}^l M_{\alpha_i,\overline{\beta}_i,\overline{\gamma}_i}$ and that for each $i$ we have a class $a_i\in H^2(M_{\alpha_i,\overline{\beta}_i,\overline{\gamma}_i})$ as in Lemma \ref{L:COHOM_2_SPHERE_BDL}. Let $U\subseteq H^2(M)\otimes \Q$ be a subspace of codimension $1$. The subspace $W\subseteq H^2(M)\otimes \Q$ generated by $\{a_1,b_{1,1},a_2,b_{2,1}\}$ has dimension 4, hence its intersection with $U$ has dimension 3 or 4. In the latter case we have in particular that $a_1,b_{1,1}\in U$ and $\mu_M(b_{1,1},b_{1,1},a_1)=\gamma_{1,1}\neq0$. In the first case, where $U\cap W$ has dimension 3, we can express a basis for $U\cap W$ via Gauss elimination as linear combinations of the elements $\{a_1,b_{1,1},a_2,b_{2,1}\}$ by multiplying one of the following matrices with $(a_1,b_{1,1},a_2,b_{2,1})^\intercal$:
		\[\begin{pmatrix}
			1 & 0 & 0 & \lambda_1\\
			0 & 1 & 0 & \lambda_2\\
			0 & 0 & 1 & \lambda_3
			\end{pmatrix},\quad \begin{pmatrix}
			1 & 0 & \lambda_1 & 0\\
			0 & 1 & \lambda_2 & 0\\
			0 & 0 & 0 & 1
			\end{pmatrix},\quad \begin{pmatrix}
			1 & \lambda_1 & 0 & 0\\
			0 & 0 & 1 & 0\\
			0 & 0 & 0 & 1
			\end{pmatrix},\quad \begin{pmatrix}
			0 & 1 & 0 & 0\\
			0 & 0 & 1 & 0\\
			0 & 0 & 0 & 1
		\end{pmatrix}, 
		\]
		where $\lambda_1,\lambda_2,\lambda_3\in\Q$. Using Corollary \ref{C:M_ALPHA_BETA_GAMMA} one now easily sees that in each case the trilinear form $\mu_M$ is non-trivial on $U$.
	\end{proof}
	
	\bibliographystyle{plainurl}
	\bibliography{References}

\end{document}